\documentclass{article}

\usepackage[a4paper, total={7in, 8in}]{geometry}
\usepackage[utf8]{inputenc}   
\usepackage[T1]{fontenc}      
\usepackage{amsmath,amsthm} 
\usepackage{amssymb,mathrsfs} 
\usepackage{amssymb}
\usepackage{amsfonts}
\usepackage{graphicx}
\usepackage{enumerate}
\usepackage{graphicx} 
\usepackage{lmodern} 
\usepackage{mathtools}
\usepackage{algorithm}
\usepackage{algorithmicx,algpseudocode}
\usepackage[colorlinks]{hyperref}
\usepackage{tikz}
\usetikzlibrary{intersections}

\newtheorem{thm}{Theorem}[section]%
\newtheorem{prop}[thm]{Proposition}%
\newtheorem{lem}[thm]{Lemma}%
\newtheorem{defi}[thm]{Definition}%
%
\newtheorem{rem}{Remark}%

\newtheorem{algo}{Algorithm}%
\makeatletter \renewcommand{\ALG@name}{Numerical algorithm} \makeatother

\newcommand{\R}{{\mathbb R}}


 %





\newcommand{\eps}{\varepsilon}
\newcommand{\dt}{{\Delta t}}
\newcommand{\rme}{\mathrm{e}}
\newcommand{\dps}{\displaystyle}

\newcommand{\cD}{\mathcal{D}}

\renewcommand{\leq}{\leqslant}
\renewcommand{\le}{\leqslant}
\renewcommand{\geq}{\geqslant}
\renewcommand{\ge}{\geqslant}

\title{Hybrid Monte Carlo methods for sampling probability measures on submanifolds}

\author{Tony Lelièvre$^1$, Mathias Rousset$^2$ and Gabriel Stoltz$^1$ \\
\small $1:$ Université Paris-Est, CERMICS (ENPC), Inria, F-77455 Marne-la-Vallée, France \\
\small $2:$ SIMSART team-project, Inria Rennes, France}

\date{\today}

\begin{document}

\maketitle

\begin{abstract}
  Probability measures supported on submanifolds can be sampled by adding an extra momentum variable to the state of the system, and discretizing the associated Hamiltonian dynamics with some stochastic perturbation in the extra variable. In order to avoid biases in the invariant probability measures sampled by discretizations of these stochastically perturbed Hamiltonian dynamics, a Metropolis rejection procedure can be considered. The so-obtained scheme belongs to the class of generalized Hybrid Monte Carlo (GHMC) algorithms. We show here how to generalize to GHMC a procedure suggested by Goodman, Holmes-Cerfon and Zappa for Metropolis random walks on submanifolds, where a reverse projection check is performed to enforce the reversibility of the algorithm for any timesteps and hence avoid biases in the invariant measure. We also provide a full mathematical analysis of such procedures, as well as numerical experiments demonstrating the importance of the reverse projection check on simple toy examples.
\end{abstract}

\section{Introduction and motivation}

Various applications require sampling probability measures restricted to submanifolds, for instance in molecular dynamics and computational statistics. In molecular dynamics, the typical setting corresponds to molecular systems whose configurations are distributed according to the Boltzmann--Gibbs measure, with so-called molecular constraints such as fixed bond lengths or fixed bending angles in molecules and/or fixed values of the so-called reaction coordinate function for the computation of free energy differences using thermodynamic integration. We refer {\it e.g.} to~\cite[Chapter~10]{Rap} and~\cite{Dar07,lelievre-rousset-stoltz-book-10} for applications to the computation of free energy differences, and \cite{arnold-89,leimkuhler-reich-04} for mathematical textbooks dealing with constrained Hamiltonian dynamics. In computational statistics, one prominent example is provided by Approximate Bayesian Computations~\cite{TBGD97} (see also the review article~\cite{MPRR12}), where the value of the so-called summary statistics would be exactly fixed. The motivation for sampling measures restricted to a submanifold, rather than penalizing deviations from the target value of the level set function defining the submanifold, is numerical efficiency. Other applications of sampling measures on manifolds in statistics are given in~\cite{pmlr-v22-brubaker12,DHS13}.

Sampling probability measures restricted to submanifolds is not as straightforward as sampling probability measures on the full configuration space. Dedicated methods can be used for submanifolds of a certain type, for instance algebraic manifolds defined as the zeros of polynomial functions~\cite{BM18}. We consider here submanifolds defined as level sets of a generic nonlinear function of the coordinates of the system. Although it is possible to sample probability measures restricted to such submanifolds using diffusion processes, the discretization of these processes with a timestep $\Delta t > 0$ induces Markov chains whose invariant measure departs from the target measure, with errors of order~$\Delta t^\alpha$, where $\alpha >0$ depends on the weak order of the time-discretization scheme (see for instance~\cite{faou-lelievre-09}). One way to remove the bias in the discretization of stochastic differential equations, and possibly to stabilize the numerical scheme in order to allow for larger timesteps, is to rely on a Metropolis--Hastings procedure~\cite{MRRTT53,Hastings70}, considering the outcome of the numerical discretization as a proposal to be accepted or rejected. This requires however to evaluate the Metropolis ratio, in particular the probability to come back to the initial configuration starting from the one proposed by the numerical scheme. For probability measures defined on submanifolds, the proposition kernel is however typically not explicit, since the proposal move usually consists of an unconstrained step followed by a projection procedure to the submanifold of interest. The latter part of the proposed move prevents writing down a simple and analytical formula for the likelihood of a given proposal move. A more convenient strategy is to look for proposal moves which are symmetric, so that the Metropolis ratio reduces to the ratio of the probabilities of the initial and proposed states. Hybrid Monte Carlo\footnote{This scheme is now named ``Hamiltonian Monte Carlo'' in the computational statistics literature.} (HMC) provides such proposals, which are obtained from the composition of one step of integrators of constrained Hamiltonian dynamics, which are reversible up to momentum reversal, such as RATTLE~\cite{andersen-83} (this property is proved for sufficiently small timesteps, for example in the monographs~\cite{hairer-lubich-wanner-06,leimkuhler-reich-04}). It is in fact possible to resort to generalized HMC (GHMC)~\cite{horowitz-91} and other variants of HMC based on partial resampling of momenta (see~\cite{lelievre-rousset-stoltz-12} and~\cite[Section~3.3.5.4]{lelievre-rousset-stoltz-book-10}), geodesic integrators~\cite{LM16}, the use of non-quadratic kinetic energies~\cite{ST18}, etc.  

Goodman, Holmes-Cerfon and Zappa recently showed, by a clever geometric construction, how to construct reversible Metropolis random walks on submanifolds~\cite{zappa-holmes-cerfon-goodman-17}. In fact, their construction can be interpreted as a one-step HMC scheme where the gradient of the potential energy in the proposal move is zero. Moreover, they also carefully discuss that, in addition to performing some Metropolis correction based on a probability ratio, it should be checked that the proposal mechanism is actually reversible, in the sense that it is possible to come back to the initial state starting from the proposed one. This additional check is usually not performed in the algorithms used in molecular dynamics since it is implicitly assumed that timesteps are sufficiently small for the integrators of the constrained Hamiltonian dynamics to be reversible up to momentum reversal. When the timesteps are not sufficiently small, performing this additional reverse check is however necessary to obtain unbiased samples, as already demonstrated in~\cite{zappa-holmes-cerfon-goodman-17}.

The aim of our work is first to build GHMC-like algorithms with reverse check to sample without bias probability distributions on submanifolds. In particular, we extend the algorithm proposed in~\cite{zappa-holmes-cerfon-goodman-17} by allowing for non-zero gradient forces in the proposal move. For example, we construct an unbiased Metropolis-adjusted Langevin algorithm (MALA)~\cite{RDF78,RT96} to sample without bias probability distributions on submanifolds. Second, we provide a complete mathematical analysis of the need and impact of the reversibility check in GHMC-like schemes. The main difference with the approaches in~\cite{lelievre-rousset-stoltz-book-10,lelievre-rousset-stoltz-12} is that the algorithm samples the full constrained phase-space measure associated with the target measure on the submanifold, and not the phase-space measure restricted to configurations where the projection step in the RATTLE scheme is possible. In essence, the algorithm consists in rejecting moves for which the projection cannot be enforced or RATTLE is not reversible up to momentum reversal. See Remarks~\ref{rmk:restricted_vs_constrained} and~\ref{rem:truncated_momenta} for a more precise discussion on the difference between these approaches. 

The reversibility check turns out to be crucial for numerical efficiency, at least for the numerical simulations reported in this work: the timesteps for which the numerical method is the most efficient lead to situations where a substantial fraction of the proposed moves violates the reversibility property of the underlying RATTLE scheme. Not rejecting them properly leads to large biases.

\medskip

This article is organized as follows. We present a mathematical framework for our analysis in Section~\ref{sec:mathematical_setting}. The key ingredient is to understand the reversibility properties arising from the Metropolization of the Hamiltonian dynamics, see Section~\ref{sec:RATTLE}. We illustrate our analysis on a simple numerical example in Section~\ref{sec:numerics}, where we also summarize the complete algorithm in pseudo-code in Section~\ref{sec:summary_algo}.

\section{Mathematical setting and results}
\label{sec:mathematical_setting}

We first describe in Section~\ref{sec:geometry} the target probability measures we are interested in sampling. The core of our analysis is presented in Section~\ref{sec:RATTLE}, where we show how to rigorously formalize the reversibility properties of discretizations of the Hamiltonian dynamics on a submanifold for possibly large timesteps. One crucial ingredient in this analysis is the Lagrange multiplier function, which associates to a state on the submanifold and another state close to the submanifold (obtained by an unconstrained step of the dynamics) a new state on the submanifold. Examples of such Lagrange multiplier functions are provided in Section~\ref{sec:example}. Once the discretization of the Hamiltonian part of the dynamics is clear, GHMC schemes follow in a straightforward way; see Section~\ref{sec:GHMC_schemes}.

\subsection{Geometric setting}
\label{sec:geometry}

Let us first introduce the measures we are interested in sampling, namely the target measure~\eqref{eq:nu} below, as well as the probability measure~\eqref{eq:mu} on an extended space, which admits~\eqref{eq:nu} as a marginal. We only provide the most essential objects needed for our analysis. For more details and motivations for the definitions and results given here, we refer the reader to the standard reference textbooks~\cite{abraham-marsden-78,arnold-89}, and also to~\cite[Chapter 3.3.2]{lelievre-rousset-stoltz-book-10} for a self-contained presentation.

\subsubsection{Target measure}

We denote by~$d$ the dimension of the ambient space, and consider a submanifold $\mathcal{M} \subset \mathbb{R}^d$ defined as the zero level set of a given smooth function $\xi:\R^d \to \R^m$ (with $m<d$):
\[
\mathcal{M} = \Big\{q \in \R^d, \, \xi(q) = 0 \Big\}.
\]
For example, the function~$\xi$ encodes molecular constraints or reaction coordinates in molecular dynamics, or is a summary statistics in Approximate Bayesian Computations. Let $M \in \R^{d \times d}$ be a fixed symmetric positive definite matrix, which is interpreted as the mass tensor below. The ambient space $\R^d$ is endowed with the scalar product $\langle q, \tilde q \rangle_M=q^T M \tilde q$. One could take for simplicity $M=\mathrm{Id}$ but it is sometimes useful to consider non identity mass matrices for numerical purposes~\cite{girolami-calderhead-11}. The function $\xi$ is assumed to be smooth on a neighborhood of ${\mathcal M}$ in~$\R^d$ and such that 
\begin{equation}
  \label{eq:GM}
  G_M(q) = \left[\nabla \xi(q)\right]^T M^{-1} \nabla \xi(q) \in \mathbb{R}^{m \times m}
\end{equation} 
is an invertible matrix for all $q$ in this neighborhood. In the latter expression, $\nabla \xi \in \mathbb{R}^{d \times m}$ is a rectangular matrix whose columns $\nabla \xi_i$ are the gradients of the components of $\xi=(\xi_1,\dots,\xi_m)$. The submanifold $\mathcal M$ of $\R^d$ is thus a smooth submanifold of co-dimension $m$. In the following, $\sigma^M_{\mathcal M}(dq)$ denotes the Riemannian measure on ${\mathcal M}$ induced by the scalar product $\langle \cdot, \cdot \rangle_M$ defined in the ambient space $\R^d$. Finally, let us introduce a smooth function $V: \R^d \to \R$. The aim of the algorithms presented below is to sample the
probability measure
\begin{equation}
  \label{eq:nu}
  \nu(dq)=Z_\nu^{-1} \, \rme^{- V(q)} \, \sigma^M_{\mathcal M}(dq), \qquad Z_\nu = \int_{\mathcal M}  \rme^{-V(q)} \, \sigma^M_{\mathcal M}(dq),
\end{equation}
where $Z_\nu$ is assumed to be finite.

\subsubsection{Extended measure in phase-space}

In order to construct numerical methods based on a Metropolis--Hastings procedure to sample~\eqref{eq:nu}, it is convenient to introduce algorithms in a higher dimensional space. The configuration of the system is now described by $(q,p)$, where, for a given position $q \in {\mathcal M}$, momenta $p$ belong to~$T^*_q {\mathcal M}$, the cotangent space to~$\mathcal{M}$ at~$q$. This cotangent space can be identified with a linear subspace of~$\R^d$:
\[
T^*_q \mathcal M = \Big\{ p \in \R^d, \, \left[\nabla \xi(q)\right]^T M^{-1} p = 0 \Big\} \subset \R^d.
\]
Let us denote by
\[
T^* \mathcal M = \Big\{ (q,p) \in \mathbb{R}^{d} \times \mathbb{R}^{d}, \, \xi(q)=0 \text{ and } \left[\nabla \xi(q)\right]^T M^{-1} p = 0 \Big\} \subset \R^d \times \R^d
\]
the associated cotangent bundle, which can be seen as a submanifold of $\R^d \times \R^d$. The phase space Liouville measure on $T^* \mathcal M$ is denoted by $\sigma_{T^* \mathcal M }(dq \, dp)$. It can be written in tensorial form:
\begin{equation}
  \label{eq:tensor}
  \sigma_{T^*\mathcal M }(dq \, dp)= \sigma^{M}_{\mathcal M}(dq) \,\sigma^{M^{-1}}_{T^*_q \mathcal M}(dp),
\end{equation}
where $\sigma^{M^{-1}}_{T^*_q \mathcal M}(dp)$ denotes the Lebesgue measure on $T^*_q \mathcal M$ induced by the scalar product $\langle p,\tilde p \rangle_{M^{-1}}=p^T M^{-1} \tilde p$ in $\R^d$. The measure $\sigma_{T^* \mathcal M }(dq \, dp)$ does not depend on the choice of the mass tensor~$M$ (see for example~\cite[Proposition~3.40]{lelievre-rousset-stoltz-book-10}).

In order to relate~\eqref{eq:tensor} with the target measure~\eqref{eq:nu}, we introduce the Hamiltonian function
\[
H(q,p)=V(q) + \frac{p^T M^{-1} p}{2}.
\]
The phase-space measure we would like to sample is
\begin{equation}
  \label{eq:mu}
  \mu(dq \,dp) = Z_\mu^{-1}\rme^{-H(q,p)} \, \sigma_{T^*\mathcal M }(dq \, dp),
  \qquad
  Z_\mu=\int_{T^*\mathcal M} \rme^{-H(q,p)} \, \sigma_{T^*\mathcal M }(dq \, dp),
\end{equation}
where $Z_\mu$ is finite when $Z_\nu$ is. Note that, thanks to the tensorization property~\eqref{eq:tensor} and the separability of the Hamiltonian function,
\begin{equation}
\label{eq:mu_tensor}
\mu(dq \, dp) = \nu(dq) \, \kappa_q(dp),
\end{equation}
where $\nu$ is defined in~\eqref{eq:nu} and $\kappa_q$ is a Gaussian measure on~$T^*_q \mathcal M$:
\begin{equation}
  \label{eq:kappa}
  \kappa_q(dp)=(2\pi)^{\frac{m-d}{2}} \exp \left(-\frac{p^T M^{-1} p}{2}\right)\sigma^{M^{-1}}_{T^*_q \mathcal M}(dp).
\end{equation}
In particular, the marginal of $\mu$ in the variable~$q$ is exactly~$\nu$.

\begin{rem}
  \label{rmk:restricted_vs_constrained}
  As will be made clear in Section~\ref{sec:GHMC_schemes}, the target measure sampled by the algorithms presented in this work is the phase-space measure~\eqref{eq:mu}. In previous works~\cite{lelievre-rousset-stoltz-book-10,lelievre-rousset-stoltz-12}, the target measure was a restricted phase-space measure, of the form $\mu(dq \, dp) \mathbf{1}_{D_{\Delta t}}(q,p)$, where $D_{\Delta t}$ is a subset of $T^*\mathcal{M}$ containing phase-space configurations for which the projection in the RATTLE algorithm (see~\eqref{eq:RATTLE} below) is possible. The marginal of the restricted measure in the variable~$q$ is not~$\nu$ in general. This is however the case, for example, when $D_{\Delta t}(q,p) = \{ (q,p)\in T^*\mathcal{M}, \ |p|^2 \leq R_{\Delta t} \}$ for some $R_{\Delta t} > 0$. In essence, the aim of the analysis provided in the sequel of this article is to make the set~$D_{\Delta t}$ as large as possible (see the definition~\eqref{eq:B}); and to sample the full phase-space measure rather than its restricted counterpart by taking into account configurations outside this maximal set. 
\end{rem}

\subsection{The RATTLE dynamics with reverse projection check}
\label{sec:RATTLE}

This section, which is the core of our analysis, shows how to construct a reversible map based on the RATTLE integrator (recalled in Section~\ref{sec:std_RATTLE}) which is well defined for arbitrary large timesteps. We discuss in Section~\ref{sec:proj} which properties the Lagrange multipliers used to enforce the position constraint should enjoy in order to construct globally well defined integrators (see Section~\ref{sec:Psi_dt}). This  finally allows constructing globally well defined reversible integrators in Section~\ref{sec:psi_B}.

\subsubsection{The RATTLE integrator}
\label{sec:std_RATTLE}

In order to sample the measures $\nu$ and $\mu$, we need as a crucial ingredient the RATTLE integrator~\cite{andersen-83}, which is a velocity-Verlet algorithm for Hamiltonian dynamics with constraints; see~\cite{hairer-lubich-wanner-06,leimkuhler-reich-04}. For a given timestep $\Delta t>0$ and a configuration $(q^n,p^n) \in T^* {\mathcal M}$, one step of the RATTLE algorithm proceeds as follows:
\begin{equation}
  \label{eq:RATTLE}
  \left\{ \begin{aligned} 
  & p^{n+1/2} =  p^{n} - \frac{\Delta t}{2} \nabla V (q^{n}) + \nabla \xi(q^n) \, \lambda^{n+1/2}, &\\
  & q^{n+1} =  q^{n} + \Delta t \, M^{-1} \, p^{n+1/2}, &\\
  & \xi(q^{n+1}) = 0, \qquad &(C_{q}) \\
  & p^{n+1} =  p^{n+1/2} - \frac{\Delta t}{2} \nabla V (q^{n+1}) + \nabla \xi(q^{n+1}) \, \lambda^{n+1}, &\\
  &\left[\nabla\xi (q^{n+1})\right]^T M^{-1} p^{n+1} = 0, \qquad &(C_{p})
\end{aligned} \right.
\end{equation}
where $\lambda^{n+1/2} \in \R^m$ are the Lagrange multipliers associated with the position constraints $(C_{q})$, and $\lambda^{n+1}\in \R^m$ are the Lagrange multipliers associated with the velocity constraints $(C_{p})$. The RATTLE scheme is a second order discretization of the constrained Hamiltonian dynamics
\[
\left\{
\begin{aligned}
  dq_t& = M^{-1} p_t \, dt \, ,\\
  dp_t&= -\nabla V(q_t) \, dt + \nabla \xi(q_t) \, d \lambda_t \, ,\\
  \xi(q_t)&=0 \,.
\end{aligned}
\right.
\]
 
In~\eqref{eq:RATTLE}, the momentum projection~$(C_p)$ is always well-defined since it corresponds to a linear projection on the vector space $T^*_{q^{n+1}} \mathcal M$. In fact,
\[
\lambda^{n+1} = -G_M\left(q^{n+1}\right)^{-1} \left[\nabla\xi (q^{n+1})\right]^T M^{-1}\left( p^{n+1/2} - \frac{\Delta t}{2} \nabla V (q^{n+1}) \right),
\]
and thus
\[
p^{n+1} = \Pi_{T^*_{q^{n+1}} \mathcal M}\left( p^{n+1/2} - \frac{\Delta t}{2} \nabla V (q^{n+1}) \right), 
\]
where 
\begin{equation}
  \label{eq:proj_T*M}
  \Pi_{T^*_{q} \mathcal M} = {\rm Id} - \nabla \xi(q) G_M^{-1}(q) [\nabla \xi(q)]^T M^{-1}
\end{equation}
is the orthogonal projection onto $T^*_{q} \mathcal M$, the orthogonality being with respect to the scalar product $\langle\cdot,\cdot\rangle_{M^{-1}}$. 

On the contrary, there may be many solutions or no solution at all to the nonlinear projection step used to enforce the position constraint~$(C_{q})$.
Note that, from~\eqref{eq:RATTLE}, 
\begin{equation}
  \label{eq:rewrite_increment}
  q^{n+1} = \tilde{q}^n + \Delta t \, M^{-1} \nabla \xi(q^n) \lambda^{n+1/2} \text{  where  } \tilde{q}^n = q^n + \dt M^{-1} \left(p^n - \frac{\dt}{2} \nabla V(q^n)\right).
\end{equation}
This expression suggests that the Lagrange multipliers can be thought of as functions of the current position~$q^n$ (which provides the direction $M^{-1} \nabla \xi(q^n)$ along which the projection is performed) and of the position $\tilde{q}^n$ obtained after one step of the unconstrained dynamics. Actually, it turns out to be more convenient to consider $\dt \lambda^{n+1/2}$ as the Lagrange multiplier to define the projection, in order to analyze the properties of the algorithm as $\Delta t$ varies.  It is the aim of the next section to precisely define the requirements needed on the Lagrange multiplier function $\Lambda(q^n,\tilde{q}^n)$ which is used to define $\dt \lambda^{n+1/2}=\Lambda(q^n,\tilde{q}^n)$. This is indeed a crucial point since there may be more than one possible projected position onto the submanifold, or maybe none (see Figure~\ref{fig:illustration_non_reversibility} below). We will then give in Section~\ref{sec:example} examples of Lagrange multiplier functions satisfying these requirements.

\subsubsection{The Lagrange multiplier function}
\label{sec:proj}

In order to make precise the properties we need on the numerical scheme, let us introduce what we call the Lagrange multiplier function, which encodes the numerical procedure used in~\eqref{eq:RATTLE} in order to enforce the position constraint~$(C_{q})$. This function depends on two parameters: the position~$q \in \mathcal M$ which provides the direction $M^{-1}\nabla \xi(q)$ along which the projection is sought and the position $\tilde{q} \in \mathbb{R}^d$ to be projected onto the submanifold~$\mathcal{M}$. As hinted at at the end of the previous section, the projection procedure is not well-defined in general: there can be no such projection, or multiple ones. The situation depends on the number of intersections of the $m$-dimensional vector space $\{ \tilde{q} + M^{-1}\nabla \xi(q)\theta, \theta \in \mathbb{R}^m\}$ with~$\mathcal{M}$. For instance, in the situation considered in Figure~\ref{fig:illustration_non_reversibility}, there are two possible projections~$q''$ and~$q$ in the direction $M^{-1}\nabla \xi(q)$ when starting from~$\widehat{q}$.

In order to ensure the well-posedness of the projection, we require various properties on the Lagrange multiplier function, made precise in Definition~\ref{def:proj_sel} below. The first property is that there exists indeed a projection. Moreover, in cases when there are several possible projection, the $C^1$ regularity of the Lagrange multiplier function makes sure that the solution is chosen on the same branch, at least locally. Let us emphasize that an important ingredient in the definition of the Lagrange multiplier function is its domain~$\mathcal{D}$.

\begin{defi}
  \label{def:proj_sel}
  An admissible Lagrange multiplier function $\Lambda$ is a $C^1$ function defined on an open set $\mathcal{D}$ of $\mathcal{M} \times \R^d$ with values in $\R^m$, and satisfying the following two properties: 
  \begin{itemize} 
  \item the projection property:
    \begin{equation}
      \label{eq:Pi}
      \forall (q,\tilde{q}) \in \cD, \qquad  \tilde{q}+ M^{-1} \nabla \xi(q) \Lambda(q,\tilde{q}) \in \mathcal M;
    \end{equation}
  \item the non-tangential projection property:
    \begin{equation}
      \label{eq:non_tang}
      \forall (q,\tilde{q}) \in \cD, \qquad \left[\nabla \xi\left(\tilde{q} + M^{-1}\nabla \xi(q) \Lambda(q,\tilde{q})\right)\right]^T M^{-1}\nabla \xi(q) \in \R^{m  \times m} \text{\, is invertible.}
    \end{equation}
  \end{itemize}
\end{defi}

In the following, we will always consider sets $\mathcal{D}$ which contain the pairs $(q,\tilde{q}) \in \mathcal{M} \times \mathcal{M}$ such that the matrix $\left[\nabla \xi\left(\tilde{q}\right)\right]^T M^{-1}\nabla \xi(q)$ is invertible, in which case $\Lambda(q,\tilde{q})=0$. Examples of admissible Lagrange multiplier functions are given in Section~\ref{sec:example}. We have in mind two typical examples. The first idea is to consider the projection of $\tilde{q}$ on $\mathcal M$ along the direction $M^{-1} \nabla \xi(q)$ which selects the Lagrange multiplier with the smallest norm (projection to the ``closest point''). This is formalized in Section~\ref{sec:ex1_} using the implicit function theorem. The second idea, which is more practical, is to use a Newton's algorithm to enforce the constraint (see for instance the discussion in~\cite{leimkuhler-reich-04} and Section~\ref{sec:ex2} below). 

The projection~\eqref{eq:Pi} is consistent with the projection needed in the RATTLE integrator to satisfy the position constraint. More precisely, the Lagrange multiplier in~\eqref{eq:RATTLE} writes
\[
\lambda^{n+1/2} = \frac{1}{\dt}\Lambda\left(q^n,q^n + \dt M^{-1} \left[p^n - \frac{\dt}{2} \nabla V(q^n)\right]\right).
\]
The set $\cD$ and the Lagrange multiplier function~$\Lambda$ encode the numerical procedure used to compute the Lagrange multiplier associated with the position constraints. For $(q,\tilde{q}) \in \mathcal{M} \times \R^d$, the Lagrange multiplier function selects some configuration $q'$ in the intersection of $\mathcal M$ and the affine space $\tilde{q} + \mathrm{Span}\{\delta_1,\ldots, \delta_m\}$ containing the position $\tilde{q}$ obtained after one step of the unconstrained dynamics, and generated by the linearly independent vectors $\delta = (\delta_1, \ldots, \delta_m) = M^{-1}\nabla \xi(q)$ defining the vector space orthogonal to the cotangent space at~$q$ for the scalar product $\langle\cdot,\cdot\rangle_{M^{-1}}$. See Figure~\ref{fig:illustration_non_reversibility} below for an illustration.

Let us emphasize that we assume that the function~$\Lambda$ is $C^1$. This requires discarding neighborhoods of initial positions~$q$ and unconstrained moves~$\tilde{q}$ for which the obtained projected positions change abruptly; see Figure~\ref{fig:illustration_non_C1} below for an illustration.

The non-tangential condition~\eqref{eq:non_tang} means that, for the selected projection $q' = \tilde{q} + M^{-1} \nabla \xi(q) \Lambda(q,\tilde{q})\in \mathcal M$, any basis of the cotangent space $T_{q'}^* \mathcal M$ is linearly independent of the directions $\nabla \xi(q) = (\nabla \xi_1(q), \ldots, \nabla \xi_m(q))$ used to perform the projection; see Figure~\ref{fig:non_tang} for an illustration of a situation where~\eqref{eq:non_tang} is not satisfied. When the non-tangential condition is not satisfied, there may be infinitely many ways to project the position back to the submanifold. The non-tangential condition ensures that only countably many projections have to be considered, and that the projection is locally unique.  
The non-tangential condition~\eqref{eq:non_tang} also has several other motivations:
\begin{itemize}
\item on the theoretical side, it allows us to ensure that RATTLE is reversible up to momentum reversal on an \emph{open set} constructed from~$\mathcal{D}$; see the informal discussion before the proof of Lemma~\ref{lem:B}. The non-tangential condition is a sufficient condition in this context.
\item a more practical remark is that the set of configurations~$(q,\tilde{q})$ on which~\eqref{eq:non_tang} does not hold usually is a set of measure~0 with respect to the Lebesgue measure~$dq \, d\tilde{q}$. It is therefore not restrictive to assume that it holds.
\item from a numerical viewpoint, the Lagrange multiplier is often computed using a Newton algorithm (see Section~\ref{sec:ex2}). In this case, the numerical procedure is well posed only if the non-tangential condition holds at the limiting point. The latter condition is therefore necessary in this context to run the numerical method. 
\end{itemize}

\begin{figure}
  \begin{center}
    \begin{tikzpicture}
      \draw [fill] (6,4.5) circle [radius=0.05] node [black,right=1] {$q$}; 
      \draw [fill] (6,2) circle [radius=0.05] node [black,below=2] {$\tilde{q}$};
      \draw [->] (6,4.5) -- (4.5,4.5);
      \draw (5,4.5) node [black,above=1] {$\nabla \xi(q)$};
      \draw (6,4.5) -- (6,2);
      \draw [thick] (-4,-0.5) 
      to [out=80,in=180] (2,2)
      to [out=0,in=270] (6,4.5)
      to [out=90,in=0] (4,5.5);
      \draw [dashed] (-2,2) 
      to [out=0,in=180] (6,2);
      \draw [->] (2,2) -- (2,4);
      \draw [fill] (2,2) circle [radius=0.05] node [black,below=2] {$q' = \tilde{q}+ M^{-1}\nabla \xi(q) \Lambda(q,\tilde{q})$};
      \draw (2,4) node [black,above=1] {$\nabla \xi(q')$};
    \end{tikzpicture}
    \caption{\label{fig:non_tang} Illustration of a situation when the non-tangential condition~\eqref{eq:non_tang} is not satisfied, for $m=1$ and $M=\mathrm{Id}$.}
  \end{center}
\end{figure}
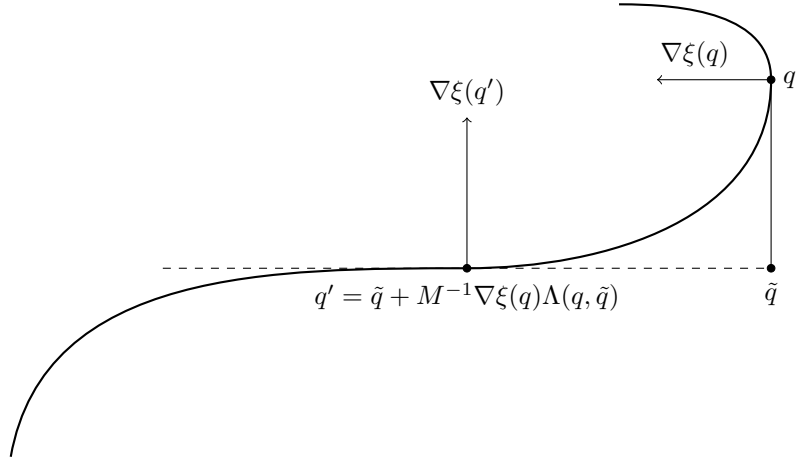

\subsubsection{The map $\Psi_{\Delta t}$}
\label{sec:Psi_dt}
 
The numerical integrator $\Psi_{\Delta t}$ we consider in the following is built as one step of the RATTLE scheme using the Lagrange multiplier function~$\Lambda$, composed with momentum reversal. The reason why we compose with momentum reversal is that we want the integrator to be an involution. This leads to a crucial simplification in the Metropolis ratio for HMC-like schemes, see Section~\ref{sec:GHMC_schemes}. 

More precisely, for a given timestep $\Delta t$, let us introduce the {\em admissible set} $A \subset T^* \mathcal M$ as the configurations~$(q,p)$ for which the positions obtained after the unconstrained integration of the positions in~\eqref{eq:RATTLE} belongs to~$\cD$ (and hence can be projected back onto the submanifold $\mathcal M$ using~$\Lambda$):
\begin{equation}
  \label{eq:def_A}
  A = \left\{ (q,p) \in T^*\mathcal{M}, \, \ \left(q, q+ \Delta t \, M^{-1} \left[p - \frac{\Delta t}{2} \nabla V (q) \right] \right) \in \cD  \right\}.
\end{equation}
This set is the natural counterpart of the domain~$\mathcal{D}$ of the Lagrange multiplier function~$\Lambda$. In essence, it is the set of phase-space configurations which can be integrated with one step of the RATTLE algorithm, and for which the projected positions vary smoothly with respect to~$(q,p)$. Here again, as made precise later on in Lemma~\ref{lem:B}, the union of sets such as~\eqref{eq:def_A} will allow to decompose the configuration space into patches corresponding to a certain choice of the projection. 

By continuity of~$V$, $A$ is an open set of $T^*\mathcal{M}$. We present in Section~\ref{sec:ex1_} explicit examples where we can prove that this set is not empty, see Lemma~\ref{lem:A_B_non_empty}. For a given $(q,p) \in A$, we define
\begin{equation}
  \label{eq:Psi}
  \Psi_{\Delta t}(q,p)=(q^{1},-p^{1}),
\end{equation} 
where $(q^{1},p^{1})$ is defined by one step of the RATTLE dynamics (see~\eqref{eq:RATTLE}):
\begin{equation}
  \label{eq:RATTLE_num}
  \left \{ \begin{aligned} 
    & \lambda^{1/2} = \frac{1}{\dt}\Lambda\left(q,q+\dt M^{-1} \left[p - \frac{\dt}{2} \nabla V(q)\right]\right), \\
    & p^{1/2} =  p - \frac{\Delta t}{2} \nabla V (q) + \nabla \xi(q) \, \lambda^{1/2}, \\
    & q^{1} = q + \Delta t \, M^{-1} \, p^{1/2}, \\
    & p^{1} = p^{1/2} - \frac{\Delta t}{2} \nabla V (q^{1}) + \nabla \xi(q^1) \lambda^1, \\
    & \left[\nabla \xi(q^1)\right]^T M^{-1} p^1 = 0. 
\end{aligned} \right.
\end{equation}
Notice that, by construction, $q^1 \in \mathcal M$ satisfies
\[
q^1 = \tilde{q}+ M^{-1}\nabla \xi(q) \Lambda(q,\tilde{q}) \text{  where  } \tilde{q} = q+ \Delta t \, M^{-1} \left(p - \frac{\Delta t}{2} \nabla V (q) \right).
\]
The non-tangential property~\eqref{eq:non_tang} implies that 
\begin{equation}
  \label{eq:non_tang_q^1}
\text{for any } (q,p) \in A, \, \left[\nabla \xi(q^1)\right]^T M^{-1} \nabla \xi(q) \text{ is invertible, where } q^1\text{ is defined by~\eqref{eq:RATTLE_num}}.
\end{equation}
Recall also that the last equations in~\eqref{eq:RATTLE_num} could be rephrased more explicitly as 
\[
p^{1} = \Pi_{T^*_{q^1} \mathcal M}\left( p^{1/2} - \frac{\Delta t}{2} \nabla V (q^{1}) \right),
\]
but we keep the above formulation since it is useful for the proof of Lemma~\ref{lem:B} below.

\medskip

Let us now state the crucial property of the map $\Psi_{\Delta t}$ we will need in the following.

\begin{prop}
  \label{prop:psi_Dt}
  Assume that $\Lambda: \cD  \to \R^m$ is an admissible Lagrange multiplier function in the sense of Definition~\ref{def:proj_sel}, and recall the definition~\eqref{eq:def_A} of the open set $A$. Then, the application $\Psi_{\Delta t}:A \to T^*{\mathcal M}$ defined by~\eqref{eq:Psi}-\eqref{eq:RATTLE_num} is a $C^1$ local diffeomorphism, locally preserving the phase-space measure $\sigma_{T^* {\mathcal M}} (dq \, dp)$.
\end{prop}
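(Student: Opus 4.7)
The plan is to write $\Psi_{\Delta t} = R \circ \Phi_{\Delta t}$, where $\Phi_{\Delta t}(q,p) = (q^1,p^1)$ denotes one RATTLE step (without momentum reversal) and $R(q,p)=(q,-p)$ is momentum reversal. Since $R$ is a smooth involution that acts isometrically on each linear fiber $T^*_q\mathcal M$, the tensorization~\eqref{eq:tensor} shows at once that $R$ preserves $\sigma_{T^*\mathcal M}$ globally. All three claims---$C^1$ regularity, local diffeomorphism, and local measure-preservation---therefore reduce to the corresponding properties for $\Phi_{\Delta t} : A \to T^*\mathcal M$.

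For the $C^1$ regularity of $\Phi_{\Delta t}$ I would simply follow the chain~\eqref{eq:RATTLE_num}. The unconstrained half-step position $\tilde q = q + \Delta t\, M^{-1}(p - \tfrac{\Delta t}{2}\nabla V(q))$ is smooth, and $(q,\tilde q)\in\cD$ holds on $A$ by~\eqref{eq:def_A}; hence $q^1 = \tilde q + M^{-1}\nabla\xi(q)\Lambda(q,\tilde q)$ inherits the $C^1$ regularity of $\Lambda$, $p^{1/2} = M(q^1 - q)/\Delta t$ is $C^1$, and $p^1 = \Pi_{T^*_{q^1}\mathcal M}(p^{1/2} - \tfrac{\Delta t}{2}\nabla V(q^1))$ is $C^1$ because $G_M$ is invertible on a neighborhood of $\mathcal M$, making~\eqref{eq:proj_T*M} smooth in $q^1$.

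For the local diffeomorphism and measure preservation I would invoke the classical symplecticity of the RATTLE scheme: $\Phi_{\Delta t}$ preserves the canonical symplectic form $\omega_{T^*\mathcal M}$. Since $\sigma_{T^*\mathcal M}$ is proportional to the associated Liouville volume $\omega_{T^*\mathcal M}^{\wedge(d-m)}/(d-m)!$ (this being the identification that underlies the mass-independence statement following~\eqref{eq:tensor}), symplecticity immediately gives the local measure preservation. It also forces $D\Phi_{\Delta t}$ to be an isomorphism at each point of $A$, and the inverse function theorem then delivers the local diffeomorphism property.

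The hard part will be justifying symplecticity outside the small-$\Delta t$ regime covered by the textbook proofs of~\cite{hairer-lubich-wanner-06,leimkuhler-reich-04}, in which a unique nearby projection is obtained from the implicit function theorem. In our setting that single-valued selection role is played by $\Lambda$ from Definition~\ref{def:proj_sel}, while the non-singularity of the linear systems determining the increments $\delta\lambda^{1/2}$ and $\delta\lambda^1$ upon differentiation of~\eqref{eq:RATTLE_num} is guaranteed by the non-tangency property~\eqref{eq:non_tang_q^1}---so the textbook argument transfers almost verbatim. If a self-contained route is preferable, one may instead differentiate~\eqref{eq:RATTLE_num} with respect to $(q,p)$ in local coordinates adapted to $T^*\mathcal M$, eliminate $\delta\lambda^{1/2}$ and $\delta\lambda^1$ using the position and velocity constraint differentials (both invertible by~\eqref{eq:non_tang_q^1}), and verify directly via the standard block calculation that the resulting $2(d-m)\times 2(d-m)$ Jacobian has unit determinant modulus.
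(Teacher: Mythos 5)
Your proposal is correct and follows essentially the same route as the paper: decompose $\Psi_{\Delta t}$ as momentum reversal composed with the RATTLE map $\Phi_{\Delta t}$, obtain $C^1$ regularity from the smoothness of $\Lambda$, $\nabla\xi$ and $\nabla V$ on $A$, and deduce both the local diffeomorphism property and the local measure preservation from the symplecticity of $\Phi_{\Delta t}$, checked by differentiating the scheme as in the cited references. Your additional remark that the non-tangency condition~\eqref{eq:non_tang_q^1} is what makes the textbook symplecticity computation carry over beyond the small-timestep regime is a useful elaboration of a point the paper leaves implicit, but it does not change the argument.
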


\begin{proof}
By assumption, $\Lambda$, $\nabla \xi$ and $\nabla V$ are $C^1$ on $A$, so that the RATTLE numerical flow $\Phi_{\Delta t}: (q,p) \mapsto (q^{1},p^{1})$ is also $C^1$. The symplecticity of $\Phi_{\Delta t}$ can be checked as in~\cite{leimkuhler-reich-94} and~\cite[Section~VII.1.3]{hairer-lubich-wanner-06} by computing $\nabla \Phi_{\Delta t}$. Since $\Phi_{\Delta t}$ is a smooth symplectic map, it is a measure-preserving local diffeomorphism. Note next that $\Psi_{\Delta t}$ is obtained by composing $\Phi_{\Delta t}$ with the momentum reversal $(q,p) \mapsto (q,-p)$. Therefore, $\Psi_{\Delta t}$ is also a $C^1$ local diffeomorphism. Since the momentum reversal preserves the measure $\sigma_{T^* {\mathcal M}} (dq \, dp)$, we can conclude that $\Psi_{\Delta t}$ also preserves the latter measure.
\end{proof}

\subsubsection{The map $\Psi_{\Delta t}^{\rm rev}$}
\label{sec:psi_B}

The RATTLE dynamics with momentum reversal and reverse projection check is now defined as follows: for any $(q,p) \in T^*{\mathcal M}$,
\begin{equation}
  \label{eq:PsiB}
  \Psi_{\Delta t}^{\rm rev}(q,p)=\Psi_{\Delta t}(q,p) 1_{\{(q,p) \in B\}} + (q,p) 1_{\{(q,p) \not\in B\}},
\end{equation}
where the set $B \subset A \subset T^*\mathcal{M}$ is
\begin{equation}
  \label{eq:B}
  B = \Big\{ (q,p) \in A, \, \Psi_{\Delta t}(q,p) \in A \text{ and } (\Psi_{\Delta t} \circ \Psi_{\Delta t})(q,p) = (q,p) \Big\}.
\end{equation}
In words, the set~$B$ is the set of configurations~$(q,p)$ for which the RATTLE scheme is well posed and its ouput~$(q',p')$ depends continously on the input configuration~$(q,p)$, and moreover the RATTLE scheme is also well defined for the input~$(q',-p')$. This will allow to check whether an application of RATTLE to~$(q',-p')$ will lead back to~$(q,-p)$, the initial configuration up to momentum reversal. The reversibility of RATTLE can therefore be true only in a subset of~$B$, which shows the importance of understanding the properties of this ensemble.

We provide in Sections~\ref{sec:ex1_}-\ref{sec:rattle_imp} an explicit example where the set $B$ can be proved to be non empty (see Lemma~\ref{lem:A_B_non_empty}). The motivation for the definition~\eqref{eq:PsiB} of $\Psi_{\Delta t}^{\rm rev}$ is to construct a piecewise measure-preserving diffeomorphism defined on the whole space $T^*\mathcal M$, and not just on~$B$.

More concretely, $\Psi_{\Delta t}^{\rm rev}(q,p)$ is obtained from $(q,p) \in T^*{\mathcal M}$ by the following procedure:
\begin{enumerate}[(1)]
\item check if $(q,p)$ is in $A$; if not return $(q,p)$;
\item when $(q,p) \in A$, compute the configuration $(q^{1},p^{1})$ obtained by one step of the RATTLE scheme~\eqref{eq:RATTLE_num}; 
\item check if $(q^1,-p^1)$ is in $A$; if not, return $(q,p)$;
\item compute the configuration $(q^2,-p^2)$ obtained by one step of the RATTLE scheme~\eqref{eq:RATTLE_num} starting from $(q^{1},-p^{1})$;
\item if $(q^2,p^2)=(q,p)$, return $(q^{1},-p^{1})$; otherwise return $(q,p)$.
\end{enumerate}
The steps (3)-(4)-(5) correspond to the {\em reverse projection check}. For sufficiently small timesteps, it is expected that these steps are useless since the RATTLE integrator is known to be reversible with respect to momentum reversal (see the definitions in~\cite[Section~V.1]{hairer-lubich-wanner-06}): denoting by~$S$ the momentum reversal operator acting as $S(q,p) = (q,-p)$, and by $\Phi_{\Delta t}$ the flow of the RATTLE scheme, it holds $S \circ \Phi_{\Delta t} = \Phi_{\Delta t}^{-1}\circ S$. In fact, this property, named $S$-reversibility, is equivalent to the symmetry and time-reversibility of the method; see~\cite[Theorem~V.1.5]{hairer-lubich-wanner-06}. This is formalized in Lemma~\ref{lem:rev_Psi_dt} below. However, in practice, ensuring that the timestep is sufficiently small so that the $S$-reversibility is satisfied is not obvious, and it may actually be useful to use large timesteps, in which case the  {\em reverse projection check} is necessary, see Figure~\ref{fig:illustration_non_reversibility} for an illustration. We refer to Section~\ref{sec:summary_algo} for a discussion on how to check the conditions $(q,p) \in A$ and $(q^1,-p^1) \in A$.

\begin{figure}
  \begin{center}
    \begin{tikzpicture}
      \draw [thick] (0,0) 
      to [out=270,in=180] (2,-2)
      to [out=0,in=180] (4,-1)
      to [out=0,in=100] (6,-2);
      \draw [thick] (0,0) 
      to [out=90,in=180] (2,2)
      to [out=0,in=180] (4,1)
      to [out=0,in=240] (7,3);
      \draw [fill] (5.65,-1.4) circle [radius=0.05] node [black,below=2] {$q$}; 
      \draw [dashed] (5.65,-1.4) to (-0.5,2.5);
      \draw [->] (5.65,-1.4) to (4,-0.35);
      \draw (4,-0.35) node [black,right=3] {$p$};
      \draw [->] (5.65,-1.4) -- (6.5,-0.06);
      \draw (6.5,-0.06) node [black,above=1] {$\nabla \xi(q)$};
      \draw [fill] (2,2) circle [radius=0.05] node [black,above=2] {$q' = \tilde{q} + M^{-1} \nabla \xi(q) \Lambda(q,\tilde{q})$}; 
      \draw [->] (2,2) -- (3.5,2);
      \draw (3.5,2) node [black,below=1] {$-p'$};
      \draw [dashed] (2,2) to (6.5,2);
      \draw (5.65,2) to (5.65,-1.4);
      \draw [fill] (5.65,1.46) circle [radius=0.05] node [black,right=1] {$q''$};
      \draw [->] (2,2) -- (2,0);
      \draw (2,0) node [black,below=1] {$\nabla \xi(q')$}; 
      \draw (1.5,1.215) to (2,2); 
      \draw [fill] (1.5,1.215) circle [radius=0.05] node {};
      \draw (1.1,0.9) node [black] {$\tilde{q} = q + v$};
      \draw [fill] (5.65,2) circle [radius=0.05] node [black,above=1] {$\widehat{q} = q' + v'$}; 
    \end{tikzpicture}
    \caption{\label{fig:illustration_non_reversibility} Illustration of the need for a reverse projection check, for $V=0$, $M=\mathrm{Id}$ and a projection defined using the closest point to the submanifold. Starting from a position~$q$ and a momentum~$p$, an unconstrained update of the position leads to~$\tilde{q} = q + v$ with $v = \Delta t \, M^{-1}p$, which is then projected back onto the submanifold in the direction $M^{-1} \nabla \xi(q)$ as $q'=\tilde{q} + M^{-1} \nabla \xi(q) \Lambda(q,\tilde{q})$. The associated momentum is denoted by~$p'$. Starting from~$q'$ with momentum~$-p'$, the reverse move proceeds by an unconstrained update of the position, leading to~$\widehat{q} = q' + v'$ with $v' = -\dt M^{-1}p'$. On this example, the closest point from this one on the submanifold in the direction $M^{-1} \nabla \xi(q')$ is $q''$, which is different from~$q$. }
  \end{center}
\end{figure}
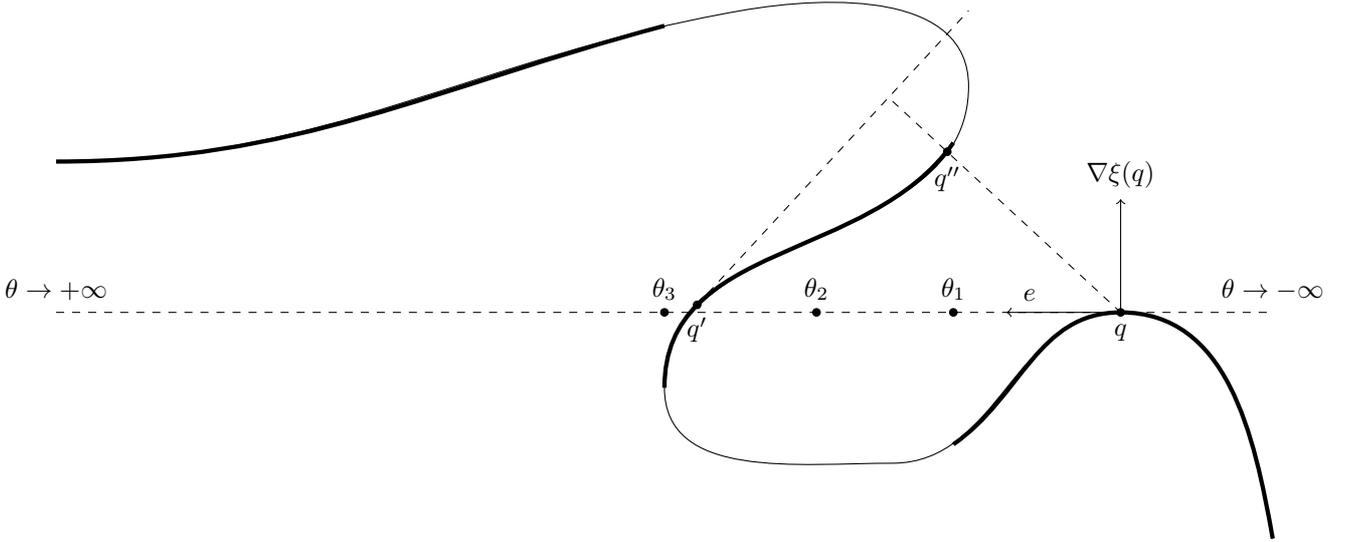
\begin{figure}
  \begin{center}
    \begin{tikzpicture}
      \draw [dashed] (-1,0) -- (15,0);
      \draw [->] (13,0) -- (13,1.5);
      \draw (13,1.5) node [black,above=1] {$\nabla \xi(q)$};
      \draw (-1,2)
      to [in=190,out=0] (8,4)
      to [in=90,out=10] (11,3)
      to [in=90,out=270] (7,-1)
      to [in=180,out=270] (10,-2)
      to [in=180,out=0] (13,0)
      to [in=100,out=0] (15,-3);
      \draw (15,0) node [black,above=1] {$\theta \to -\infty$};
      \draw (-1,0) node [black,above=1] {$\theta \to +\infty$};
      \draw [fill] (13,0) circle [radius=0.05] node [black,below=1] {$q$};
      \draw [->] (13,0) -- (11.5,0);
      \draw (11.8,0) node [black,above=1] {$e$};
      \draw [fill] (10.8,0) circle [radius=0.05] node [black,above=1] {$\theta_1$};
      \draw [fill] (9,0) circle [radius=0.05] node [black,above=1] {$\theta_2$};
      \draw [fill] (7,0) circle [radius=0.05] node [black,above=1] {$\theta_3$};
      \draw [ultra thick] (-1,2)
      to [in=195,out=0] (7,3.8);
      \draw [ultra thick] (7,-1)
      to [in=235,out=90] (10.8,2.25);
      \draw [ultra thick] (10.8,-1.75)
      to [in=180,out=35] (13,0)
      to [in=100,out=0] (15,-3);
      \draw [fill] (7.43,0.1) circle [radius=0.05] node [black,below=1] {$q'$};
      \draw [fill] (10.72,2.13) circle [radius=0.05] node [black,below=2] {$q''$};
      \draw [dashed] (7.43,0.1) -- (11,4);
      \draw [dashed] (10,2.8) -- (13,0);
    \end{tikzpicture}
    \caption{\label{fig:illustration_non_C1} For a given initial position~$q$, and various increments~$v_\theta = \Delta t M^{-1} p_\theta$ with $p_\theta = \theta \, e$ and $e$ a unit vector in $T^*_{q}\mathcal{M}$ (with $m=1$, $M = \mathrm{Id}$, $V=0$), the projection of $\tilde{q} = q+v$ onto the submanifold (thick black lines), chosen by minimizing the distance to the submanifold, can change abruptly. This is the case here around~$v_{\theta_1}$ and~$v_{\theta_3}$. For $\theta < \theta_1$, the resulting projection $q+v_\theta+M^{-1}\nabla \xi(q)\Lambda(q,q+v_\theta)$ is in the bottom part of the submanifold; for $\theta_1 < \theta < \theta_3$, it is in the middle part; while for $\theta > \theta_3$ it is on the upper part. Inverting momenta and performing one step of the RATTLE scheme from the projected configurations obtained with $\theta < \theta_2$ or $\theta > \theta_3$ allows to come back to the initial configuration~$(q,p)$. However, for increments $v_\theta$ with $\theta \in (\theta_2,\theta_3)$, the so-obtained projected position is different from~$q$, and lies on the upper side of the submanifold when choosing the projection to be the closest position on the submanifold (see how $q''$ is obtained starting from~$q'$). The increment~$v_{\theta_2}$ corresponds to the projected position where the normal is parallel to the tangent to the submanifold passing through~$q$ and a point on the upper part of the submanifold. In this example, the reverse projection check is not successful for increments corresponding to $\theta \in (\theta_2,\theta_3)$, while it is successful for small increments (corresponding to $\theta < \theta_2$) or for sufficiently large ones (corresponding to $\theta > \theta_3$).}
  \end{center}
\end{figure}

The following result shows that $B$ is an open set, obtained as the union of open connected components of $A \cap \Psi_{\Delta t}^{-1}(A)$.  We provide in Figure~\ref{fig:illustration_non_C1}  an illustration of momenta~$p$ belonging to~$B$ or not, for a given~$q \in \mathcal{M}$. The fact that $B$ is open is an important property for proving the correctness of the sampling method (see the proof of Proposition~\ref{prop:PsiA}). Before stating the result, we recall that a subset $S_0 \subset S$ is a path connected component of $S$ if any element of $S$ which can be connected to an element of $S_0$ by a continuous path in $S$, also belongs to $S_0$ (and in fact all elements along the path are in~$S_0$). Since $T^* \mathcal{M}$ is a topological manifold, it is locally path connected, and hence the path connected components of any open subset of $T^* \mathcal{M}$ are exactly its connected components, and these connected components are open subsets of $T^* \mathcal{M}$ (see for instance~\cite[Theorem~2.9.22]{Schwartz1}).

\begin{lem}
  \label{lem:B} 
  Let $C$ be a path connected component of $A \cap \Psi^{-1}_{\Delta t}(A)$. If there is $(q,p) \in C$ such that $(\Psi_{\Delta t}\circ \Psi_{\Delta t})(q,p) = (q,p)$, then 
  \begin{equation}\label{eq:PC}
  \forall (q,p) \in C, \qquad (\Psi_{\Delta t}\circ \Psi_{\Delta t})(q,p) = (q,p).
  \end{equation}
  As a corollary, the set $B$ defined by~\eqref{eq:B}, namely
  \[
  B = \Big\{ (q,p) \in A \cap \Psi_{\Delta t}^{-1}(A), \ (\Psi_{\Delta t}\circ \Psi_{\Delta t})(q,p) = (q,p) \Big\}
  \]
  is the union of path connected components of the open set $A \cap \Psi_{\Delta t}^{-1}(A)$. In particular, it is an open set of $T^*\mathcal{M}$.
\end{lem}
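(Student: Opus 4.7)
The plan is to study the set $E = \{(q,p) \in A \cap \Psi_{\Delta t}^{-1}(A) \, : \, (\Psi_{\Delta t} \circ \Psi_{\Delta t})(q,p) = (q,p)\}$ and show it is both closed and open in $A \cap \Psi_{\Delta t}^{-1}(A)$. Closedness is immediate from continuity of $\Psi_{\Delta t}$ (Proposition~\ref{prop:psi_Dt}). Once openness is established, the connectedness of $C$ and the nonemptiness of $E \cap C$ force $C \subset E$, which is exactly~\eqref{eq:PC}. The corollary then follows: $B = E$ is, by the first statement, a union of path connected components of the open set $A \cap \Psi_{\Delta t}^{-1}(A)$, and since $T^* \mathcal{M}$ is locally path connected, these components are open in $T^* \mathcal{M}$, hence so is $B$.

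The core of the argument, and where I expect the main obstacle, is proving openness of $E$. My first step is to reformulate $\Psi_{\Delta t}^2(q,p) = (q,p)$ as a concrete identity between Lagrange multipliers. Denote by $(q^1, p^1)$ the result of one non-reversed RATTLE step~\eqref{eq:RATTLE_num} from $(q,p)$ and by $\lambda^{1/2}$ its position Lagrange multiplier, so that $\Delta t\,\lambda^{1/2} = \Lambda(q, \tilde{q})$ with $\tilde{q} = q + \Delta t\, M^{-1}(p - (\Delta t/2)\nabla V(q))$. A short manipulation of~\eqref{eq:RATTLE_num} shows that
\[
q \;=\; \overline{q} + M^{-1}\nabla\xi(q^1)\cdot \Delta t\,\lambda^{1/2}, \qquad \overline{q} := q^1 + \Delta t\,M^{-1}\!\left(-p^1 - \tfrac{\Delta t}{2}\nabla V(q^1)\right),
\]
where $\overline{q}$ is the unconstrained position arising in the reverse RATTLE step from $(q^1,-p^1)$. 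Substituting $\Delta t\,\lambda^{1/2}$ as the position Lagrange multiplier of that reverse step and unwinding~\eqref{eq:RATTLE_num} produces intermediate momentum $-p^{1/2}$, final position $q$, and (using $[\nabla\xi(q)]^T M^{-1} p = 0$ together with the uniqueness of the linear cotangent projection in~\eqref{eq:proj_T*M}) final momentum $-p$; the outer momentum reversal built into $\Psi_{\Delta t}$ then recovers $(q,p)$. Conversely, if $\Psi_{\Delta t}^2(q,p) = (q,p)$, then the reverse step's projected position is $q$, and since $\nabla\xi(q^1)$ has full rank, the value of $\Lambda$ selected there must equal $\Delta t\,\lambda^{1/2}$. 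Hence $(q,p) \in E$ if and only if $\Lambda(q^1, \overline{q}) = \Delta t\,\lambda^{1/2}$.

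Openness then reduces to a local uniqueness statement for the roots $\theta \in \mathbb{R}^m$ of the projection equation $\xi(\overline{q} + M^{-1}\nabla\xi(q^1)\theta) = 0$. At any $(q,p) \in E$, the non-tangential condition~\eqref{eq:non_tang} applied at $(q^1,\overline{q}) \in \mathcal{D}$ says that the Jacobian of this equation at $\theta = \Delta t\,\lambda^{1/2}$ equals $[\nabla\xi(q)]^T M^{-1}\nabla\xi(q^1)$ and is invertible. The implicit function theorem then provides, in a neighborhood of $(q,p)$ inside $A \cap \Psi_{\Delta t}^{-1}(A)$, a unique $C^1$ branch of roots near $\Delta t\,\lambda^{1/2}$. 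Viewed as functions of the base point, both $\Lambda(q^1, \overline{q})$ and $\Delta t\,\lambda^{1/2}$ are $C^1$, both satisfy the projection equation, and they coincide at $(q,p)$; local uniqueness therefore forces them to agree throughout a neighborhood. By the equivalence of the previous paragraph, $E$ is open.
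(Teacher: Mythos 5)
Your clopen argument is a genuinely different, and arguably cleaner, route than the paper's. The paper argues by contradiction along a continuous path joining a point where the involution holds to one where it fails: it locates the supremum parameter $\theta_\star$ up to which the involution persists, normalizes the differences $q_{\theta_n}^2-q_{\theta_n}$, extracts a limiting unit direction that must lie simultaneously in $T^*_{q_{\theta_\star}}\mathcal{M}$ and in the range of $M^{-1}\nabla\xi(q^1_{\theta_\star})$, and contradicts the non-tangential condition. You instead show that the fixed-point set is open and closed in $A\cap\Psi_{\Delta t}^{-1}(A)$, with openness obtained from the implicit function theorem applied to the reverse projection equation; the non-tangential condition~\eqref{eq:non_tang} enters through the same invertibility of $[\nabla \xi(q)]^T M^{-1}\nabla\xi(q^1)$, but is used constructively rather than by contradiction, and the compactness/subsequence extraction disappears. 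Both proofs rest on the same two ingredients: the reduction of $(\Psi_{\Delta t}\circ\Psi_{\Delta t})(q,p)=(q,p)$ to an identity between Lagrange multipliers, and the non-tangential property at the projected point.

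One correction is needed: the multiplier in your key identity is the momentum-constraint multiplier $\lambda^1$ of the forward step, not the position-constraint multiplier $\lambda^{1/2}$. From~\eqref{eq:RATTLE_num}, $q=q^1-\Delta t\,M^{-1}p^{1/2}$ and $p^{1/2}=p^1+\frac{\Delta t}{2}\nabla V(q^1)-\nabla\xi(q^1)\lambda^1$, whence
\[
q=\overline{q}+M^{-1}\nabla\xi(q^1)\,\Delta t\,\lambda^{1},\qquad \overline{q}=q^1+\Delta t\,M^{-1}\left(-p^1-\tfrac{\Delta t}{2}\nabla V(q^1)\right),
\]
which is~\eqref{eq:q_reformulated} in the paper. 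With $\Delta t\,\lambda^{1/2}$ in place of $\Delta t\,\lambda^{1}$ the displayed identity is false in general (the two multipliers coincide only in special cases such as Remark~\ref{rem:simple_example}), the reverse step with that multiplier would not produce intermediate momentum $-p^{1/2}$, and the Jacobian you invoke would be evaluated at a point of $\mathcal{M}$ other than $q$, so the non-tangential condition would not give its invertibility. After replacing $\lambda^{1/2}$ by $\lambda^{1}$ throughout this part of the argument --- noting that $\lambda^{1}$, like $\lambda^{1/2}$, is a $C^1$ function of $(q,p)$ on $A$, being given by an explicit formula involving $G_M(q^1)^{-1}$ and the $C^1$ maps $(q,p)\mapsto(q^1,p^{1/2})$ --- the equivalence between $(q,p)\in E$ and $\Lambda(q^1,\overline{q})=\Delta t\,\lambda^{1}$, the identification of the Jacobian with $[\nabla\xi(q)]^T M^{-1}\nabla\xi(q^1)$, and the local-uniqueness step via the implicit function theorem all go through exactly as you describe, and the proof is complete.
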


A direct corollary of Proposition~\ref{prop:psi_Dt} and Lemma~\ref{lem:B} is the following result, where we denote by $B^c = T^*\mathcal{M} \backslash B$ the complement of~$B$ in $T^*\mathcal{M}$.

\begin{prop}
  \label{prop:PsiA}
  The map $\Psi_{\Delta t}^{\rm rev}: T^* \mathcal M \to T^* \mathcal M$ defined by~\eqref{eq:PsiB} is globally well defined, and satisfies
$$\Psi_{\Delta t}^{\rm rev} \circ \Psi_{\Delta t}^{\rm rev} = {\rm Id}.$$
Moreover, both $\Psi_{\Delta t}^{\rm rev}:B \to B$ and  $\Psi_{\Delta t}^{\rm rev}:B^c \to B^c$ are $C^1$-diffeomorphisms which preserve the measure  $\sigma_{T^* {\mathcal M}} (dq \, dp)$.
As a consequence, $\Psi_{\Delta t}^{\rm rev}: T^* \mathcal M \to T^* \mathcal M$ globally
 preserves the measure $\sigma_{T^* {\mathcal M}} (dq \, dp)$.
\end{prop}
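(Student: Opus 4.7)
The plan is to exploit the decomposition $T^*\mathcal{M} = B \sqcup B^c$ from Lemma~\ref{lem:B} (so in particular $B$ is open and $B^c$ is closed in $T^*\mathcal{M}$) and to analyze $\Psi_{\Delta t}^{\rm rev}$ separately on the two pieces. Global well-definedness is immediate from~\eqref{eq:PsiB}: on $B \subset A$ the map $\Psi_{\Delta t}$ is defined by Proposition~\ref{prop:psi_Dt}, while on $B^c$ we simply return the identity.

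The core of the argument is to establish the stability $\Psi_{\Delta t}(B) \subset B$. To this end, I would fix $(q,p) \in B$ and set $(q',p') = \Psi_{\Delta t}(q,p)$. The three defining conditions of $B$ applied at $(q,p)$ give $(q',p') \in A$ directly, and $\Psi_{\Delta t}(q',p') = \Psi_{\Delta t}\circ\Psi_{\Delta t}(q,p) = (q,p) \in A$. The involution condition at $(q',p')$ then reads
\[
\Psi_{\Delta t} \circ \Psi_{\Delta t}(q',p') = \Psi_{\Delta t}\bigl(\Psi_{\Delta t}\circ \Psi_{\Delta t}(q,p)\bigr) = \Psi_{\Delta t}(q,p) = (q',p'),
\]
so $(q',p') \in B$. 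This stability, combined with $(\Psi_{\Delta t} \circ \Psi_{\Delta t})|_B = \mathrm{Id}_B$, shows that $\Psi_{\Delta t}|_B : B \to B$ is a bijective involution. The involution property $\Psi_{\Delta t}^{\rm rev} \circ \Psi_{\Delta t}^{\rm rev} = \mathrm{Id}$ then follows piece by piece: on $B^c$ it is trivial, and on $B$ it reduces to the defining condition $\Psi_{\Delta t}\circ\Psi_{\Delta t} = \mathrm{Id}$ recalled above.

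For the $C^1$ diffeomorphism and measure preservation assertions, on $B^c$ the map is the identity, which trivially preserves any measure. On the open set $B$, Proposition~\ref{prop:psi_Dt} ensures that $\Psi_{\Delta t}$ is a $C^1$ local diffeomorphism, and combined with the global bijectivity from the previous step it becomes a global $C^1$ diffeomorphism $B \to B$. Local preservation of $\sigma_{T^*\mathcal{M}}$ by a global diffeomorphism upgrades to global preservation on $B$ via the change of variables formula, since the Jacobian determinant with respect to $\sigma_{T^*\mathcal{M}}$ is a continuous function equal to $1$ near each point, hence identically $1$. Finally, for any Borel $E \subset T^*\mathcal{M}$, the decomposition $E = (E\cap B) \sqcup (E\cap B^c)$ is stable under $(\Psi_{\Delta t}^{\rm rev})^{-1}$ (both $B$ and $B^c$ being stable), so measure preservation on each piece yields the global result.

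The only substantive step is the stability $\Psi_{\Delta t}(B) \subset B$; everything else is essentially bookkeeping given Proposition~\ref{prop:psi_Dt} and the description of $B$ in Lemma~\ref{lem:B}. Notably, the finer topological content of Lemma~\ref{lem:B}, namely that $B$ is a union of path connected components of $A \cap \Psi_{\Delta t}^{-1}(A)$, is not needed for this proof beyond the corollary that $B$ is open (which is what makes the local-to-global arguments on $B$ legitimate).
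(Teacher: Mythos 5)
Your proof is correct and follows essentially the same route as the paper: decompose $T^*\mathcal{M}$ into $B$ and $B^c$, treat $\Psi_{\Delta t}^{\rm rev}$ as an involutive measure-preserving $C^1$ diffeomorphism on each piece, and sum over the decomposition for global measure preservation. Your explicit verification of the stability $\Psi_{\Delta t}(B)\subset B$ is a point the paper leaves implicit when it calls $\Psi_{\Delta t}$ ``an involution on $B$'', so spelling it out is a welcome addition rather than a deviation.
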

\begin{proof}
  Since $B$ is open and $\Psi_{\Delta t}$ is a measure-preserving local $C^1$ diffeomorphism and an involution on~$B$, this mapping is a global measure-preserving  $C^1$ diffeomorphism from $B$ to $B$. Moreover, since $\Psi_{\Delta t}$ is the identity map on $B^c$, it is also an involution, a global measure-preserving and a $C^1$ diffeomorphism from $B^c$ to $B^c$. Therefore, $\Psi_{\Delta t}^{\rm rev} \circ \Psi_{\Delta t}^{\rm rev} = \mathrm{Id}$ by distinguishing the cases when $(q,p) \in B^c$, which is trivial; or $(q,p) \in B$ (in which case we rely on Lemma~\ref{lem:B}). 

Moreover, for any measurable set $S \subset T^*\mathcal{M}$, and using the notation $|S|= \int_{T^* {\mathcal M}} 1_{S}(q,p) \sigma_{T^* {\mathcal M}} (dq \, dp)$,
  \[
  \left|(\Psi_{\Delta t}^{\rm rev})^{-1}(S)\right| = \left|(\Psi_{\Delta t}^{\rm rev})^{-1}(S \cap B)\right| + \left|(\Psi_{\Delta t}^{\rm rev})^{-1}(S \cap B^c)\right| = \left|(\Psi_{\Delta t})^{-1}(S \cap B)\right| + \left|S \cap B^c\right| = \left|S \cap B\right| + \left|S \cap B^c\right| = |S|,
  \]
  where we used the definition of~$\Psi_{\Delta t}^{\rm rev}$ to obtain the second equality, and the measure preserving properties of~$\Psi_{\Delta t}$ on~$B$ for the third one. This proves that $\Psi_{\Delta t}^{\rm rev}$ preserves the measure $\sigma_{T^* {\mathcal M}} (dq \, dp)$.
\end{proof}

Notice the importance of the reverse projection check to prove that $\Psi_{\Delta t}^{\rm rev}$ is indeed an involution. Let us also emphasize that it is crucial in the above proof of the invariance of the Lebesgue measure (see Proposition~\ref{prop:PsiA}) that $B$ is an open set, in order to perform the change of variable argument.

Let us now conclude this section by providing the proof of Lemma~\ref{lem:B}. The key idea is that points in $B$ for which it is not possible to build a neighborhood in $B$ are necessarily configurations where the non-tangential condition~\eqref{eq:non_tang_q^1} is not satisfied. Indeed, this is would mean that it is possible to construct two paths in configuration space originating from such points which smoothly branch out to different projections (namely different solutions of~\eqref{eq:Pi}; see Figure~\ref{fig:illustration_proof}). This is why we excluded in the Definition~\ref{def:proj_sel} of the Lagrange multiplier points where the non-tangential condition is not satisfied. The non-tangential condition~\eqref{eq:non_tang_q^1} is therefore a sufficient condition to prove that the set $B$ is indeed an open set, as stated in Lemma~\ref{lem:B}.

\begin{proof}[Proof of Lemma~\ref{lem:B}]
  Note that $A \cap \Psi_{\Delta t}^{-1}(A)$ is open by continuity of $\Psi_{\Delta t}$ so that the results stated in Lemma~\ref{lem:B} are a direct consequence of~\eqref{eq:PC}. Let us prove~\eqref{eq:PC}, arguing by contradiction. Consider a non-empty connected component~$C$ of $A \cap \Psi_{\Delta t}^{-1}(A)$, and an element $(q_0,p_0) \in C$ such that $(\Psi_{\Delta t}\circ \Psi_{\Delta t})(q_0,p_0) = (q_0,p_0)$. Note that $\Psi_{\Delta t}\circ \Psi_{\Delta t}$ is indeed well defined on $A \cap \Psi_{\Delta t}^{-1}(A)$. Assume that there exists $(q_1,p_1) \in C$ such that $(\Psi_{\Delta t}\circ \Psi_{\Delta t})(q_1,p_1) \neq (q_1,p_1)$. Introduce a continuous path $[0,1] \ni \theta \mapsto (q_\theta,p_\theta)$ with values in $A \cap \Psi_{\Delta t}^{-1}(A)$ linking $(q_0,p_0)$ to $(q_1,p_1)$. We will prove that there exists $\theta_\star \in (0,1)$ such that the non-tangential property~\eqref{eq:non_tang_q^1} is violated at~$(q,p)=(q_{\theta_\star}, p_{\theta_\star})$, which is in contradiction with $(q_{\theta_\star},p_{\theta_\star}) \in A$. See Figure~\ref{fig:illustration_proof} for a schematic representation of the various objects introduced in this proof. 

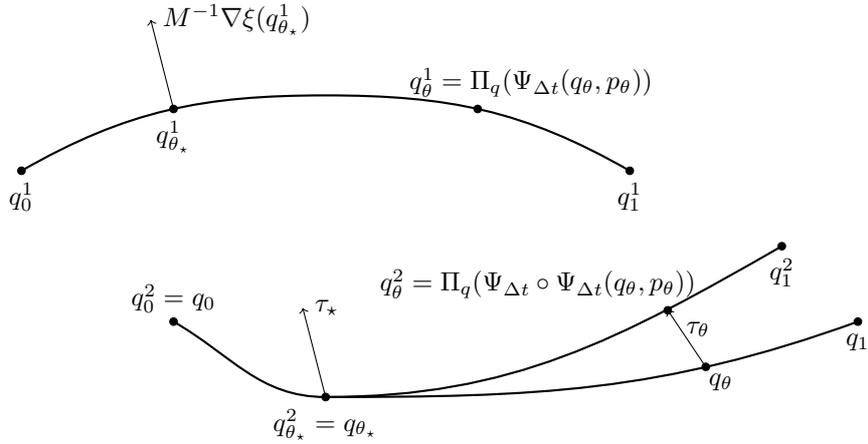
\begin{figure}
  \begin{center}
    \begin{tikzpicture}
      \draw [thick] (0,3) 
      to [out=30,in=180] (4,4)
      to [out=0,in=150] (8,3);
      \draw [fill] (0,3) circle [radius=0.05] node [black,below=1] {$q_0^1$};
      \draw [fill] (8,3) circle [radius=0.05] node [black,below=1] {$q_1^1$};
      \draw [fill] (6,3.82) circle [radius=0.05] node [black,above=1] {\qquad\qquad $q_\theta^1 = \Pi_q (\Psi_{\Delta t}(q_\theta,p_\theta))$};
      \draw [fill] (2,3.82) circle [radius=0.05] node [black,below=1] {$q_{\theta_\star}^1$};
      \draw [->] (2,3.82) -- (1.7,5);
      \draw (1.7,5) node [black,right=1] {$M^{-1}\nabla \xi(q_{\theta_\star}^1)$};
      \draw [thick] (2,1) 
      to [out=-30,in=180] (4,0)
      to [out=0,in=210] (10,2);
      \draw [thick] (4,0)
      to [out=0,in=200] (11,1);
      \draw [fill] (10,2) circle [radius=0.05] node [black,below=1] {$q_1^2$};
      \draw [fill] (11,1) circle [radius=0.05] node [black,below=1] {$q_1$};
      \draw [fill] (8.5,1.15) circle [radius=0.05] node {};
      \draw (9.1,1.5) node [black,left=3] {$q_\theta^2 = \Pi_q (\Psi_{\Delta t} \circ \Psi_{\Delta t}(q_\theta,p_\theta))$};
      \draw [fill] (9,0.4) circle [radius=0.05] node {};
      \draw (9.2,0.2) node [black] {$q_\theta$};
      \draw [->] (9,0.4) -- (8.5,1.15);
      \draw (8.9,0.9) node [black] {$\tau_\theta$};
      \draw [fill] (2,1) circle [radius=0.05] node [black,above=0] {$q_0^2 = q_0$};
      \draw [fill] (4,0) circle [radius=0.05] node [black,below=1] {$q^2_{\theta_\star} = q_{\theta_\star}$};
      \draw [->] (4,0) -- (3.7,1.18); 
      \draw (3.7,1.18) node [black,right=1] {$\tau_\star$};
    \end{tikzpicture}
    \caption{\label{fig:illustration_proof} Schematic representation of the objects introduced in the proof of Lemma~\ref{lem:B}. The projection $\Pi_q$ is defined as $\Pi_q x = q$ for $x=(q,p) \in \R^d \times \R^d$.}
  \end{center}
\end{figure}

Define
\[
\theta_\star = \sup \Big\{ \theta \geq 0 \, \Big| \, \forall s \in [0,\theta], \ \left(\Psi_{\Delta t}\circ\Psi_{\Delta t}\right)(q_s,p_s) = (q_s,p_s) \Big\}.  
\]
In fact, the supremum is a maximum (by continuity of $\Psi_{\Delta t}\circ\Psi_{\Delta t}$ and of $\theta \mapsto (q_\theta,p_\theta)$), and $\theta_\star < 1$. By definition of the supremum, and introducing $(q_{\theta_n}^2,p_{\theta_n}^2) = (\Psi_{\Delta t}\circ \Psi_{\Delta t})(q_{\theta_n},p_{\theta_n})$, there exists a sequence $(\theta_n)_{n \geq 0}$ with values in $(\theta_\star,1)$ such that $\theta_n \to \theta_\star$ as $n \to +\infty$, and  
\begin{equation}
  \label{eq:not_involution_condition}
  \forall n \geq 0, \qquad  (q_{\theta_n}^2,p_{\theta_n}^2) \neq (q_{\theta_n},p_{\theta_n}). 
\end{equation}
Denoting by $(q^1_{\theta_n},-p_{\theta_n}^1) = \Psi_{\Delta t}(q_{\theta_n},p_{\theta_n})$ (so that $(q^2_{\theta_n},p_{\theta_n}^2) = \Psi_{\Delta t}(q^1_{\theta_n},-p^1_{\theta_n})$), there are Lagrange multipliers $\lambda_{\theta_n}^{1/2},\lambda_{\theta_n}^{1},\lambda_{\theta_n}^{3/2},\lambda_{\theta_n}^2$ such that
\begin{equation}
  \label{eq:expressions_q_p}
  \left\{ \begin{aligned}
    q_{\theta_n}^1 & = q_{\theta_n} + \Delta t M^{-1}\left( p_{\theta_n} - \frac{\Delta t}{2} \nabla V(q_{\theta_n})\right) + \Delta t M^{-1} \nabla \xi(q_{\theta_n}) \lambda_{\theta_n}^{1/2}, \\
    p_{\theta_n}^1 & = p_{\theta_n} - \frac{\Delta t}{2}\left( \nabla V(q_{\theta_n}) + \nabla V(q_{\theta_n}^1) \right) + \nabla \xi(q_{\theta_n}) \, \lambda_{\theta_n}^{1/2} + \nabla \xi(q_{\theta_n}^1) \, \lambda_{\theta_n}^{1}, \\
    q_{\theta_n}^2 & = q_{\theta_n}^1 - \Delta t M^{-1}\left( p_{\theta_n}^1 + \frac{\Delta t}{2} \nabla V(q_{\theta_n}^1)\right) + \Delta t M^{-1} \nabla \xi(q_{\theta_n}^1) \lambda_{\theta_n}^{3/2}, \\
    p_{\theta_n}^2 & = p_{\theta_n}^1 + \frac{\Delta t}{2}\left( \nabla V(q_{\theta_n}^1) + \nabla V(q_{\theta_n}^2) \right) - \nabla \xi(q_{\theta_n}^1) \, \lambda_{\theta_n}^{3/2} - \nabla \xi(q_{\theta_n}^2) \, \lambda_{\theta_n}^{2}.
  \end{aligned} \right.
\end{equation}
Using the first two equations above, it is possible to express $q_{\theta_n}$ in terms of $(q^1_{\theta_n},p_{\theta_n}^1)$ as follows:
\begin{equation}
  \label{eq:q_reformulated}
  q_{\theta_n} = q_{\theta_n}^1 - \Delta t M^{-1}\left( p_{\theta_n}^1 + \frac{\Delta t}{2} \nabla V(q_{\theta_n}^1)\right) + \Delta t M^{-1} \nabla \xi(q_{\theta_n}^1) \lambda_{\theta_n}^{1}.
\end{equation}

Note now that $(q^2_{\theta_n},p^2_{\theta_n}) \neq (q_{\theta_n},p_{\theta_n})$ implies that  $q^2_{\theta_n} \neq q_{\theta_n}$. Indeed, if this was not the case (namely if $q^2_{\theta_n} = q_{\theta_n}$), then the third equation in~\eqref{eq:expressions_q_p} and~\eqref{eq:q_reformulated} would imply that $M^{-1}\nabla \xi(q_{\theta_n}^1) \left(\lambda_{\theta_n}^{1} - \lambda_{\theta_n}^{3/2}\right) = 0$. Therefore, upon multiplying on the left by $\left[\nabla \xi(q_{\theta_n}^1)\right]^T$, 
\[
G_M(q_{\theta_n}^1)\left(\lambda_{\theta_n}^{1} -\lambda_{\theta_n}^{3/2}\right) = 0.
\]
The invertibility of $G_M$ allows to conclude that $\lambda_{\theta_n}^{1} = \lambda_{\theta_n}^{3/2}$. Then, since we assumed that $q^2_{\theta_n} = q_{\theta_n}$,
\[
p_{\theta_n}^2 - p_{\theta_n} = \nabla \xi(q_{\theta_n}) \left( \lambda_{\theta_n}^{1/2}-\lambda_{\theta_n}^{2}\right).
\]
Since $p_{\theta_n}^2-p_{\theta_n} \in T^*_{q_{\theta_n}} \mathcal{M}$, it holds $\left[\nabla \xi(q_{\theta_n})\right]^T M^{-1} (p_{\theta_n}^2-p_{\theta_n}) = 0 = G_M(q_{\theta_n}) ( \lambda_{\theta_n}^{1/2}-\lambda_{\theta_n}^{2})$, which shows that $\lambda_{\theta_n}^{1/2} = \lambda_{\theta_n}^{2}$. Therefore, $p_{\theta_n}^2 = p_{\theta_n}$, in contradiction with $(q^2_{\theta_n},p^2_{\theta_n}) \neq (q_{\theta_n},p_{\theta_n})$. This concludes the proof that $q^2_{\theta_n} \neq q_{\theta_n}$.

In view of~\eqref{eq:not_involution_condition} and the above argument, we can define the following family of unit vectors:
\[
\forall n \geq 0, \qquad \tau_{\theta_n} = \frac{q_{\theta_n}^2 - q_{\theta_n}}{\left\| q_{\theta_n}^2 - q_{\theta_n} \right\|},
\]
where $\|\cdot\|$ is the Euclidean norm on~$\mathbb{R}^d$. Note in particular that the denominator is positive. By compactness, and upon extraction (although, with some abuse of notation, we keep the same notation for the extracted subsequence), there exist a subsequence $(\theta_n)_{n \geq n_\star}$ and a unit vector $\tau_\star$ such that $\tau_{\theta_n} \to \tau_\star$ as $n \to +\infty$. By construction, $M \tau_\star$ belongs to $T_{q_{\theta_\star}}^* \mathcal{M}$, the cotangent space of $\mathcal M$ at $q_{\theta_\star}$ (see Figure~\ref{fig:illustration_proof}). The latter assertion follows from the fact that $q_{\theta_\star}^2 = q_{\theta_\star}$, so that $q_{\theta_n}^2 -q_{\theta_n} = \left(q_{\theta_n}^2 - q_{\theta_\star}^2\right) - \left(q_{\theta_n}-q_{\theta_\star}\right)$. Moreover, $(q_{\theta_n}-q_{\theta_\star})/\|q_{\theta_n}-q_{\theta_\star}\|$ converge to some element in $T^*_{q_{\theta_\star}} \mathcal{M}$ by definition of the tangent space, and so does $(q_{\theta_n}^2-q_{\theta_\star}^2)/\|q_{\theta_n}^2-q_{\theta_\star}^2\|$. This shows finally that if $\tau_{\theta_n}$ has a limit, this limit necessarily belongs to $T_{q_{\theta_\star}}^* \mathcal{M}$.

The expressions of $q^2_{\theta_n}$ and $q_{\theta_n}$ in~\eqref{eq:expressions_q_p}-\eqref{eq:q_reformulated} show that $\tau_{\theta_n} = M^{-1} \nabla \xi(q_{\theta_n}^1) \Lambda_{n}$ for some $\Lambda_{n} \in \mathbb{R}^m$. Note that the norm of $\Lambda_{n}$ is uniformly bounded away from~0 since $\|\tau_{\theta_n}\|=1$. Moreover, $\Lambda_{n} = G_M(q_{\theta_n}^1)^{-1} \left[\nabla \xi(q_{\theta_n}^1)\right]^T \tau_{\theta_n}$. Since $\tau_{\theta_n} \to \tau_\star$, the functions $G_M$ and $\nabla \xi$ are continuous, and $q_{\theta_n}^1 \to q^1_{\theta_\star}$ where $(q^1_{\theta_\star},p^1_{\theta_\star}) = \Psi_{\Delta t}(q_{\theta_\star},p_{\theta_\star})$ (because $\Psi_{\Delta t}$ is continuous and ($q_{\theta_n},p_{\theta_n}) \to (q_{\theta_\star},p_{\theta_\star})$), it follows that $\Lambda_n$ converges to some vector $\Lambda_\star$ in the limit $n \to \infty$, where 
\[
\Lambda_\star = G_M(q_{\theta_\star}^1)^{-1} \nabla \left[\xi(q_{\theta_\star}^1)\right]^T \tau_\star \in \mathbb{R}^d \backslash \{0\}, 
\qquad
\tau_\star = M^{-1} \nabla \xi(q_{\theta_\star}^1) \Lambda_\star. 
\]
Since $M \tau_\star$ belongs to $T_{q_{\theta_\star}}^* \mathcal{M}$, it holds $\left[\nabla \xi(q_{\theta_\star})\right]^T \tau_\star = 0 = \left[\nabla \xi(q_{\theta_\star})\right]^T M^{-1} \nabla \xi(q^1_{\theta_\star}) \Lambda_\star$. This in contradiction with the non-tangential projection property~\eqref{eq:non_tang_q^1} at~$(q,p)=(q_{\theta_\star}, p_{\theta_\star})$ (which is a direct consequence of~\eqref{eq:non_tang}) because $\Lambda_\star \neq 0$. The contradiction shows that there is no element $(q_1,p_1) \in C$ such that $(\Psi_{\Delta t}\circ \Psi_{\Delta t})(q_1,p_1) \neq (q_1,p_1)$, which concludes the proof of~\eqref{eq:PC}.
\end{proof}

\subsection{Examples of admissible Lagrange multiplier functions}
\label{sec:example}

We discuss in this section possible definitions of admissible Lagrange multiplier functions, with their associated ensembles~$A$ and~$B$. For simplicity, we assume in all this section that 
\begin{equation}\label{eq:Mcompact}
\exists \alpha >0, \qquad \{q \in \R^d, \|\xi(q)\| \le \alpha \} \text{ is compact}.
\end{equation}
In particular, $\mathcal{M}$ is compact.  In the case when $\mathcal{M}$ is not compact, one can obtain local versions of the results below by localization arguments (namely considering trajectories of the Markov chain up to the first time they leave a compact subset of $\mathcal{M}$). 

\subsubsection{Example 1: A local and theoretical projection based on the implicit function theorem}
\label{sec:ex1_}

A first way to construct a Lagrange multiplier function is to rely on the implicit function theorem to define it in a neighborhood of the submanifold $T^*\mathcal{M}$. The discussion here is reminiscent of what can be read for example in~\cite[Section~VII.1.3]{hairer-lubich-wanner-06} where the implicit function theorem is also used to prove the well-posedness and symmetry of RATTLE. However, our presentation differs from this work since we want to define the Lagrange multiplier function globally, and not only on a single timestep. This is necessary to check the properties stated in Definition~\ref{def:proj_sel}. Moreover, we also would like to possibly use large timesteps, and thus define $\Lambda(q,\tilde{q})$ for $\tilde{q}$ which is not in an arbitrarily small neighborhood of $q$.

\paragraph{Construction of the Lagrange multiplier function.}
Here and in the following, we use the following notation: for any $(q,\tilde{q}) \in \R^d \times \R^d$, we define the matrix
\begin{equation}
  \label{eq:GMqq}
  \mathcal{G}_M(q, \tilde{q})=[\nabla \xi(q)]^T M^{-1} \nabla \xi(\tilde{q}) \in \R^{m \times m}.
\end{equation}
Notice that this is simply the bilinear form associated with the quadratic form~$G_M$ defined in~\eqref{eq:GM}. The main result of this section is the following proposition. 

\begin{prop}
  \label{prop:proj_well_defined_}
  Assume that~\eqref{eq:Mcompact} holds. There exists an open subset $\mathcal D_{\rm imp}$ of $\mathcal M \times \R^d$ and an admissible (in the sense of Definition~\ref{def:proj_sel}) Lagrange multiplier function $\Lambda: \mathcal D_{\rm imp} \to \R^m$ such that the following properties hold:
\begin{itemize}
\item The set $\mathcal D_{\rm imp}$ contains $\mathcal E$ and $\Lambda$ is zero on $\mathcal E$ where
\begin{equation}\label{eq:E}
\mathcal E=\Big\{ (q,\tilde{q}) \in \mathcal M \times \mathcal M, \ \mathcal{G}_M(q,\tilde{q}) \text{ is invertible} \Big\};
\end{equation}
\item For any $(q,\tilde{q}) \in \mathcal D_{\rm imp}$, the matrix $\mathcal{G}_M(q,\tilde{q})$ is invertible;
\item For any $(q_0,\tilde{q}_0) \in \mathcal D_{\rm imp}$, there exists a neighborhood $\mathcal V_0$ of $(q_0,\tilde{q}_0)$ in $\mathcal D_{\rm imp}$ and a positive real number~$\alpha_0$ such that
  \begin{equation}
    \label{eq:lambda_minoree_}
\forall (q,\tilde{q}) \in \mathcal V_0, \,
\|\Lambda(q,\tilde q)\| < \alpha_0 \text{ and }    \forall \lambda \in \R^m \setminus \{\Lambda(q,\tilde{q})\}, \, \xi(\tilde{q}+M^{-1} \nabla \xi (q) \lambda)=0 \implies \|\lambda \| \ge \alpha_0.
  \end{equation}
\end{itemize}
\end{prop}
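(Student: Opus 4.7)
The backbone of the proof is the implicit function theorem applied to $F(q,\tilde q,\lambda)=\xi(\tilde q+M^{-1}\nabla\xi(q)\lambda)$, whose Jacobian in $\lambda$ equals $\partial_\lambda F(q,\tilde q,\lambda)=[\nabla\xi(\tilde q+M^{-1}\nabla\xi(q)\lambda)]^T M^{-1}\nabla\xi(q)$. At $\lambda=0$ and any $(q,\tilde q)\in\mathcal E$ one has $F(q,\tilde q,0)=\xi(\tilde q)=0$ since $\tilde q\in\mathcal M$, and $\partial_\lambda F=\mathcal G_M(q,\tilde q)^T$ is invertible by~\eqref{eq:E}. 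The IFT thus supplies, for every $(q_0,\tilde q_0)\in\mathcal E$, an open neighborhood $\mathcal V_{(q_0,\tilde q_0)}\subset\mathcal M\times\R^d$ and a $C^1$ solution $\Lambda_{\rm loc}:\mathcal V_{(q_0,\tilde q_0)}\to\R^m$ of $F=0$ with $\Lambda_{\rm loc}(q_0,\tilde q_0)=0$, unique in a small ball around $\lambda=0$.

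I would then enlarge the domain by continuation along the $C^1$ manifold
$\Sigma=\{(q,\tilde q,\lambda)\in\mathcal M\times\R^d\times\R^m:F(q,\tilde q,\lambda)=0,\ \mathcal G_M(q,\tilde q)\text{ and }\partial_\lambda F(q,\tilde q,\lambda)\text{ invertible}\}$, on which the IFT guarantees that the projection $\pi:(q,\tilde q,\lambda)\mapsto(q,\tilde q)$ is a local $C^1$ diffeomorphism to $\mathcal M\times\R^d$, and which contains $\mathcal E\times\{0\}$. I would define $\mathcal D_{\rm imp}$ as a maximal open subset of $\{(q,\tilde q)\in\mathcal M\times\R^d:\mathcal G_M(q,\tilde q)\text{ invertible}\}$ over which $\pi$ admits a single-valued $C^1$ section $(q,\tilde q)\mapsto(q,\tilde q,\Lambda(q,\tilde q))$ agreeing with the zero section over $\mathcal E$. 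Starting from $\bigcup_{(q_0,\tilde q_0)\in\mathcal E}\mathcal V_{(q_0,\tilde q_0)}$, on which the various $\Lambda_{\rm loc}$ coincide by local IFT uniqueness, one extends $\Lambda$ outwards as long as $\pi$ stays locally invertible and the chosen branch remains isolated from the other sheets of $\Sigma$. The inclusion $\mathcal E\subset\mathcal D_{\rm imp}$ with $\Lambda\equiv0$ on $\mathcal E$, the invertibility of $\mathcal G_M$ on $\mathcal D_{\rm imp}$, and the non-tangential condition~\eqref{eq:non_tang} (which is precisely invertibility of $\partial_\lambda F$ at $(q,\tilde q,\Lambda(q,\tilde q))$) are then built into the construction.

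For the local gap~\eqref{eq:lambda_minoree_}, I would argue by contradiction at a fixed $(q_0,\tilde q_0)\in\mathcal D_{\rm imp}$: if no uniform $\alpha_0$ worked, one could extract sequences $(q_n,\tilde q_n)\to(q_0,\tilde q_0)$ and $\lambda_n\neq\Lambda(q_n,\tilde q_n)$ with $\xi(\tilde q_n+M^{-1}\nabla\xi(q_n)\lambda_n)=0$ and $\|\lambda_n\|$ bounded (and in fact close to $\|\Lambda(q_0,\tilde q_0)\|$). Hypothesis~\eqref{eq:Mcompact} forces $\tilde q_n+M^{-1}\nabla\xi(q_n)\lambda_n$ to stay in a compact subset of $\mathcal M$, and the full-rank property of $\nabla\xi(q_0)$ then allows an extraction $\lambda_n\to\lambda_\star$ with $\xi(\tilde q_0+M^{-1}\nabla\xi(q_0)\lambda_\star)=0$. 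The case $\lambda_\star=\Lambda(q_0,\tilde q_0)$ contradicts the local uniqueness in the IFT at $(q_0,\tilde q_0,\Lambda(q_0,\tilde q_0))$, while the case $\lambda_\star\neq\Lambda(q_0,\tilde q_0)$ contradicts the construction of $\mathcal D_{\rm imp}$, which excludes configurations where a competing branch comes close to the selected one; taking any $\alpha_0$ strictly between $\|\Lambda(q_0,\tilde q_0)\|$ and the gap to the other roots then yields~\eqref{eq:lambda_minoree_} on a small enough neighborhood. The main obstacle I anticipate is precisely the second step: making the informal ``maximal single-valued extension'' of $\Lambda$ rigorous, since branches of the solution set of $F=0$ may merge, cross or disappear as $(q,\tilde q)$ varies (as illustrated in Figure~\ref{fig:illustration_non_C1}), so that $\mathcal D_{\rm imp}$ must be carefully trimmed near every branching or tangency locus to remain open and to keep $\Lambda$ genuinely $C^1$.
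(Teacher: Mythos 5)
Your first step (the implicit function theorem at points of $\mathcal E$, where $\lambda=0$ is a root and $\partial_\lambda F(q,\tilde q,0)=\mathcal G_M(q,\tilde q)^T$ is invertible) is exactly how the paper starts. The divergence, and the gap, lies in what you do next. The paper does \emph{not} attempt any continuation of the branch: it introduces the compact sets $\mathcal E_\zeta=\{(q,\tilde q)\in\mathcal M\times\mathcal M:\ \mathcal G_M(q,\tilde q)\text{ invertible},\ \|\mathcal G_M(q,\tilde q)^{-1}\|\le\zeta\}$, covers each by finitely many product balls supplied by the IFT, defines $\widetilde{\mathcal D}_\zeta$ as that finite union (shrunk so that $\|\Lambda\|<\tilde\alpha(\zeta)=\min_i\alpha_0(q_i,\tilde q_i)$ throughout), and sets $\mathcal D_{\rm imp}=\bigcup_{\zeta>0}\widetilde{\mathcal D}_\zeta\cap(\mathcal M\times\R^d)$. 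On each ball, $\Lambda(q,\tilde q)$ is, by the IFT, the \emph{unique} root in the ball $\mathcal B_m(0,\alpha_0(q_i,\tilde q_i))$ around $0$, so the local functions glue consistently and \eqref{eq:lambda_minoree_} is immediate with $\mathcal V_0=\mathcal D_{\zeta_0}$ and $\alpha_0=\tilde\alpha(\zeta_0)$. The price is that $\mathcal D_{\rm imp}$ is only a neighborhood of $\mathcal E$, so $\tilde q$ must stay close to $\mathcal M$; the benefit is that every claimed property is built in.

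Your continuation along $\Sigma$ is where the proof breaks. First, the ``maximal open subset over which $\pi$ admits a single-valued section'' is never constructed: branches can merge, disappear, or exhibit monodromy around loops, and you acknowledge yourself that you do not know how to trim the domain -- as written this is a plan, not an argument. Second, and more decisively, \eqref{eq:lambda_minoree_} is a statement about the \emph{norms} of the competing roots: every root other than $\Lambda(q,\tilde q)$ must satisfy $\|\lambda\|\ge\alpha_0>\|\Lambda(q,\tilde q)\|$, i.e.\ $\Lambda(q,\tilde q)$ must be the strictly smallest-norm root, uniformly on a neighborhood. A branch obtained by continuation far from $\mathcal E$ has no reason to enjoy this: at a point $(q_0,\tilde q_0)$ reached by continuation there may well exist another root $\lambda_\star$ with $\|\lambda_\star\|<\|\Lambda(q_0,\tilde q_0)\|$, well separated from the selected branch, so that $\pi$ remains a local diffeomorphism near the section and nothing in your construction excludes it. Your contradiction argument then fails precisely in its second case ($\lambda_\star\neq\Lambda(q_0,\tilde q_0)$), which you dismiss by appealing to a separation property that the construction does not provide; the first case is fine, since two distinct roots converging to a point where $\partial_\lambda F$ is invertible does contradict IFT uniqueness. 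To repair this while keeping a domain larger than the paper's, you would have to impose the smallest-norm-root property explicitly in the definition of $\mathcal D_{\rm imp}$ -- which is essentially what restricting to a neighborhood of $\mathcal E$, as the paper does, achieves for free.
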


Let us emphasize that we do not require $q$ and $\tilde{q}$ to be close one from another in this result. Notice also that a consequence of~\eqref{eq:lambda_minoree_} is that the Lagrange multiplier $\Lambda(q,\tilde{q})$ is simply the solution with the smallest norm to the following problem: find $\lambda \in \R^m$ such that $\xi(\tilde{q}+M^{-1} \nabla \xi (q) \lambda)=0$. 

Before proving Proposition~\ref{prop:proj_well_defined_}, let us give two examples of explicit subsets of $\mathcal D_{\rm imp}$. Here and in the following, we use the following notation: for any integer $k$, for any $q \in \R^k$ and $r>0$, we denote by $\mathcal B_k(q,r) \subset \R^k$ the ball centered at $q$ with radius $r$.
\begin{prop}\label{lem:DDtilde}
The two following properties hold:
\begin{enumerate}[(i)]
\item There exists a non increasing function $\delta: \R_+ \to \R_+^*$ such that, for any $(q, \tilde{q}) \in \mathcal M \times \R^d$, if $\mathcal{G}_M(q,\tilde{q})$ is invertible, and $\|\xi(\tilde{q})\| < \delta(\|\mathcal{G}_M(q,\tilde{q})^{-1}\|)$, then $(q,\tilde{q}) \in \mathcal D_{\rm imp}$.
\item There exists $\beta_0>0$ such that, for any $q \in \mathcal M$ and any $\tilde{q} \in \mathcal B_{d}(q,\beta_0)$, it holds $(q, \tilde{q}) \in \mathcal D_{\rm imp}$. Moreover, $\beta_0>0$ can be chosen such that there exists $\alpha_1>0$ such that,  for any $q \in \mathcal M$ and any $\tilde{q} \in \mathcal B_{d}(q,\beta_0)$,
 \begin{equation}
    \label{eq:lambda_minoree}
\|\Lambda(q,\tilde q)\| < \alpha_1 \text{ and }   \forall \lambda \in \R^m \setminus \{\Lambda(q,\tilde{q})\}, \, \xi(\tilde{q}+M^{-1} \nabla \xi (q) \lambda)=0 \implies 
    \|\lambda \| \ge \alpha_1.
  \end{equation}
\end{enumerate}
\end{prop}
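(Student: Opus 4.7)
The plan is to extract both (i) and (ii) from a quantitative implicit function argument applied to the map
\[
\Phi_{q,\tilde q}(\lambda) = \xi\bigl(\tilde q + M^{-1}\nabla \xi(q)\lambda\bigr),\qquad \lambda \in \R^m,
\]
which is the natural object underlying the construction of $\Lambda$ in Proposition~\ref{prop:proj_well_defined_}. Its Jacobian at $\lambda=0$ is $[\nabla \xi(\tilde q)]^T M^{-1}\nabla \xi(q) = \mathcal{G}_M(q,\tilde q)^T$ (using symmetry of $M^{-1}$), hence invertible precisely when $\mathcal{G}_M(q,\tilde q)$ is, with the same inverse norm. Once $\Lambda(q,\tilde q)$ is produced as a suitable root of $\Phi_{q,\tilde q}$ close to $0$, the projection property is just $\Phi_{q,\tilde q}(\Lambda(q,\tilde q))=0$, and the non-tangential property~\eqref{eq:non_tang} follows by continuity of $\nabla \xi$ once $\Lambda(q,\tilde q)$ is small enough.

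For (i), I would apply a Newton--Kantorovich estimate with initial guess $\lambda_0 = 0$ and initial residual $\xi(\tilde q)$. Let $K$ denote a uniform bound on $\|\nabla \xi\|$ and $\|\nabla^2 \xi\|$ on the compact slab $\{\|\xi\|\le\alpha\}$ provided by~\eqref{eq:Mcompact}. A standard computation shows that the Newton iteration converges quadratically to a unique zero of $\Phi_{q,\tilde q}$ inside a ball $\mathcal B_m(0,r)$ of radius $r \sim 1/(K\|\mathcal G_M(q,\tilde q)^{-1}\|)$, provided $\|\xi(\tilde q)\|$ is smaller than a threshold of the form $c/(K\,\|\mathcal G_M(q,\tilde q)^{-1}\|^2)$. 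Defining $\delta$ to be this explicit threshold yields a non-increasing function of $\|\mathcal G_M(q,\tilde q)^{-1}\|$. Taking $\Lambda(q,\tilde q)$ to be this unique zero, the usual implicit function theorem applied locally delivers $C^1$ regularity and $(q,\tilde q)\in \mathcal D_{\rm imp}$; uniqueness inside the Newton ball, combined with the local continuity of its radius in $(q,\tilde q)$, produces the neighborhood $\mathcal V_0$ and the gap constant $\alpha_0$ in~\eqref{eq:lambda_minoree_}.

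For (ii), I would upgrade the pointwise statement of (i) into a uniform one using compactness of $\mathcal M$. Since $G_M$ is continuous and invertible on $\mathcal M$, the quantity $C_0 := \sup_{q\in\mathcal M}\|G_M(q)^{-1}\|$ is finite; by uniform continuity of $\mathcal G_M$ on a compact neighborhood of the diagonal $\{(q,q) : q\in\mathcal M\}$ and of $\xi$ on $\mathcal M$, one can pick $\beta_0 > 0$ so that for every $q\in\mathcal M$ and every $\tilde q\in\mathcal B_d(q,\beta_0)$ one has simultaneously $\|\mathcal G_M(q,\tilde q)^{-1}\| \le 2 C_0$ and $\|\xi(\tilde q)\| = \|\xi(\tilde q) - \xi(q)\| < \delta(2C_0)$. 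Part (i) then forces $(q,\tilde q)\in \mathcal D_{\rm imp}$, and the Newton ball radius is bounded below by a constant depending only on $C_0$ and $K$, which provides the uniform $\alpha_1$ in~\eqref{eq:lambda_minoree}.

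The main obstacle is keeping the Newton--Kantorovich estimates explicit and monotone in the single quantity $\|\mathcal G_M(q,\tilde q)^{-1}\|$, so that $\delta$ in (i) is genuinely a non-increasing function of that scalar; this reduces to carefully tracking the Lipschitz constants of $\Phi_{q,\tilde q}$ and of $\lambda \mapsto D\Phi_{q,\tilde q}(\lambda)^{-1}$ through the uniform bounds coming from~\eqref{eq:Mcompact}. Everything else is routine implicit-function-theorem bookkeeping combined with a compactness argument on $\mathcal M$.
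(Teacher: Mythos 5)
Your argument is essentially correct, but it takes a genuinely different route from the paper. The paper's proof is entirely ``soft'': for item (i) it argues by contradiction, extracting from a hypothetical sequence of bad pairs $(q_n,\tilde q_n)$ (with $\|\xi(\tilde q_n)\|\le 1/n$, $\|\mathcal G_M(q_n,\tilde q_n)^{-1}\|\le\zeta$, $(q_n,\tilde q_n)\notin\mathcal D_\zeta$) a limit point which must lie in $\mathcal E_\zeta\subset\mathcal D_\zeta$, contradicting the openness of $\mathcal D_\zeta$; item (ii) is handled the same way along the diagonal, and the uniform $\alpha_1$ is obtained by extracting a finite subcover of $\mathcal M\times\mathcal M$ by the neighborhoods $\mathcal V_0(q)$ furnished by Proposition~\ref{prop:proj_well_defined_}. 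This yields no explicit $\delta$ but reuses for free the structure already built in Lemma~\ref{prop:proj_well_defined__}. Your Newton--Kantorovich argument instead produces quantitative, monotone thresholds, which is arguably more informative (and closer to what the practical Newton solver of Section~\ref{sec:ex2} actually does); the price is the bookkeeping you acknowledge, plus the care needed so that the threshold is a continuous, hence openness-preserving, function of $\|\mathcal G_M(q,\tilde q)^{-1}\|$ (a non-increasing but discontinuous $\delta$ would not automatically give an open domain).

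One caveat you should address explicitly: the statement is about the \emph{specific} set $\mathcal D_{\rm imp}$ and function $\Lambda$ constructed in Proposition~\ref{prop:proj_well_defined_}, namely a union over $\zeta$ of finite unions of product balls centered at points of $\mathcal E_\zeta$. Your Kantorovich estimate shows that $\Phi_{q,\tilde q}$ has a unique small root depending $C^1$ on $(q,\tilde q)$; it does not by itself show that $(q,\tilde q)$ lies in that particular union of product balls. So either you must add an argument of the paper's type (the limit-point/openness contradiction) to land in the paper's $\mathcal D_{\rm imp}$, or you must \emph{redefine} $\mathcal D_{\rm imp}$ as the Newton--Kantorovich domain and re-verify all the conclusions of Proposition~\ref{prop:proj_well_defined_} for it (containment of $\mathcal E$ with $\Lambda=0$ there, invertibility of $\mathcal G_M$, the local gap property~\eqref{eq:lambda_minoree_}, and the non-tangential condition~\eqref{eq:non_tang}). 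You sketch the second option, and it is viable --- local uniqueness of the small root guarantees the two constructions of $\Lambda$ agree wherever both are defined --- but as written the step ``hence $(q,\tilde q)\in\mathcal D_{\rm imp}$'' conflates the two sets. With that clarified, both (i) and (ii) follow as you describe, and your derivation of the uniform $\alpha_1$ in~\eqref{eq:lambda_minoree} from a lower bound on the Kantorovich uniqueness radius over $\tilde q\in\mathcal B_d(q,\beta_0)$ is a clean substitute for the paper's finite-subcover argument.
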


Let us emphasize that in the first item, we do not assume that $q$ and $\tilde{q}$ are close (think for example of points on opposite sides of a sphere), but $\tilde{q}$ should be sufficiently close from the submanifold and the inverse of the matrix $\mathcal{G}_M(q,\tilde{q})$ should be not too large in order for the projection to be well defined. This first item will be useful to formalize the practical approach to define the projection, using Newton's algorithm (see Section~\ref{sec:ex2}). The second item means that the projection can be defined in open balls of uniform sizes around any point on the submanifold. This will be useful to analyze the properties of the numerical scheme based on the Lagrange multiplier function $\Lambda$ introduced here, see Section~\ref{sec:rattle_imp}.

\paragraph{Proofs of Propositions~\ref{prop:proj_well_defined_} and~\ref{lem:DDtilde}.}

The end of this section is devoted to the proofs of  Propositions~\ref{prop:proj_well_defined_} and~\ref{lem:DDtilde}.
The proof of Proposition~\ref{prop:proj_well_defined_} requires a preliminary result which shows that it is possible to construct the Lagrange multiplier function on a larger open set~$\widetilde{\mathcal{D}}_\zeta$ where both $q$ and $\tilde{q}$ are in a neighborhood of~$\mathcal{M}$ (but not necessarily close one from another) and such that $\mathcal{G}_M(q,\tilde{q})$ is invertible. 

\begin{lem}
  \label{prop:proj_well_defined__}
  Assume that~\eqref{eq:Mcompact} holds. For any positive $\zeta$, define
  \begin{equation}\label{eq:E_zeta}
    \mathcal E_\zeta=\left\{(q,\tilde{q}) \in \mathcal M \times \mathcal M, \ \mathcal{G}_M(q,\tilde{q}) \text{ is invertible and } \left\| \mathcal{G}_M(q,\tilde{q})^{-1}\right\| \le \zeta \right\},
\end{equation}
  which is non-empty for $\zeta \geq \min_{\mathcal{M}} \|G_M^{-1}\|$.
  There exists an open set $\widetilde{\mathcal D}_{\zeta}$ of $\R^d \times \R^d$ such that $\mathcal E_\zeta \subset \widetilde{\mathcal D}_{\zeta}$, and a smooth Lagrange multiplier function $\Lambda: \widetilde{\mathcal D}_{\zeta} \to \R^m$ such that~\eqref{eq:Pi} and~\eqref{eq:non_tang} are satisfied with $\mathcal{D}$ replaced by~$\widetilde{\mathcal D}_\zeta$. Moreover, $\widetilde{\mathcal D}_{\zeta}$ can be chosen such that there exists $\widetilde \alpha(\zeta) > 0$ for which, for any $(q,\tilde{q}) \in \tilde{\mathcal D}_{\zeta}$, it holds 
  \begin{equation}
    \label{eq:lambda_minoree__}
\|\Lambda(q,\tilde q)\| < \tilde \alpha(\zeta) \text{ and }    \forall \lambda \in \R^m \setminus \{\Lambda(q,\tilde{q})\}, \, \xi(\tilde{q}+M^{-1} \nabla \xi (q) \lambda)=0 \implies 
    \|\lambda \| \ge \tilde \alpha(\zeta).
  \end{equation}
\end{lem}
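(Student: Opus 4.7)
The plan is to apply the implicit function theorem (IFT) pointwise on $\mathcal{E}_\zeta$ and to glue the resulting local branches into a single smooth Lagrange multiplier function, using the compactness of $\mathcal{E}_\zeta$ together with the local uniqueness built into IFT.

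First I would introduce the smooth auxiliary map
\begin{equation*}
F(q,\tilde{q},\lambda) = \xi\bigl(\tilde{q} + M^{-1}\nabla\xi(q)\lambda\bigr),
\end{equation*}
defined on a neighborhood of $\mathcal{E}_\zeta\times\{0\}$ in $\R^d\times\R^d\times\R^m$. For any base point $(q_0,\tilde{q}_0)\in\mathcal{E}_\zeta$, one has $F(q_0,\tilde{q}_0,0)=\xi(\tilde{q}_0)=0$ and
\begin{equation*}
\partial_\lambda F(q_0,\tilde{q}_0,0) = [\nabla\xi(\tilde{q}_0)]^T M^{-1}\nabla\xi(q_0) = \mathcal{G}_M(q_0,\tilde{q}_0)^T,
\end{equation*}
which is invertible with inverse of operator norm at most $\zeta$ (the Euclidean operator norm is invariant under transposition). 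The IFT then yields, at each base point, an open neighborhood $U_{(q_0,\tilde{q}_0)}\subset\R^d\times\R^d$, a radius $r_{(q_0,\tilde{q}_0)}>0$ and a smooth map $\Lambda_{(q_0,\tilde{q}_0)}:U_{(q_0,\tilde{q}_0)}\to\R^m$ with $\Lambda_{(q_0,\tilde{q}_0)}(q_0,\tilde{q}_0)=0$, $F(q,\tilde{q},\Lambda_{(q_0,\tilde{q}_0)}(q,\tilde{q}))=0$, and characterized as the unique solution of $F(q,\tilde{q},\cdot)=0$ inside $\{\|\lambda\|<r_{(q_0,\tilde{q}_0)}\}$. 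Using a standard quantitative version of IFT, together with the uniform bound on $\|(\mathcal{G}_M^T)^{-1}\|$ on $\mathcal{E}_\zeta$ and the boundedness of $\nabla^2 F$ on compact sets (recall~\eqref{eq:Mcompact}), one can further assume that $r_{(q_0,\tilde{q}_0)}\ge\rho_0$ for a common $\rho_0=\rho_0(\zeta)>0$.

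Next I would exploit the compactness of $\mathcal{E}_\zeta$ (closed in the compact set $\mathcal{M}\times\mathcal{M}$ by continuity of $\mathcal{G}_M$). Shrinking each $U_{(q_0,\tilde{q}_0)}$ to an open set $V_{(q_0,\tilde{q}_0)}$ still containing $(q_0,\tilde{q}_0)$ and on which $\|\Lambda_{(q_0,\tilde{q}_0)}\|<\rho_0/2$ (possible by continuity, since $\Lambda_{(q_0,\tilde{q}_0)}$ vanishes at $(q_0,\tilde{q}_0)$), the family $\{V_{(q_0,\tilde{q}_0)}\}_{(q_0,\tilde{q}_0)\in\mathcal{E}_\zeta}$ still covers $\mathcal{E}_\zeta$. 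Extracting a finite subcover $\{V_i\}_{i=1}^N$ with associated $U_i$ and $\Lambda_i$, one sets
\begin{equation*}
\widetilde{\mathcal{D}}_\zeta = \bigcup_{i=1}^N V_i, \qquad \tilde\alpha(\zeta) = \rho_0/2.
\end{equation*}
On any overlap $V_i\cap V_j$, the values of $\Lambda_i$ and $\Lambda_j$ both lie in $\{\|\lambda\|<\tilde\alpha(\zeta)\}\subset\{\|\lambda\|<\rho_0\}$ and solve $F=0$, so they coincide by the IFT uniqueness clause in $U_i\times\{\|\lambda\|<\rho_0\}$. This produces a well-defined smooth map $\Lambda:\widetilde{\mathcal{D}}_\zeta\to\R^m$ with $\|\Lambda\|<\tilde\alpha(\zeta)$ throughout.

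Finally, the projection property~\eqref{eq:Pi} is built into the definition of $F$. The non-tangential property~\eqref{eq:non_tang} reduces at $(q_0,\tilde{q}_0)\in\mathcal{E}_\zeta$ to invertibility of $\mathcal{G}_M(q_0,\tilde{q}_0)^T$ (since $\Lambda$ vanishes there), and extends to a neighborhood by continuity of $\nabla\xi$ and $\Lambda$; a further shrinking of each $V_i$ enforces it on all of $\widetilde{\mathcal{D}}_\zeta$. The lower bound in~\eqref{eq:lambda_minoree__} is then the contrapositive of IFT uniqueness: any $\lambda\ne\Lambda(q,\tilde{q})$ with $F(q,\tilde{q},\lambda)=0$ must satisfy $\|\lambda\|\ge\rho_0=2\tilde\alpha(\zeta)>\tilde\alpha(\zeta)$. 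The main technical obstacle is precisely this uniformity across $\mathcal{E}_\zeta$: without a common lower bound $\rho_0$ on the IFT radii and the preliminary shrinking that forces every local branch well below it, the patching on overlaps cannot be made coherent, since two nearby IFT branches could a priori select distinct solutions of $\xi(\tilde{q}+M^{-1}\nabla\xi(q)\lambda)=0$.
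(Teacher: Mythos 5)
Your proof is correct and follows essentially the same route as the paper: the implicit function theorem applied at each point of the compact set $\mathcal E_\zeta$, a finite subcover, gluing of the local branches via the local uniqueness clause, and a shrinking step to obtain the uniform bound $\tilde\alpha(\zeta)$ in~\eqref{eq:lambda_minoree__}. The only difference is one of ordering: you secure a uniform implicit-function radius $\rho_0$ and shrink the patches \emph{before} gluing, which makes the consistency of the branches on overlaps slightly more explicit than in the paper, where the finite cover is extracted first, well-definedness on overlaps is deduced from~\eqref{eq:lambda_iff}, and the domain is reduced afterwards.
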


\begin{proof}
Notice that for $\zeta>0$, the set $\mathcal E_\zeta$ defined by~\eqref{eq:E_zeta} is a compact set. Moreover, it is non empty for sufficiently large~$\zeta$, since, for any $q \in \mathcal M$, $\mathcal{G}_M(q,q)=G_M(q)$ is invertible (see~\eqref{eq:GM}), so that $(q,q) \in \mathcal E_\zeta$ for any $\zeta \ge \|G_M(q)^{-1}\| $. Consider now, for a fixed $\zeta>0$, a couple $(q_0,\tilde{q}_0) \in \mathcal E_\zeta$ and let us introduce the function 
\[
F:\left\{\begin{aligned}
\R^d \times \R^d \times \R^m &\to \R^m\\
(q,\tilde{q},\lambda) &\mapsto \xi(\tilde{q}+M^{-1} \nabla \xi(q) \lambda).
\end{aligned}
\right.
\]
This is a smooth function such that $F(q_0,\tilde{q}_0,0)=0$, and
\[
\frac{\partial F}{\partial \lambda}(q_0,\tilde{q}_0,0)=[\nabla \xi (\tilde{q}_0)]^T M^{-1} \nabla \xi(q_0)
\]
is invertible by assumption. Therefore, by the implicit function theorem, there exist $\eta_0(q_0,\tilde{q}_0)>0$, $\alpha_0(q_0,\tilde{q}_0)>0$ and a smooth function $\Lambda:\mathcal B_d(q_0,\eta_0(q_0,\tilde{q}_0)) \times \mathcal B_d(\tilde{q_0},\eta_0(q_0,\tilde{q}_0)) \to \mathcal B_m(0,\alpha_0(q_0,\tilde{q}_0))$ such that, for all $(q,\tilde{q}) \in \mathcal B_d(q_0,\eta_0(q_0,\tilde{q}_0)) \times \mathcal B_d(\tilde{q_0},\eta_0(q_0,\tilde{q}_0))$ and all $\lambda \in  \mathcal B_m(0,\alpha_0(q_0,\tilde{q}_0))$,
\begin{equation}\label{eq:lambda_iff}
\xi(\tilde{q}+M^{-1} \nabla \xi(q) \lambda)=0 \iff \lambda=\Lambda(q,\tilde{q}).
\end{equation}
By a continuity argument, upon reducing $\eta_0(q_0,\tilde{q}_0)$, one can assume that the non-tangential projection property~\eqref{eq:non_tang} holds:
for all $(q,\tilde{q}) \in \mathcal B_d(q_0,\eta_0(q_0,\tilde{q}_0)) \times \mathcal B_d(\tilde{q_0},\eta_0(q_0,\tilde{q}_0))$,
\[
\left[\nabla \xi\left( \tilde{q} + M^{-1} \nabla \xi(q) \Lambda(q,\tilde{q})\right)\right]^T M^{-1} \nabla \xi(q) \text{ is invertible},
\]
since this holds for $(q,\tilde{q})=(q_0,\tilde{q}_0)$.

Remark now that $\dps \bigcup_{(q_0,\tilde{q}_0) \in \mathcal E_\zeta} \mathcal B_d(q_0,\eta_0(q_0,\tilde{q}_0)) \times \mathcal B_d(\tilde{q}_0,\eta_0(q_0,\tilde{q}_0))$ is an open cover of the compact set $\mathcal E_\zeta$, from which it is possible to extract a finite cover:
\[
\mathcal E_\zeta \subset \bigcup_{i=1}^n \mathcal B_d(q_i,\eta_0(q_i,\tilde{q}_i)) \times \mathcal B_d(\tilde{q_i},\eta_0(q_i,\tilde{q}_i)),
\]
where $(q_i,\tilde{q}_i)_{i=1, \ldots,n}$ are elements of $\mathcal E_\zeta$. This motivates introducing the open set
\[
\widetilde{\mathcal D}_{\zeta} = \bigcup_{i=1}^n \mathcal B_d(q_i,\eta_0(q_i,\tilde{q}_i)) \times \mathcal B_d(\tilde{q_i},\eta_0(q_i,\tilde{q}_i)).
\]
Notice that, by the uniqueness result~\eqref{eq:lambda_iff}, the function $\Lambda:\widetilde{\mathcal D}_{\zeta} \to \R^m$ is well defined and smooth. Moreover, by construction, the properties~\eqref{eq:Pi} and~\eqref{eq:non_tang} are satisfied with $\mathcal{D}$ replaced by~$\widetilde{\mathcal D}_\zeta$.

Let us now define
\[
\tilde \alpha(\zeta) = \min_{i \in \{1,\ldots,n\}} \alpha_0(q_i,\tilde{q}_i) > 0.
\]
Up to reducing $\widetilde{\mathcal D}_{\zeta}$ to a smaller open set (still containing $\mathcal E_\zeta$), one can assume that for all $(q,\tilde{q}) \in \widetilde{\mathcal D}_{\zeta}$, it holds $\Lambda(q,\tilde{q}) \in \mathcal B_m(0,\tilde \alpha(\zeta))$ (this is a simple consequence of the smoothness of $\Lambda$, and the fact that $\Lambda=0$ on $\mathcal E_\zeta$). The property~\eqref{eq:lambda_minoree__} then follows from~\eqref{eq:lambda_iff}. Let us emphasize that we do not indicate the dependency of $\Lambda$ on $\zeta$, since, by the property~\eqref{eq:lambda_iff}, for any $\zeta \neq \zeta'$, the two functions $\Lambda$ defined on $\widetilde{\mathcal D}_\zeta$ and $\widetilde{\mathcal D}_{\zeta'}$ coincide on $\widetilde{\mathcal D}_\zeta \cap \widetilde{\mathcal D}_{\zeta'}$.
\end{proof}

We are now in position to prove Proposition~\ref{prop:proj_well_defined_}.

\begin{proof}[Proof of Proposition~\ref{prop:proj_well_defined_}]
Let us define, for any $\zeta > 0$, the set
\begin{equation}\label{eq:Dzeta}
{\mathcal D}_\zeta = \left\{ (q,\tilde{q}) \in \mathcal M \times \R^d, \, (q,\tilde{q}) \in \widetilde{\mathcal D}_{\zeta} \right\} = \widetilde{\mathcal D}_{\zeta} \cap \left(\mathcal{M} \times \R^d\right),
\end{equation}
and
\[
\mathcal D_{\rm imp} = \bigcup_{\zeta > 0} {\mathcal D}_\zeta.
\]
The set ${\mathcal D}_{\rm imp}$ is open in $\mathcal M \times \R^d$ as the union of open sets in $\mathcal M \times \R^d$. Notice that by construction, ${\mathcal D}_{\rm imp}$ contains the set $\mathcal E$ defined in~\eqref{eq:E}, since $\mathcal E=\cup_{\zeta >0} \mathcal E_\zeta$.

Let us now consider $(q_0,\tilde{q}_0) \in {\mathcal D}_{\rm imp}$, and let $\zeta_0 > 0$ be such that $(q_0,\tilde{q}_0) \in {\mathcal D}_{\zeta_0}$. The Lagrange multiplier $\Lambda(q_0,\tilde{q}_0)$ is then defined by the result of Lemma~\ref{prop:proj_well_defined__}. Notice in particular that it does not depend on the choice of $\zeta $ such that $(q_0,\tilde{q}_0) \in {\mathcal D}_\zeta$. The property~\eqref{eq:lambda_minoree_} follows from~\eqref{eq:lambda_minoree__} by considering $\mathcal V_0=\mathcal D_{\zeta_0}$ and $\alpha_0=\tilde \alpha(\zeta_0)$.
\end{proof}

 Let us conclude this section with the proof of Proposition~\ref{lem:DDtilde}, which is essentially based on compactness arguments.

\begin{proof}[Proof of Proposition~\ref{lem:DDtilde}]
Let us prove that for any $\zeta > 0$, there exists $\tilde \delta(\zeta)>0$ such that for any $(q,\tilde q) \in \mathcal M \times \R^d$, if $\mathcal{G}_M(q,\tilde q)$ is invertible, $\| \mathcal{G}_M(q,\tilde q)^{-1}\| \le \zeta$, and $\|\xi(\tilde q)\| \le \tilde \delta(\zeta)$, then $(q,\tilde{q}) \in \mathcal D_\zeta$, where $\mathcal D_\zeta$ is defined by~\eqref{eq:Dzeta}. This obviously implies the first item of Proposition~\ref {lem:DDtilde} by considering
$$\delta(\zeta)=\sup_{\tilde \zeta > \zeta} \tilde\delta (\tilde \zeta).$$
We proceed by contradiction. Assume that, for some $\zeta > 0$, there exists a sequence $(q_n, \tilde{q}_n)_{n \ge 1}$ with values in $\mathcal M \times \R^d$ such that 
\[
\forall n \geq 1, \qquad \|\xi(\tilde{q}_n)\| \le \frac 1 n, \, \left\| \mathcal{G}_M(q_n,\tilde q_n)^{-1} \right\| \le \zeta \text{ and } (q_n, \tilde{q}_n) \not \in {\mathcal D}_\zeta.
\]
Then, by a compactness argument (recall~\eqref{eq:Mcompact}), and up to the extraction of a subsequence, one can assume that $\lim_{n \to \infty} (q_n,\tilde{q}_n) = (q,\tilde{q}) \in \mathcal M \times \R^d$. Moreover, by continuity, $\xi(\tilde{q})=0$, $\left\|\mathcal{G}_M(q,\tilde q)^{-1} \right\| \le \zeta$ and, since ${\mathcal D}_\zeta$ is an open subset of $\mathcal M \times \R^d$,  $(q, \tilde{q}) \not \in {\mathcal D}_\zeta$. This is in contradiction with the fact that $(q,\tilde{q}) \in \mathcal E_\zeta \subset {\mathcal D}_\zeta$.

To prove the first part of the second item, let us first recall that  for all $q \in \mathcal M$, it holds $(q,q) \in \mathcal D_{\rm imp}$  (remember the invertibility assumption on the matrix $G_M(q)$ introduced in~\eqref{eq:GM}, and the first item in Proposition~\ref{prop:proj_well_defined_}). Let us now argue by contradiction. Assume that, for any $n \ge 1$, there exist $q_n \in \mathcal M$ and $\tilde{q}_n \in \mathcal B_d(q_n, \frac 1 n)$ such that $(q_n, \tilde{q}_n) \not \in \mathcal D_{\rm imp}$. By a compactness argument, and upon extracting a subsequence, one can assume that $\lim_{n \to \infty} q_n= \lim_{n \to \infty} \tilde{q}_n=q_\infty \in \mathcal M$. Since $\mathcal D_{\rm imp}$ is open in $\mathcal M \times \R^d$, necessarily, $(q_\infty,q_\infty) \not \in \mathcal D_{\rm imp}$, which is in contradiction with the fact that $(q,q) \in \mathcal D_{\rm imp}$ for all $q \in \mathcal{M}$. This shows that there exists $\beta_0>0$ such that, for any $q \in \mathcal M$ and any $\tilde{q} \in \mathcal B_{d}(q,\beta_0)$, it holds $(q, \tilde{q}) \in \mathcal D_{\rm imp}$.

Now, for any $q \in \mathcal M$, one has $(q,q) \in \mathcal D_{\rm imp}$ and thus, from Proposition~\ref{prop:proj_well_defined_}, there exists a neighborhood $\mathcal V_0(q)$ of $(q,q)$ in $\mathcal M \times \R^d$ such that $\mathcal V_0(q) \subset \mathcal D_{\rm imp}$, together with an associated positive real number~$\alpha_0(q)$ such that~\eqref{eq:lambda_minoree_} holds. One can assume without loss of generality that $\mathcal V_0(q)=\{ (r,\tilde{r}) \in \mathcal M \times \R^d,  r \in  \mathcal B_d(q,\varepsilon(q)), \tilde{r} \in \mathcal B_d(r,\varepsilon(q))\}$ for some positive $\varepsilon(q)$. The set $\cup_{q \in \mathcal M} \mathcal V_0(q)$ is an open cover of the compact set $\mathcal M \times \mathcal M$ from which one can extract a finite cover:
$\mathcal M \times \mathcal M \subset \cup_{i=1}^n \mathcal V_0(q_i)$.  Let us introduce $\alpha_1= \min_{i=1, \ldots n} \alpha_0(q_i)$.
 Upon further reducing $\beta_0>0$, one can assume that for all $q \in \mathcal M$ and for all $\tilde q \in \mathcal B_d(q,\beta_0)$, it holds $(q, \tilde q) \in \cup_{i=1}^n \mathcal V_0(q_i)$ and $\|\Lambda(q,\tilde q)\| < \alpha_1$. The property~\eqref{eq:lambda_minoree} then follows from~\eqref{eq:lambda_minoree_}.
\end{proof}

\subsubsection{Example 1: Properties of the numerical scheme based on the implicit function theorem}\label{sec:rattle_imp}

From the set $\mathcal D_{\rm imp}$ introduced in Proposition~\ref{prop:proj_well_defined_}, and for a given timestep $\Delta t>0$, we can now define the associated set~$A_{\rm imp}$ by~\eqref{eq:def_A}:
\begin{equation}
  \label{eq:def_A_eps_}
  A_{\rm imp} = \left\{ (q,p) \in T^*\mathcal{M}, \, \ \left(q, q+ \Delta t \, M^{-1} \left[p - \frac{\Delta t}{2} \nabla V (q) \right] \right) \in \cD_{\rm imp}  \right\},
\end{equation}
and the associated set  $B_{\rm imp}$ by~\eqref{eq:B}:
\begin{equation}
  \label{eq:B_eps}
  B_{\rm imp}= \Big\{ (q,p) \in A_{\rm imp} \cap \Psi_{\Delta t}^{-1}(A_{\rm imp}), \ (\Psi_{\Delta t}\circ \Psi_{\Delta t})(q,p) = (q,p) \Big\}.
\end{equation}
The function $\Psi_{\Delta t}$ in~\eqref{eq:B_eps} is the one defined in Section~\ref{sec:Psi_dt}, with the Lagrange multiplier function defined by the implicit function theorem as introduced in the previous section.

\paragraph{On the sets $A_{\rm imp}$ and $B_{\rm imp}$.}
 The next lemma shows that the admissible Lagrange multiplier function introduced in the previous section provides a framework in which the set $B$ is non empty.

\begin{lem}
  \label{lem:A_B_non_empty}
  Assume that~\eqref{eq:Mcompact} holds, and let us consider the Lagrange multiplier function introduced in Section~\ref{sec:ex1_}. There exists $\Delta t_0 > 0$ such that, for all $\Delta t \in (0, \Delta t_0)$ and for all $q \in \mathcal{M}$, there is $p \in T^*_q \mathcal M$ for which $(q,p) \in A_{\rm imp}$ and $\Psi_{\Delta t}(q,p) = (q,p)$. In particular, $A_{\rm imp} \cap \Psi_{\Delta t}^{-1}(A_{\rm imp}) \neq \emptyset$ and $B_{\rm imp} \neq \emptyset$ for all $\Delta t \in (0, \Delta t_0)$.
\end{lem}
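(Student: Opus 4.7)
The plan is to produce, for each $q \in \mathcal{M}$, an explicit momentum $p \in T_q^*\mathcal{M}$ such that $(q,p)$ is a fixed point of $\Psi_{\Delta t}$. Once this is done, membership in $B_{\rm imp}$ follows for free from~\eqref{eq:B_eps}: indeed $\Psi_{\Delta t}(q,p) = (q,p) \in A_{\rm imp}$ places $(q,p)$ in $A_{\rm imp} \cap \Psi_{\Delta t}^{-1}(A_{\rm imp})$, and $(\Psi_{\Delta t}\circ\Psi_{\Delta t})(q,p)=(q,p)$ is automatic. The guiding observation is that $\Psi_{\Delta t}(q,p) = (q,p)$ forces $q^1 = q$, which forces the half-step momentum $p^{1/2}$ to vanish; running this backwards from $p^{1/2}=0$ in the RATTLE equations~\eqref{eq:RATTLE_num} suggests the candidate
\[
p := \frac{\Delta t}{2}\, \Pi_{T_q^*\mathcal{M}} \nabla V(q) \in T_q^*\mathcal{M}.
\]

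For this $p$, I would first compute $\tilde q = q + \Delta t\, M^{-1}\bigl(p - \tfrac{\Delta t}{2}\nabla V(q)\bigr) = q - \tfrac{\Delta t^2}{2} M^{-1}\nabla\xi(q) G_M(q)^{-1}[\nabla\xi(q)]^T M^{-1}\nabla V(q)$ and observe that $\|\tilde q - q\| \le C\Delta t^2$ uniformly in $q\in\mathcal{M}$, by compactness of $\mathcal{M}$ (assumption~\eqref{eq:Mcompact}) and continuity of $\nabla V$, $\nabla\xi$, $G_M^{-1}$. Next, exhibit the explicit candidate multiplier
\[
\lambda^\star := \frac{\Delta t^2}{2}\, G_M(q)^{-1}[\nabla\xi(q)]^T M^{-1}\nabla V(q),
\]
which satisfies $\tilde q + M^{-1}\nabla\xi(q)\lambda^\star = q \in \mathcal{M}$ by construction, so $\xi$ vanishes at that point, and which has $\|\lambda^\star\| \le C'\Delta t^2$ uniformly in $q$.

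The crucial step is then to identify $\lambda^\star$ with $\Lambda(q,\tilde q)$ from Section~\ref{sec:ex1_}. This I do by invoking Proposition~\ref{lem:DDtilde}(ii): there exist $\beta_0, \alpha_1 > 0$ (independent of $q \in \mathcal{M}$) such that whenever $\tilde q \in \mathcal{B}_d(q,\beta_0)$, one has $(q,\tilde q) \in \mathcal{D}_{\rm imp}$, $\|\Lambda(q,\tilde q)\| < \alpha_1$, and any other solution $\lambda$ of $\xi(\tilde q + M^{-1}\nabla\xi(q)\lambda)=0$ satisfies $\|\lambda\| \geq \alpha_1$. Choosing $\Delta t_0$ so that $C\Delta t_0^2 < \beta_0$ and $C'\Delta t_0^2 < \alpha_1$ ensures that for every $\Delta t \in (0,\Delta t_0)$ and every $q\in\mathcal{M}$: (i) $(q,p) \in A_{\rm imp}$, and (ii) $\lambda^\star$ and $\Lambda(q,\tilde q)$ are both multipliers of norm $< \alpha_1$, hence equal. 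It then follows by direct substitution that $q^1 = q$, then $p^{1/2} = 0$ from the position update, and solving the final velocity projection gives $p^1 = \Pi_{T_q^*\mathcal{M}}\bigl(-\tfrac{\Delta t}{2}\nabla V(q)\bigr) = -p$. Therefore $\Psi_{\Delta t}(q,p)=(q^1,-p^1)=(q,p)$, which yields $(q,p)\in B_{\rm imp}$ as argued above.

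No step is genuinely hard; the only subtle point is the uniqueness argument needed to conclude $\Lambda(q,\tilde q) = \lambda^\star$, which is exactly why Proposition~\ref{lem:DDtilde}(ii) is stated uniformly in $q\in\mathcal{M}$ with a quantitative threshold $\alpha_1$. The uniformity of $\Delta t_0$ across $\mathcal{M}$ is then a straightforward compactness consequence.
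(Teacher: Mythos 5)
Your proposal is correct and follows essentially the same route as the paper's proof: the same candidate momentum $p = \tfrac{\Delta t}{2}\Pi_{T_q^*\mathcal M}\nabla V(q)$, the same explicit multiplier (the paper calls it $\Theta$), and the same use of the uniform uniqueness threshold $\alpha_1$ from Proposition~\ref{lem:DDtilde}(ii) to identify it with $\Lambda(q,\tilde q)$. The only cosmetic difference is that you conclude $(q,p)\in B_{\rm imp}$ directly from the definition~\eqref{eq:B_eps}, whereas the paper additionally invokes Lemma~\ref{lem:B} to note that the whole connected component of $(q,p)$ lies in $B_{\rm imp}$; both are valid.
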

\begin{proof}
For any $q \in \mathcal M$, for $\Delta t$ sufficiently small, the momentum 
\begin{equation}\label{eq:pq}
 p=\frac{\Delta t}{2}\Pi_{T^*_{q} \mathcal M} (\nabla V(q))
\end{equation}
(where the projection $\Pi_{T^*_{q} \mathcal M}$ is defined in~\eqref{eq:proj_T*M}) is such that $(q,p) \in A_{\rm imp}$. Indeed, using the second item of Proposition~\ref{lem:DDtilde}, it is enough to prove that $\tilde{q} \in \mathcal B_d(q,\beta_0)$ where 
\begin{equation}\label{eq:qtildeq}
\tilde{q}=q+\Delta t M^{-1} \left( p - \frac{\Delta t}{2} \nabla V(q) \right)
\end{equation}
which is obviously the case for $\Delta t$ sufficiently small. Notice that the threshold on~$\Delta t$ can be chosen locally constant in~$q$.

Then, starting from the configuration $(q,p) \in T^* \mathcal M$ with $p$ given by~\eqref{eq:pq}, and following the RATTLE dynamics~\eqref{eq:RATTLE_num}, one obtains
\begin{align*}
q^1&=\tilde{q} + M^{-1} \nabla \xi(q) \Lambda\left(q, \tilde{q}\right),
\end{align*}
where 
\begin{align*}
\tilde{q} &= q+ \Delta t M^{-1} \left( p - \frac{\Delta t}{2} \nabla V(q) \right)= q -  \frac{\Delta t^2}{2} M^{-1} \nabla \xi(q) G_M^{-1}(q)[\nabla \xi(q)]^T M^{-1} \nabla V(q)\\
& =q- M^{-1} \nabla \xi(q) \Theta,
\end{align*}
with 
\begin{equation}\label{eq:qtheta}
\Theta=  \frac{\Delta t^2}{2} G_M^{-1}(q)[\nabla \xi(q)]^T M^{-1} \nabla V(q) \in \R^m.
\end{equation}
The equation $\xi( \tilde{q} + M^{-1} \nabla \xi(q) \lambda) = 
0$ thus admits two solutions: $\lambda=\Lambda(q,\tilde{q})$ and $\lambda=\Theta$. Moreover, up to reducing further  the threshold on $\Delta t$, one can assume that $\|\Theta\| < \alpha_1$, where $\alpha_1 >0$ has been introduced in the second item of Proposition~\ref{lem:DDtilde}.  Again, the threshold on~$\Delta t$ can be chosen locally constant in~$q$.
Therefore, from~\eqref{eq:lambda_minoree}, it follows that $\Lambda\left(q, \tilde{q}\right) = \Theta$. This implies that $p^{1/2}=0$ and $q^1=q$. It is then easy to check that $p^1=-p$. 

The above construction was performed for an arbitrary element $q \in \mathcal{M}$. By a compactness argument, one can define a limiting timestep $\Delta t_0>0$ such that, for all $\Delta t \in (0,\Delta t_0)$ and for all $q \in \mathcal M$, it holds $(q,\tilde q) \in \mathcal D_{\rm imp}$ and $\Theta=\Lambda(q,\tilde q)$, where $p$, $\tilde q$ and $\Theta$ are respectively defined by~\eqref{eq:pq},~\eqref{eq:qtildeq} and~\eqref{eq:qtheta}. Thus, for any $\Delta t \in  (0,\Delta t_0)$ and any $q\in \mathcal M$, we exhibited a momentum $p$ such that $(q,p) \in A_{\rm imp}$ and 
\[
\Psi_{\Delta t} (q,p) =(q,p).
\]
This already shows that $A_{\rm imp} \cap \Psi_{\Delta t}^{-1} (A_{\rm imp})$ is a non empty open set. Moreover, one obviously has 
\[
\left(\Psi_{\Delta t} \circ\Psi_{\Delta t}\right)(q,p) =(q,p),
\]
and thus, from Lemma~\ref{lem:B}, the set $B_{\rm imp}$ defined by~\eqref{eq:B} contains at least the connected component of $A_{\rm imp} \cap \Psi_{\Delta t}^{-1} (A_{\rm imp})$ to which $(q,p)$ belongs. The set $B_{\rm imp}$ is thus a non empty open set.
\end{proof}

\paragraph{Reversibility of the RATTLE scheme up to momentum reversal.}

The following lemma shows the local reversibility up to momentum reversal of the RATTLE scheme, for the admissible Lagrange multiplier function introduced in the previous section.

\begin{lem}
\label{lem:rev_Psi_dt}
Assume~\eqref{eq:Mcompact}, and consider the Lagrange multiplier function introduced in Section~\ref{sec:ex1_} as well as the constant $\beta_0$ introduced in Proposition~\ref{lem:DDtilde}. There exists $\beta_1 \in (0,\beta_0)$ (independent of $\Delta t>0$) such that, for any $(q,p) \in T^* \mathcal M$ for which 
\begin{equation}\label{eq:rattle_rev_1}
\left\|\Delta t M^{-1}\left(p - \frac{\Delta t}{2} \nabla V(q)\right)\right\| < \beta_1,
\end{equation}
and
\begin{equation}\label{eq:rattle_rev_2}
\left\|\Delta t M^{-1}\left(-p^1 - \frac{\Delta t}{2} \nabla V(q^1)\right)\right\| < \beta_1,
\end{equation}
where $(q^1,p^1)$ is the state obtained after one step of the RATTLE scheme~\eqref{eq:RATTLE_num} starting from $(q,p)$ (namely $(q^1,-p^1)=\Psi_{\Delta t}(q,p)$), the configurations $(q,p)$ and $(q^1,-p^1)$ belong to $A_{\rm imp}$ and 
$$ \left(\Psi_{\Delta t} \circ \Psi_{\Delta t}\right)(q,p)=(q,p).$$
\end{lem}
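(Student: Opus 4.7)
My plan is to split the proof into three stages: first a routine verification that the conditions place both $(q,p)$ and $(q^1,-p^1)$ in $A_{\rm imp}$; then an algebraic identification of a candidate backward Lagrange multiplier; and finally an estimate showing that this candidate is small enough to coincide, by the uniqueness clause of Proposition~\ref{lem:DDtilde}(ii), with the multiplier actually selected by $\Lambda$. The last step is the real work.

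The reduction is immediate. Condition~\eqref{eq:rattle_rev_1} together with $\beta_1<\beta_0$ yields $\tilde q:=q+\Delta t\,M^{-1}(p-\tfrac{\Delta t}{2}\nabla V(q))\in\mathcal B_d(q,\beta_0)$, so by Proposition~\ref{lem:DDtilde}(ii) we have $(q,\tilde q)\in\mathcal D_{\rm imp}$ and hence $(q,p)\in A_{\rm imp}$; the RATTLE output $(q^1,p^1)$ is therefore well defined. The analogous bound~\eqref{eq:rattle_rev_2} applied to $\tilde q^{\rm rev}:=q^1+\Delta t\,M^{-1}(-p^1-\tfrac{\Delta t}{2}\nabla V(q^1))$ gives $(q^1,-p^1)\in A_{\rm imp}$. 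Next, eliminating $p^{1/2}$ from the forward RATTLE equations (using $\Delta t M^{-1}p^{1/2}=q^1-q$) produces the algebraic identity
\[
q = \tilde q^{\rm rev} + M^{-1}\nabla\xi(q^1)\,(\Delta t\,\lambda^1),
\]
so $\Delta t\,\lambda^1$ is a solution of the nonlinear equation $\xi(\tilde q^{\rm rev}+M^{-1}\nabla\xi(q^1)\lambda)=0$. If I can show that the admissible function selects this particular root, i.e.\ $\Lambda(q^1,\tilde q^{\rm rev})=\Delta t\,\lambda^1$, then $\bar q^1=q$, and the relation $\Delta t M^{-1}\bar p^{1/2}=q-q^1=-\Delta t M^{-1}p^{1/2}$ forces $\bar p^{1/2}=-p^{1/2}$. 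The final RATTLE momentum projection then reads $\bar p^1=-p+\nabla\xi(q)(\bar\lambda^1-\lambda^{1/2})$; imposing $[\nabla\xi(q)]^T M^{-1}\bar p^1=0$ and using $p\in T^*_q\mathcal M$ together with the invertibility of $G_M(q)$ forces $\bar\lambda^1=\lambda^{1/2}$ and hence $\bar p^1=-p$, so $\Psi_{\Delta t}\circ\Psi_{\Delta t}(q,p)=(q,p)$.

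The main obstacle is therefore to prove $\|\Delta t\,\lambda^1\|<\alpha_1$ for an appropriate choice of $\beta_1$ (independent of $\Delta t$), so that Proposition~\ref{lem:DDtilde}(ii) delivers the identification $\Lambda(q^1,\tilde q^{\rm rev})=\Delta t\,\lambda^1$. Starting from the explicit formula for the velocity Lagrange multiplier, I derive the representation
\[
\Delta t\,\lambda^1 \;=\; G_M(q^1)^{-1}[\nabla\xi(q^1)]^T(q-q^1) \;+\; \frac{\Delta t^2}{2}\,G_M(q^1)^{-1}[\nabla\xi(q^1)]^T M^{-1}\nabla V(q^1).
\]
The first term is $O(\|q-q^1\|^2)$: since $\xi(q)=\xi(q^1)=0$, Taylor expansion of $\xi$ around $q^1$ gives $[\nabla\xi(q^1)]^T(q-q^1)=O(\|q-q^1\|^2)$, and $\|q-q^1\|\le C\beta_1$ follows from the Lipschitz regularity of $\Lambda$ together with $\Lambda(q,q)=0$ (the Lipschitz constant being uniform by compactness of $\mathcal M$). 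The subtle point is the second summand, which I handle by exploiting the velocity constraint $[\nabla\xi(q^1)]^T M^{-1}p^1=0$ to rewrite
\[
\frac{\Delta t^2}{2}[\nabla\xi(q^1)]^T M^{-1}\nabla V(q^1) \;=\; -[\nabla\xi(q^1)]^T\!\left[\Delta t\,M^{-1}\!\left(-p^1-\tfrac{\Delta t}{2}\nabla V(q^1)\right)\right],
\]
whose norm is at most $\|\nabla\xi(q^1)\|\,\beta_1$ by~\eqref{eq:rattle_rev_2}. Uniform compactness bounds on $G_M^{-1}$ and $\nabla\xi$ over $\mathcal M$ then give $\|\Delta t\,\lambda^1\|\le C\beta_1$ with $C$ independent of $\Delta t$, so the choice $\beta_1<\alpha_1/C$ (and $\beta_1<\beta_0$) closes the argument.
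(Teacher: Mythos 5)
Your proposal is correct and follows the same skeleton as the paper's proof: place $(q,p)$ and $(q^1,-p^1)$ in $A_{\rm imp}$ via Proposition~\ref{lem:DDtilde}(ii), observe that $\Delta t\,\lambda^1$ solves the backward projection equation, show it has norm below $\alpha_1$ so that the uniqueness clause~\eqref{eq:lambda_minoree} forces $\Lambda(q^1,\tilde q^{\rm rev})=\Delta t\,\lambda^1$ and hence $q^2=q$, and finish with the same $G_M(q)$-invertibility argument for the momenta. The only genuine divergence is in how the bound $\|\Delta t\,\lambda^1\|\le C\beta_1$ is obtained. The paper extracts $\lambda^1$ from the momentum update (second line of~\eqref{eq:q1p1_}), which produces three terms controlled respectively by~\eqref{eq:rattle_rev_2}, by~\eqref{eq:rattle_rev_1}, and by the uniform Lipschitz bound on $\Lambda$ (estimate~\eqref{eq:estimlambda1}). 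You instead project the position identity~\eqref{eq:q_q2_} onto $[\nabla\xi(q^1)]^T$, use the cotangency constraint $[\nabla\xi(q^1)]^TM^{-1}p^1=0$ to eliminate the momentum, and split the result into a term that is quadratic in $\|q-q^1\|$ (via a second-order Taylor expansion of $\xi$, which requires a uniform Hessian bound near $\mathcal M$ --- available from~\eqref{eq:Mcompact}) plus a term controlled directly by~\eqref{eq:rattle_rev_2}. Both routes yield a $\Delta t$-independent constant; yours trades the paper's reliance on the explicit formula for the velocity Lagrange multiplier for one extra degree of regularity of $\xi$, while still needing the Lipschitz bound on $\Lambda$ to control $\|q-q^1\|$, so neither argument is strictly more economical. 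No gap.
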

Note that, thanks to the second item of Proposition~\ref{lem:DDtilde} and the inequality $\beta_1 < \beta_0$, the condition~\eqref{eq:rattle_rev_1} ensures that $\Lambda\left(q,q+\Delta t M^{-1}\left(p - \frac{\Delta t}{2} \nabla V(q)\right)\right)$ is well defined, and thus that $\Psi_{\Delta t}(q,p)$ is well defined. Likewise, one can check that  the condition~\eqref{eq:rattle_rev_2} ensures that $\Psi_{\Delta t}(q^1,-p^1)=\Psi_{\Delta t}(\Psi_{\Delta t}(q,p))$ is well defined.

There are two ways to satisfy the assumptions~\eqref{eq:rattle_rev_1}--\eqref{eq:rattle_rev_2}. The first one is to fix a configuration $(q,p) \in T^*\mathcal{M}$ and to choose a sufficiently small timestep~$\Delta t$. The second one is to place an upper limit on the norm of the momenta: for any $R>0$, there exists $\Delta t_R > 0$ such that  for all $\Delta t \in (0, \Delta t_R)$ and for all $(q,p) \in T^* \mathcal M$ for which $\|p\| < R$, the assumptions~\eqref{eq:rattle_rev_1}--\eqref{eq:rattle_rev_2} are satisfied.

Lemma~\ref{lem:rev_Psi_dt} shows that, for the Lagrange multiplier function $\Lambda$ introduced in this section, if one considers the proposal move based on the subset of $A_{\rm imp}$ defined by
\[
\widetilde{A}_{\rm imp} = \Big\{ (q,p) \in T^* \mathcal M, \, \text{ \eqref{eq:rattle_rev_1} holds} \Big\},
\]
then Steps (4)-(5) in the reverse projection check (see Section~\ref{sec:psi_B}) are actually not necessary. Indeed, if \eqref{eq:rattle_rev_1}--\eqref{eq:rattle_rev_2} holds, then necessarily $(\Psi_{\Delta t} \circ \Psi_{\Delta t})(q,p) = (q,p)$. In other words, 
\[
\widetilde{B}_{\rm imp}= \Big\{ (q,p) \in \widetilde A_{\rm imp} \cap \Psi_{\Delta t}^{-1}(\widetilde A_{\rm imp}), \ (\Psi_{\Delta t}\circ \Psi_{\Delta t})(q,p) = (q,p) \Big\}= \widetilde A_{\rm imp} \cap \Psi_{\Delta t}^{-1}(\widetilde A_{\rm imp}).
\]

\begin{proof}[Proof of Lemma~\ref{lem:rev_Psi_dt}]
For a given configuration $(q,p) \in T^* \mathcal M$, the assumption~\eqref{eq:rattle_rev_1} together with  the second item of Proposition~\ref{lem:DDtilde} imply that 
\[
\left(q,q+\Delta t M^{-1}\left(p - \frac{\Delta t}{2} \nabla V(q)\right)\right) \in {\mathcal D}_{\rm imp}.
\]
It is then possible to define the state $(q^1,p^1)$ obtained after one step of the RATTLE scheme~\eqref{eq:RATTLE_num} starting from $(q,p)$:
$$(q^1,-p^1)=\Psi_{\Delta t}(q,p).$$
Similarly, under the assumption~\eqref{eq:rattle_rev_2}, one can define the state $(q^2,p^2)$ obtained after one step of the RATTLE scheme~\eqref{eq:RATTLE_num} starting from $\Psi_{\Delta t}(q,p)$:
$$(q^2,-p^2)=\Psi_{\Delta t}(q^1,-p^1).$$
In particular, $(q^2,-p^2)=(\Psi_{\Delta t} \circ \Psi_{\Delta t})(q,p)$. Our aim is to prove that $q^2=q$ and $p^2=-p$.

We have from~\eqref{eq:RATTLE_num}:
\begin{equation}\label{eq:q1p1_}
\left\{
\begin{aligned}
q^1&=q+\Delta t M^{-1} \left( p - \frac{\Delta t}{2} \nabla V(q) +
  \nabla \xi(q) \lambda^{1/2}\right),\\
p^1&=p - \frac{\Delta t}{2} (\nabla V(q) + \nabla V(q^1)) + \nabla
\xi(q) \lambda^{1/2} +\nabla \xi (q^1) \lambda^1,
\end{aligned}
\right.
\end{equation}
and
\begin{equation}\label{eq:q2p2_}
\left\{
\begin{aligned}
q^2&=q^1+\Delta t M^{-1} \left( -p^1 - \frac{\Delta t}{2} \nabla V(q^1) +
  \nabla \xi(q^1) \lambda^{3/2} \right),\\
p^2&=-p^1 - \frac{\Delta t}{2} (\nabla V(q^1) + \nabla V(q^2)) + \nabla
\xi(q^1) \lambda^{3/2} +\nabla \xi (q^2) \lambda^2.
\end{aligned}
\right.
\end{equation}
Equation~\eqref{eq:q1p1_} can be reformulated as
\begin{equation}
  \label{eq:q_q2_}
  q=q^1+\Delta t M^{-1}\left(-p^1 - \frac{\Delta t}{2} \nabla V(q^1) + \nabla \xi(q^1) \lambda^1 \right).
\end{equation}
Therefore, there are two solutions $\lambda=\Delta t \lambda^1$ and $\lambda=\Delta t \lambda^{3/2}$ to the equation
\[
\xi\left(q^1+\Delta t M^{-1}\left(-p^1 - \frac{\Delta t}{2} \nabla V(q^1)\right) + M^{-1} \nabla \xi(q^1) \lambda \right)=0.
\]
Let us prove that $\lambda^1 = \lambda^{3/2}$, which will then imply $q=q^2$ from~\eqref{eq:q2p2_}-\eqref{eq:q_q2_}. The second equation in~\eqref{eq:q1p1_} leads to
\begin{align*}
  \Delta t \lambda^{1} = \Delta t \, G_M^{-1}(q^1) [\nabla \xi(q^1)]^T M^{-1} \left( p^1 + \frac{\Delta t}{2} \nabla V(q^1) - p + \frac{\Delta t}{2} \nabla V(q) -\nabla \xi(q) \lambda^{1/2} \right),
\end{align*}
so that, since $\Delta t \lambda^{1/2}=\Lambda\left(q, q+\Delta t M^{-1}\left(p - \frac{\Delta t}{2} \nabla V(q)\right) \right)$,
\begin{align}
  \|\Delta t \lambda^{1}\| &\le \left\| G_M^{-1} \right\|_{L^\infty(\mathcal M)} \|\nabla \xi \|_{L^\infty(\mathcal M)} \left(2\beta_1 +\|M^{-1}\| \| \nabla \xi\|_{L^\infty(\mathcal M)} \| \Delta t \lambda^{1/2}\| \right) \nonumber \\
  &\le  \| G_M^{-1} \|_{L^\infty(\mathcal M)} \|\nabla \xi \|_{L^\infty(\mathcal M)} \left(2\beta_1 +\|M^{-1}\| \| \nabla \xi\|_{L^\infty(\mathcal M)} \sup_{q\in \mathcal M, \tilde q \in \mathcal B_d(q,\beta_1)} \left\|\nabla_{\tilde{q}}\Lambda(q, \tilde q)\right\| \beta_1 \right), \label{eq:estimlambda1}
\end{align}
where we used the assumptions~\eqref{eq:rattle_rev_1}--\eqref{eq:rattle_rev_2}. 
From the estimate~\eqref{eq:estimlambda1}, one can obviously choose $\beta_1 \in (0,\beta_0)$, not depending of $(q,p,\Delta t)$, such that, for all $(q,p)$ for which~\eqref{eq:rattle_rev_1}--\eqref{eq:rattle_rev_2} are satisfied, 
\begin{align}
  \|\Delta t \lambda^{1}\| &< \alpha_1,
 \label{eq:majo_}
\end{align}
where  $\alpha_1 >0$ has been introduced in the second item of Proposition~\ref{lem:DDtilde}. 
Since \linebreak $\Delta t \lambda^{3/2}=\Lambda\left(q^1, q^1+\Delta t M^{-1}\left(-p^1 - \frac{\Delta t}{2} \nabla V(q^1)\right) \right)$, the property~\eqref{eq:lambda_minoree} thus implies that $\lambda^1=\lambda^{3/2}$, and as a consequence $q=q^2$. 

Now, by considering the second equations in~\eqref{eq:q1p1_} and~\eqref{eq:q2p2_}, 
\begin{align*}
  p^2&=-p +\frac{\Delta t}{2} (\nabla V(q) - \nabla V(q^2)) - \nabla \xi(q) \lambda^{1/2} + \nabla \xi(q^1) (\lambda^{3/2}-\lambda^1) + \nabla \xi(q^2) \lambda^2\\ 
  &=-p + \nabla\xi(q) (\lambda^2-\lambda^{1/2}). 
\end{align*}
Since $p^2$ and $p$ are in $T^*_q{\mathcal M}$, it holds
\[
0 = [\nabla \xi(q)]^T M^{-1} (p^2+p) = G_M(q)(\lambda^2-\lambda^{1/2}),
\]
which implies $\lambda^2=\lambda^{1/2}$ and thus $p^2=-p$. This concludes the proof of the equality $\left(\Psi_{\Delta t} \circ \Psi_{\Delta t}\right)(q,p)=(q,p)$.
\end{proof}

Two further comments are in order here. First, the procedure we used in Section~\ref{sec:ex1_} to build the Lagrange multiplier function is theoretical in nature: it is based on the implicit function theorem, which does not provide an algorithm to actually compute the Lagrange multiplier (see however Remark~\ref{rem:simple_example} below). Second, this Lagrange multiplier function is by construction restricted to positions~$\tilde{q}$ living in some neighborhood of the submanifold $\mathcal M$, and this is typically satisfied only for small timesteps, but not for large ones. In practice, it is useful to use large timesteps, since 
this may allow to leave modes of the target distribution in fewer iterations (see Section~\ref{sec:numerics} for some numerical illustrations). 
Our objective now is thus to extend the definition of the Lagrange multiplier function so that it can be used even for large timesteps, and to make it implementable in practice. This is done in the next section, using Newton's method. The importance of Steps (4)-(5) in this context is illustrated numerically in Section~\ref{sec:numerics}.

\begin{rem}
  \label{rem:simple_example}
  In some cases, it is possible to identify the solution to the implicit function theorem and the associated domain $\mathcal D$ by solving explicitly the  equation in $\lambda$: $\xi(\tilde{q}+M^{-1} \nabla \xi(q) \lambda)=0$. In simple situations, it is then even possible to define $A$ by fixing an explicit threshold $R_{\Delta t}$ on the momenta (see~\cite[Algorithm~3.58]{lelievre-rousset-stoltz-book-10}), in which case $A=\{(q,p) \in T^* \mathcal M, \, \|p\|^2 < R_{\Delta t} \}$. As discussed after Lemma~\ref{lem:rev_Psi_dt}, Steps~(4)-(5) in the reverse projection check above are not needed in this case for sufficiently small~$\Delta t$: it is enough to simply check that $(q,p) \in A$ and $\Psi_{\Delta t}(q,p) \in A$.

  As a simple example, let us consider the parameters $d=2$, $M={\rm Id}$, $V=0$ and $\xi(q)=\|q\|^2-1$. Solving $(C_q)$ in~\eqref{eq:RATTLE} then amounts to finding the roots of a polynomial of order two in $\lambda^{n+1/2}$. Since $\left\|q^{n+1}\right\|^2 = \left\|q^n\right\|^2 = 1$ and $p^n \in T^*_{q^n}\mathcal{M}$, \emph{i.e.} $q^n \cdot p^n = 0$, it holds 
  \[
  \left(\lambda^{n+1/2}\right)^2 + \frac{1}{\Delta t} \lambda^{n+1/2} + \frac{\left\|p^n\right\|^2}{4} = 0. 
  \]
  This polynomial admits a solution if and only if $|p^n|^2 \le \Delta t^{-2}$. When this condition is satisfied, one chooses the Lagrange multiplier with the smallest absolute value to define $\lambda^{n+1/2}$:
  \[
  \lambda^{n+1/2} = \frac{-1+\sqrt{1 - \Delta t^2 \|p^n\|^2}}{2\Delta t}.
  \]
  It is then easy to check that $\lambda^{n+1}=\lambda^{n+1/2}$, and that, for $A=\{(q,p) \in T^* \mathcal M, \, \|p\|^2 < \Delta t^{-2} \}$, this defines a map $\Psi_{\Delta t}: A \to T^* \mathcal M$ will all the required properties. In this very simple case, one can actually check that $|p^{n+1}| = |p^n|$ with $(q^{n+1},p^{n+1}) = \Psi_{\Delta t}(q^n,p^n)$, so that $\Psi_{\Delta t} (A) = A$. It is therefore not even necessary to use Step (3) in the reverse projection check.
\end{rem}

\subsubsection{Example 2: A global and more practical projection based on Newton's method}
\label{sec:ex2}

A practical and general way to define the ensemble $A$ is to construct it as the set of configurations $(q^n,p^n)$ for which a Newton's algorithm yields a solution to $(C_q)$
(up to numerical precision) before a fixed given number of iterations
(see~\cite[Section~3.3.5.1]{lelievre-rousset-stoltz-book-10}
and~\cite{zappa-holmes-cerfon-goodman-17}). 

In order to put this practical algorithm in our framework, let us consider the following idealized Newton's algorithm to define the Lagrange multiplier function~$\Lambda_{\rm newt}$ and the associated admissible ensemble $\mathcal D_{\rm newt}$. For given $q \in \mathcal M$ and $\tilde{q} \in \R^d$, let us perform a given number $N_{\rm newt}$ of Newton's iterations to solve the nonlinear problem
\[
\text{find } \lambda \in \R^m \text{ such that }\xi(\tilde{q} + M^{-1} \nabla \xi(q) \lambda)=0,
\]
starting from a given value of $\lambda$ (say 0 to fix the ideas). The precise algorithm is the following.

\begin{algo}[Idealized Newton's algorithm]
  \label{algo:idealized_Newton}
Starting from $\theta^0=0$, one performs the following iterations: iterate on $n =0, \ldots, N_{\rm newt}$, 
\begin{enumerate}[(1)]
\item If $[\nabla \xi(\tilde{q} + M^{-1} \nabla \xi(q) \theta^n)]^T M^{-1} \nabla
    \xi(q)$ is not invertible then set $(q,\tilde{q}) \not \in \mathcal D_{\rm newt}$ and exit the loop;
\item Otherwise, compute 
\[
\theta^{n+1} = \theta^n - \left( [\nabla \xi(\tilde{q} + M^{-1} \nabla \xi(q) \theta^n)]^T M^{-1} \nabla \xi(q) \right)^{-1} \xi\left(\tilde{q} + M^{-1} \nabla \xi(q) \theta^n\right).
\]
Increment $n$ and go back to (1).
\end{enumerate}
If these iterations are performed successfully (namely if $n=N_{\rm newt}+1$ when exiting the loop), one then proceeds to the following steps, by considering the last configuration obtained after the Newton's iterations:
\[
\widehat{q}=\tilde{q} + M^{-1} \nabla \xi(q) \theta^{N_{\rm newt}}.
\]
\begin{enumerate}[(1)]
\setcounter{enumi}{2}
\item If $(q,\hat{q}) \not \in \mathcal D_{\rm imp}$ then set $(q,\tilde{q}) \not \in \mathcal D_{\rm newt}$ and exit.
\item Otherwise, set $(q,\tilde{q}) \in \mathcal D_{\rm newt}$ and $\Lambda_{\rm newt}(q,\tilde{q})=\Lambda(q,\widehat q)$, where $\Lambda$ is the Lagrange  multiplier function defined in Proposition~\ref{prop:proj_well_defined_}. 
\end{enumerate}
\end{algo}

This algorithm is in fact an idealized method, where the initial Newton's iterations allow to bring $\hat{q}$ sufficiently close to $\mathcal{M}$ in order for the projection to be uniquely defined by Proposition~\ref{prop:proj_well_defined_} (the initial position $\tilde{q}$ is indeed in general not close to $\mathcal{M}$ for large timesteps). Notice that thanks to the first item of Proposition~\ref{lem:DDtilde} and since $\mathcal{G}_M(q,\hat{q})$ is invertible by construction, if $\|\xi(\hat{q})\|$ is small (namely $\|\xi(\hat{q})\|<\delta(\|\mathcal{G}_M(q,\hat{q})^{-1}\|)$), then $(q,\hat{q}) \in \mathcal D_{\rm imp}$.

The following proposition is a direct consequence of the previous results.
\begin{prop}
Assume that~\eqref{eq:Mcompact} holds.  The function $\Lambda_{\rm newt}$ defines an admissible (in the sense of Definition~\ref{def:proj_sel}) Lagrange multiplier function on the open set~$\mathcal D_{\rm newt}$.
\end{prop}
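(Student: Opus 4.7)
The plan is to verify, for the function $\Lambda_{\rm newt}$ produced by Algorithm~\ref{algo:idealized_Newton}, the four requirements of an admissible Lagrange multiplier function in Definition~\ref{def:proj_sel}: openness of $\mathcal D_{\rm newt}$, $C^1$-regularity, the projection property~\eqref{eq:Pi}, and the non-tangential projection property~\eqref{eq:non_tang}. All of these follow by propagating the properties already established for $\Lambda$ on $\mathcal D_{\rm imp}$ in Proposition~\ref{prop:proj_well_defined_} through the finite Newton loop, using that each iteration is itself smooth on the open region where it is well-posed.

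First I would argue by induction on $n \in \{0,\dots,N_{\rm newt}\}$ that the set of pairs $(q,\tilde{q})$ for which the first $n$ iterations of Algorithm~\ref{algo:idealized_Newton} succeed is open in $\mathcal M \times \R^d$, and that the resulting iterate $\theta^n(q,\tilde{q})$ is $C^1$ there. The induction step uses that, given a $C^1$ map $\theta^n$, the auxiliary matrix $[\nabla\xi(\tilde{q} + M^{-1}\nabla\xi(q)\theta^n)]^T M^{-1}\nabla\xi(q)$ depends continuously on $(q,\tilde{q})$, so the subset on which it is invertible is open, and on this subset $\theta^{n+1}$ is $C^1$ by the chain rule applied to the Newton update. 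After $N_{\rm newt}$ steps, one obtains an open set $\mathcal D_1 \subset \mathcal M \times \R^d$ on which $\theta^{N_{\rm newt}}$, and hence $\widehat{q}(q,\tilde{q}) = \tilde{q} + M^{-1}\nabla\xi(q)\theta^{N_{\rm newt}}$, are $C^1$. Since $\mathcal D_{\rm imp}$ is open in $\mathcal M \times \R^d$ by Proposition~\ref{prop:proj_well_defined_}, the preimage
\[
\mathcal D_{\rm newt} = \bigl\{ (q,\tilde{q}) \in \mathcal D_1 : (q, \widehat{q}(q,\tilde{q})) \in \mathcal D_{\rm imp} \bigr\}
\]
is open in $\mathcal M \times \R^d$, and $\Lambda_{\rm newt}(q,\tilde{q}) = \theta^{N_{\rm newt}}(q,\tilde{q}) + \Lambda(q, \widehat{q}(q,\tilde{q}))$ is a composition of $C^1$ maps on $\mathcal D_{\rm newt}$, hence $C^1$.

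The projection and non-tangential properties are then immediate from the corresponding properties of $\Lambda$ at $(q,\widehat{q}) \in \mathcal D_{\rm imp}$. Indeed, the admissibility of $\Lambda$ gives $\widehat{q} + M^{-1}\nabla\xi(q)\Lambda(q,\widehat{q}) \in \mathcal M$, and substituting $\widehat{q} = \tilde{q} + M^{-1}\nabla\xi(q)\theta^{N_{\rm newt}}$ shows that the projected point produced by $\Lambda_{\rm newt}$ starting from $\tilde{q}$ equals the one produced by $\Lambda$ starting from $\widehat{q}$. In particular, the matrix $[\nabla\xi(\tilde{q} + M^{-1}\nabla\xi(q)\Lambda_{\rm newt}(q,\tilde{q}))]^T M^{-1}\nabla\xi(q)$ coincides with $[\nabla\xi(\widehat{q} + M^{-1}\nabla\xi(q)\Lambda(q,\widehat{q}))]^T M^{-1}\nabla\xi(q)$, which is invertible by~\eqref{eq:non_tang} for $\Lambda$ on $\mathcal D_{\rm imp}$.

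The main (minor) obstacle is essentially bookkeeping — rigorously carrying openness and $C^1$-regularity through the Newton loop and the subsequent composition with $\Lambda$. A subtle point to flag is the precise formula for $\Lambda_{\rm newt}$: the total shift applied to $\tilde{q}$ must absorb the accumulated Newton offset $\theta^{N_{\rm newt}}$, so one should interpret $\Lambda_{\rm newt}(q,\tilde{q})$ as $\theta^{N_{\rm newt}} + \Lambda(q,\widehat{q})$; otherwise the projection property~\eqref{eq:Pi} would fail unless $\theta^{N_{\rm newt}} = 0$. Once this is understood, the verification reduces to invoking Proposition~\ref{prop:proj_well_defined_} and the chain rule, with no further analytic input required.
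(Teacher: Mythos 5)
Your proof is correct, and it fills in exactly the verification the paper leaves implicit (the paper states the proposition as ``a direct consequence of the previous results'' and gives no proof): openness and $C^1$-regularity propagate through the finite Newton loop by induction, and the projection and non-tangential properties are inherited from Proposition~\ref{prop:proj_well_defined_} at the pair $(q,\widehat{q})$. Your remark that the multiplier must be read as $\theta^{N_{\rm newt}} + \Lambda(q,\widehat{q})$ rather than the literal $\Lambda(q,\widehat{q})$ of Algorithm~\ref{algo:idealized_Newton} is a valid and necessary correction for the projection property~\eqref{eq:Pi} to hold with $\tilde{q}$ as the point being projected.
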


Let us finally comment on a few practical aspects. In practice, one simply performs Newton's iterations, without the final projection steps (3) and (4). Moreover, the number of iterations is  limited by using a stopping criterion of the form $\|\xi(\tilde{q}+ M^{-1} \nabla \xi(q) \theta^{n+1}) \| < \varepsilon_{\rm newt}$ and/or $\|\theta^{n+1}-\theta^n\| < \varepsilon_{\rm newt} \|\theta^n\|$ for some tolerance $\varepsilon_{\rm newt}>0$. If these stopping criteria are not satisfied after $N_{\rm newt}$ iterations, the projection is unsuccessful, which implicitly defines $\mathcal D_{\rm newt}$. Notice that the projection is also unsuccessful if the matrix $[\nabla \xi(\tilde q + M^{-1} \nabla \xi(q) \theta^n)]^T M^{-1} \nabla \xi(q)$ happens to be numerically non invertible at some iteration $n$ (numerical non invertibility here means that the inverse is ill conditioned; see the discussion in Section~\ref{sec:summary_algo}). To connect to the idealized algorithm, one then needs to assume that, once the stopping criteria are satisfied, the position $\widehat{q}$ is so close to the submanifold $\mathcal M$ that the remaining steps of the idealized algorithm (namely the additional Newton's iterations, as well as the final projection step using Proposition~\ref{prop:proj_well_defined_}) will not modify the projected point significantly (namely, within numerical precision). This is reasonable for sufficiently small tolerances~$\varepsilon_{\rm newt}$. In practice, in the numerical experiments of Section~\ref{sec:numerics}, we use $N_{\rm newt}= 100$ and $\varepsilon_{\rm newt}= 10^{-12}$.

Note also that there are other ways of updating the Lagrange multipliers, for instance by successfully updating the components of~$\theta$ instead of performing a global update. This may be useful in situations when the dimensionality~$m$ of the constraint is large; see~\cite[Section~7.2]{leimkuhler-reich-04} for further precisions. It should be possible to adapt our framework to these types of algorithms.

\subsection{Constrained Generalized Hybrid Monte Carlo schemes}
\label{sec:GHMC_schemes}

Once it is clear how to integrate the constrained Hamiltonian dynamics in a reversible way, Hybrid Monte Carlo (HMC) and Generalized Hybrid Monte Carlo (GHMC) schemes to sample $\mu$ are readily obtained, as made precise in this section. 


\subsubsection{Constrained Hybrid Monte Carlo scheme}
\label{sec:constrained_HMC}

We start by presenting a HMC scheme, which corresponds to integrating one step of the Hamiltonian dynamics, before resampling the momenta in the tangent space. The one-step HMC scheme we consider here can actually be seen as a time-discretization with a timestep $\Delta t^2/2$ of the constrained overdamped Langevin dynamics (see~\cite[Remark~3.7]{lelievre-rousset-stoltz-12}), which reads:
\[
\left\{ \begin{aligned}
dq_t & = -M^{-1}\nabla V(q_t) \, dt + \sqrt{2}M^{-1/2} dW_t + M^{-1}\nabla\xi(q_t) \, d\lambda_t, \\
\xi(q_t) & = 0.
\end{aligned} \right.
\]
This is why we call  Metropolized Adjusted Langevin Algorithm (MALA) the corresponding numerical scheme in Section~\ref{sec:numerics}.

\begin{algo}[Constrained Hybrid Monte Carlo]
\label{algo:HMC}
Starting from $(q^n,p^n) \in T^*\mathcal M$,
\begin{enumerate}[(i)]
\item Sample $\tilde{p}^{n}$ in $T_{q^n}^* \mathcal M$ according to the distribution $\kappa_{q^n}$ defined in~\eqref{eq:kappa};
\item Use one step of the RATTLE dynamics with momentum reversal and reverse projection check: 
  \[
  (\tilde{q}^{n+1},\tilde p^{n+1})=\Psi_{\Delta t}^{\rm rev}(q^n,\tilde p^n);
  \]
\item Draw a random variable $U^n$ with uniform law on $(0,1)$:
\begin{itemize}
\item if $U^n \le \exp(-H(\tilde q^{n+1},\tilde p^{n+1})+H(q^n,p^n))$, accept the proposal: $({q}^{n+1},p^{n+1})=(\tilde{q}^{n+1},\tilde p^{n+1})$;
\item otherwise reject the proposal: $({q}^{n+1},p^{n+1})=(q^n,\tilde p^n)$.
\end{itemize}
\end{enumerate}
\end{algo}

For a discussion on the practical way to implement Step~(i), we refer to Remark~\ref{rem:sample_p} below. By construction and thanks to the properties of $\Psi_{\Delta t}^{\rm rev}$ (see Proposition~\ref{prop:PsiA}), the Markov chain $(q^n,p^n)_{n \ge 0}$ admits $\mu$ as an invariant measure, as the composition of two steps which are reversible\footnote{Notice that the composition of two Markov kernels which are $\pi$-reversible is not $\pi$-reversible in general; it however admits $\pi$ as an invariant measure.} with respect to $\mu$: the sampling of independent and identically distributed (i.i.d.) momenta in Step (i), and the Metropolis--Hastings steps (Steps (ii)-(iii)). The reversibility of (ii)-(iii) relies on the following lemma whose proof is very similar to the unconstrained case (see~\cite{mehlig-heermann-forrest-92} and~\cite[Section~2.1.4]{lelievre-rousset-stoltz-book-10}).

\begin{lem}
The Metropolis--Hastings ratio associated with the Steps (ii)-(iii) of Algorithm~\ref{algo:HMC}, for a proposed move from $(q,p)$ to $(q',p')$, reads
\[
\frac{\delta_{\Psi_{\Delta t}^{\rm rev}(q',p')}(dq\,dp) \exp(-H(q',p')) \sigma_{T^*\mathcal M} (dq'\,dp')}{\delta_{\Psi_{\Delta t}^{\rm rev}(q,p)}(dq'\,dp') \exp(-H(q,p)) \sigma_{T^*\mathcal M} (dq\,  dp)}=\exp\left(-H(q',p')+H(q,p)\right).
\]
\end{lem}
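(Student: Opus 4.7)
The statement is really an identity of measures on $T^*\mathcal{M} \times T^*\mathcal{M}$ that encodes the familiar fact: the proposal kernel of Algorithm~\ref{algo:HMC} is deterministic and generated by the involution $\Psi^{\rm rev}_{\Delta t}$, so the only nontrivial piece of the Metropolis--Hastings ratio is the ratio of the target densities. The plan is therefore to formalize the numerator and denominator as measures on $T^*\mathcal{M} \times T^*\mathcal{M}$ parametrized by $(q,p)$ and $(q',p')$, and then use Proposition~\ref{prop:PsiA} to cancel everything except the Boltzmann factor.

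First I would write the proposal kernel as $T((q,p),dq'\,dp') = \delta_{\Psi_{\Delta t}^{\rm rev}(q,p)}(dq'\,dp')$ and introduce the two unnormalized measures appearing in the numerator and denominator of the claimed ratio, namely
\[
\eta(dq\,dp\,dq'\,dp') = \rme^{-H(q,p)}\, \sigma_{T^*\mathcal{M}}(dq\,dp)\, \delta_{\Psi_{\Delta t}^{\rm rev}(q,p)}(dq'\,dp')
\]
for the denominator and the analogous object $\tilde\eta$ obtained by swapping the roles of $(q,p)$ and $(q',p')$ for the numerator. Proving the lemma is then equivalent to showing
\[
\tilde\eta(dq\,dp\,dq'\,dp') = \rme^{-H(q',p')+H(q,p)}\, \eta(dq\,dp\,dq'\,dp')
\]
as measures on $T^*\mathcal{M}\times T^*\mathcal{M}$.

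The key step is to show that
\[
\sigma_{T^*\mathcal{M}}(dq\,dp)\, \delta_{\Psi_{\Delta t}^{\rm rev}(q,p)}(dq'\,dp') = \sigma_{T^*\mathcal{M}}(dq'\,dp')\, \delta_{\Psi_{\Delta t}^{\rm rev}(q',p')}(dq\,dp).
\]
This I would check by testing against an arbitrary continuous bounded function $\varphi$ on $T^*\mathcal{M}\times T^*\mathcal{M}$: the left-hand side integrates $\varphi$ to $\int \varphi(x,\Psi_{\Delta t}^{\rm rev}(x))\,\sigma_{T^*\mathcal{M}}(dx)$, and performing the change of variables $y = \Psi_{\Delta t}^{\rm rev}(x)$ yields $\int \varphi(\Psi_{\Delta t}^{\rm rev}(y),y)\,\sigma_{T^*\mathcal{M}}(dy)$, using both the measure-preservation and the involutive property $\Psi_{\Delta t}^{\rm rev}\circ\Psi_{\Delta t}^{\rm rev}=\mathrm{Id}$ supplied by Proposition~\ref{prop:PsiA}. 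The latter integral is exactly what the right-hand side produces.

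Once this symmetry of the base measure is established, the identity in the lemma follows by multiplying both sides by $\rme^{-H(q',p')}$ and rearranging: the ratio of the two Dirac--weighted Riemannian measures equals $1$, and only $\rme^{-H(q',p')}/\rme^{-H(q,p)}$ survives. The one subtlety worth signalling in the write-up is that the change-of-variables step is clean on $B$ (where $\Psi_{\Delta t}^{\rm rev}$ is a $C^1$ diffeomorphism) and trivial on $B^c$ (where it is the identity), so one should briefly split the integral according to the decomposition $T^*\mathcal{M} = B \sqcup B^c$ provided by Proposition~\ref{prop:PsiA}; this is the only place where the subtle openness of $B$ (Lemma~\ref{lem:B}) actually matters, and it is the step I would expect to need the most care.
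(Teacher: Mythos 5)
Your proposal is correct and follows essentially the same route as the paper: reduce the claim to the symmetry of the measure $\sigma_{T^*\mathcal M}(dq\,dp)\,\delta_{\Psi_{\Delta t}^{\rm rev}(q,p)}(dq'\,dp')$ under swapping $(q,p)$ and $(q',p')$, verify it against a test function via the change of variables $\Psi_{\Delta t}^{\rm rev}$ split over $B$ and $B^c$, and invoke the involution and measure-preservation properties from Proposition~\ref{prop:PsiA}. No substantive differences from the paper's proof.
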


\begin{proof}
The equality stated in the lemma is equivalent to
\[
\delta_{\Psi_{\Delta t}^{\rm rev}(q',p')}(dq\,dp) \, \sigma_{T^*\mathcal M} (dq'\,dp')=\delta_{\Psi_{\Delta t}^{\rm rev}(q,p)}(dq',dp') \, \sigma_{T^*\mathcal M} (dq\, dp).
\]
The proof of the latter identity is as follows. For a test function $\varphi(q,p,q',p')$, one has:
\begin{align*}
&\int_{(q,p) \in T^* \mathcal M}\int_{(q',p') \in T^* \mathcal M} \varphi (q,p,q',p') \delta_{\Psi_{\Delta t}^{\rm rev}(q',p')}(dq\,dp) \, \sigma_{T^*\mathcal M} (dq'\, dp')\\
& \qquad = \int_{(q',p') \in T^* \mathcal M} \varphi \left(\Psi_{\Delta t}^{\rm rev}(q',p'),q',p'\right) \, \sigma_{T^*\mathcal M} (dq'\, dp')\\
& \qquad = \int_{(q,p) \in B} \varphi (q,p,\left(\Psi_{\Delta t}^{\rm rev}\right)^{-1}(q,p)) \, \sigma_{T^*\mathcal M}(dq\,dp) + \int_{(q,p) \in B^c} \varphi (q,p,\left(\Psi_{\Delta t}^{\rm rev}\right)^{-1}(q,p)) \, \sigma_{T^*\mathcal M}(dq\,dp)\\
& \qquad = \int_{(q,p) \in T^* \mathcal M} \varphi (q,p,\Psi_{\Delta t}^{\rm rev}(q,p)) \, \sigma_{T^*\mathcal M}(dq\,dp)\\
& \qquad = \int_{(q,p) \in T^* \mathcal M}\int_{(q',p') \in T^* \mathcal M} \varphi (q,p,q',p') \delta_{\Psi_{\Delta t}^{\rm rev}(q,p)}(dq'\,dp') \, \sigma_{T^*\mathcal M}(dq\, dp),
\end{align*}
where we used the change of variable $(q,p)=\Psi_{\Delta t}^{\rm rev}(q',p')$ on $B$ and $B^c$ in the third line and the fact that $\left(\Psi_{\Delta t}^{\rm rev}\right)^{-1}=\Psi_{\Delta t}^{\rm rev}$ on both $B$ and $B^c$ in the fourth line, see Proposition~\ref{prop:PsiA}.
\end{proof}

To prove ergodicity, one then has to check irreducibility, see for example~\cite{schuette-98,cances-le-goll-stoltz-07,LBBG16,DMS17} for results in the unconstrained case, which can easily be adapted to constrained dynamics as in~\cite{hartmann-08}.

Let us emphasize that the invariant measure of the Markov chain $(q_n,p_n)_{n \ge 0}$ is $\mu$ whatever the potential function $V$ which is used to evolve the system in one step of the RATTLE scheme (Step~(ii) of Algorithm~\ref{algo:HMC}). The reversibility property is only based on the fact that $\Psi_{\Delta t}^{\rm rev}$ is an involution which preserves the phase space measure $\sigma_{T^* \mathcal M}(dq \,dp)$. In particular, one can simply use $V=0$ in RATTLE, which corresponds to the (constrained) random walk Metropolis Hastings algorithm considered in~\cite{zappa-holmes-cerfon-goodman-17}.

The interest of using the potential $V$ in the Hamiltonian dynamics is that, thanks to the good energy conservation of the RATTLE integrator, it is possible to run the Hamiltonian dynamics over large times while conserving the energy, and thus keeping a small rejection rate~\cite[Sections IX.5 and IX.8]{hairer-lubich-wanner-06}. In practice, it is thus possible to use in Step~(ii) of Algorithm~\ref{algo:HMC} several iterations of the RATTLE scheme (which corresponds to considering $(\tilde{q}^{n+1},\tilde{p}^{n+1})=(\Psi_{\Delta t}^{\rm rev})^K(q^n,\tilde p^n)$ for some integer $K$). There are thus three numerical parameters to choose  in practice: $M$, $\Delta t$ and $K$. The larger $\Delta t$ and $K$, the less correlated the samples, but the larger the rejection rate.

\begin{rem}
  \label{rem:truncated_momenta}
  In order to increase the probability that the forward and reverse projection checks are successful, it may be useful in practice to sample $\tilde p^n$ according to a truncated Gaussian random variable in Step~(i) of the HMC algorithm:
  \begin{equation}
    \label{eq:kappa_truncated}
    \tilde \kappa_q(dp)=\tilde{Z}^{-1} \exp \left(-\frac{p^T M^{-1}p}{2}\right) 1_{\{|p|^2 \le R_{\Delta t}\}}\sigma^{M^{-1}}_{T^*_q \mathcal M}(dp),
  \end{equation}
  for a well chosen $R_{\Delta t}$ (see also Remark~\ref{rem:simple_example} for related discussions). This can easily be done in practice by a rejection procedure. This leads to a dynamics which admits $ \tilde \mu(dq \, dp)=\nu(dq) \, \tilde\kappa_q(dp)$ as an invariant measure. Notice that  $\tilde \mu$ still has $\nu$ as a marginal in $q$.  Of course, a similar discussion holds for any distribution $\tilde\kappa_q(dp)$ on the set $|p|^2 \le R_{\Delta t}$ which is invariant under momentum reversal.
\end{rem}

\begin{rem}[Extra computational cost arising from the reverse projection check]
  In order to perform the reverse projection check, the forces at the proposed position need to be computed. This induces indeed some extra computational cost, which is however mitigated by the following facts: 
\begin{enumerate}[(i)]
\item it is necessary in any case to compute the energy at the proposed position in order to perform the Metropolis acceptance/rejection procedure. In many codes, the computation of the forces and the potential energy are done simultaneously.
\item it is possible to store the forces which are computed at the proposed position. They can be reused if the proposal is accepted. Therefore, additional force computations are performed only in situations when the forward Newton algorithm is successful but rejection occurs later on. In the numerical examples we provide in Section~\ref{sec:numerics} (see in particular the first lines of Table~\ref{tab:rejection}), rejections due to the reverse projection check (either Newton algorithm not well posed when starting from the proposed position or not returning to the initial configuration) account for at most 25\% of the rejections, most of the rejections arising either from the forward Newton algorithm not being successful or from the Metropolis step, depending on the timestep. 
\item let us also recall that simplified force fields can be used in the RATTLE proposals since the Metropolis step corrects for any bias arising from this simplification.
\end{enumerate}
We therefore believe that the extra computational cost should be quite small compared to standard HMC implementations, at least in situations where the number of constraints~$m$ is small compared to the dimension~$d$.
\end{rem}

\subsubsection{The constrained Generalized Hybrid Monte Carlo method}

Following~\cite{horowitz-91}, it is possible to derive variants of the constrained HMC, which are related to discretizations of the underlying constrained Langevin dynamics (see~\cite[Section 2.2.3]{lelievre-rousset-stoltz-book-10} for a general introduction):
\begin{equation}
  \label{eq:Langevin}
  \left\{
  \begin{aligned}
    dq_t& = M^{-1} p_t \, dt \,  ,\\
    dp_t&= -\nabla V(q_t) \, dt - \gamma M^{-1} p_t \, dt + \sqrt{2 \gamma}\,dW_t + \nabla \xi(q_t) \, d \lambda_t \, , \\
    \xi(q_t)&=0 \,,
  \end{aligned}
  \right.
\end{equation}
where $\gamma>0$ is the friction parameter (actually $\gamma$ can be chosen to be a symmetric positive definite matrix), and $(W_t)_{t \ge 0}$ is a standard $d$-dimensional Brownian motion.

The Generalized Hybrid Monte Carlo (GHMC) algorithm is obtained by a splitting technique, considering separately the constrained Ornstein-Uhlenbeck dynamics on $p_t$ on the one hand, and the constrained Hamiltonian dynamics on the other. A typical algorithm based on a Strang splitting and which is a variant of~\cite[Algorithm~3.56]{lelievre-rousset-stoltz-book-10} writes as follows (the sequences $(G^n)_{n \ge 0}$ and $(G^{n+1/2})_{n \ge 0}$ are independent sequences of independent Gaussian vectors in $\R^d$ with identity covariance). See also Section~\ref{sec:summary_algo} for a practical version written in pseudo-code.

\begin{algo}[Constrained Generalized Hybrid Monte Carlo]
  \label{algo:GHMC}
  Starting from $(q^n,p^n) \in T^*\mathcal M$,
  \begin{enumerate}[(i)]
  \item  Evolve the momenta according to the mid-point Euler scheme (with time increment $\Delta t/2$)
    \[
    \left\{
    \begin{aligned}
      &p^{n+1/4} = p^{n} - \frac{\Delta t}{4} \, \gamma \, M^{-1} \left(p^{n} + p^{n+1/4} \right) + \sqrt{\gamma\dt} \, G^{n} + \nabla \xi (q^n) \, \lambda^{n+1/4} \, , \\ 
      &\left[\nabla\xi(q^n)\right]^T M^{-1} p^{n+1/4} = 0 \, .
    \end{aligned}
    \right.
    \]
  \item  Apply one step of the RATTLE dynamics with momentum reversal and reverse projection check: 
    \[
    (\tilde{q}^{n+1},\tilde p^{n+3/4})=\Psi_{\Delta t}^{\rm rev}(q^n,p^{n+1/4}).
    \]
  \item Draw a random variable $U^n$ with uniform law on $(0,1)$:
    \begin{itemize} 
    \item if $U^n \le \exp(-H(\tilde q^{n+1},\tilde p^{n+3/4})+H(q^n,p^{n+1/4}))$, accept the proposal: $({q}^{n+1},p^{n+3/4})=(\tilde{q}^{n+1},\tilde p^{n+3/4})$;
    \item else reject the proposal: $({q}^{n+1},p^{n+3/4})=(q^n, p^{n+1/4})$.
    \end{itemize}
  \item Reverse momenta $\tilde p^{n+1}=-p^{n+3/4}$.
  \item Evolve the momenta according to the mid-point Euler scheme (with time increment $\Delta t/2$)
    \[
    \left\{
    \begin{aligned} 
      &p^{n+1} =  \tilde p^{n+1} - \frac{\Delta t}{4} \gamma M^{-1} ( \tilde p^{n+1} + p^{n+1} ) + \sqrt{\gamma\dt} \, G^{n+1/2} + \nabla \xi (q^{n+1}) \lambda^{n+1} \, ,\\
      &\nabla\xi(q^{n+1})^T M^{-1} p^{n+1} = 0 \, .
    \end{aligned}
    \right. 
    \]
\end{enumerate}
\end{algo}

The Markov chain $(q^n,p^n)_{n \ge 0}$ admits $\mu$ as an invariant measure, as the composition of four steps which are reversible with respect to the measure $\mu$: (i), (ii)-(iii), (iv) and (v). Notice that when $\gamma$ is chosen such that $\frac{\Delta t \gamma M^{-1}}{4} = \rm Id$, the constrained GHMC algorithm reduces to constrained HMC (Algorithm~\ref{algo:HMC}).

In the case when the forward and reverse projection checks are successful (Step (ii)), one can check that this algorithm is a consistent discretization of the constrained Langevin dynamics~\eqref{eq:Langevin}, both in the weak and strong sense (by an analysis similar to~\cite{BV09} for strong convergence). The discretization is of weak order~2 since the rejection rate in the Metropolis step is of order~$\Delta t^3$ (by an analysis similar to the one performed in~\cite{ST18}). Note the importance of Step~(iv) to get this property. This is why this algorithm may be interesting in practice: it also yields some dynamical information, while sampling exactly the stationary measure for~\eqref{eq:Langevin}. In the case when the proposal is rejected, Step (iv) implies that momenta are reversed, so that the trajectory ``goes backward'' in this case; see~\cite[Remark 2.13]{lelievre-rousset-stoltz-book-10} for further discussions, and~\cite{ST18} for an analysis of the weak error of the method.

\begin{rem}
  \label{rem:sample_p}
  As explained above, the implementation of Steps (i) and (v) amounts to a projection in an affine space, and can thus be performed exactly. Let us make this precise, for example for Step (i). In practice, one first computes the predicted unconstrained momentum
  \[
  \tilde{p}^{n+1/4}=\left( {\rm Id} + \frac{\Delta t}{4} \gamma M^{-1}\right)^{-1} \left( \left( {\rm Id} - \frac{\Delta t}{4} \gamma M^{-1}\right)  p^{n} + \sqrt{\gamma\dt} \, G^{n}\right).
  \]
  The Lagrange multiplier $\lambda^{n+1/4}$ is then obtained by solving the following linear system:
  \[
  \left[\nabla \xi(q^n)\right]^T M^{-1} \tilde{p}^{n+1/4} + \left[\nabla \xi(q^n)\right]^T M^{-1}\left( {\rm Id} + \frac{\Delta t}{4} \gamma M^{-1}\right)^{-1} \nabla \xi(q^n) \lambda^{n+1/4} = 0.
  \]
  The invertibility of the matrix $\left[\nabla \xi(q^n)\right]^T M^{-1} \left( {\rm Id} + \frac{\Delta t}{4} \gamma M^{-1}\right)^{-1} \nabla \xi(q^n)$ is a consequence of the invertibility of the matrix $G_M(q^n)$ (see~\eqref{eq:GM}).
  
  By choosing $\gamma$ such that $\frac{\Delta t \gamma M^{-1}}{4} = \rm Id$, this also gives a practical manner to implement Step~(i) in the constrained HMC algorithm~\ref{algo:HMC}.
\end{rem}

\begin{rem}
  Similar ideas can be used to take into account inequality constraints. More precisely, if $\mathcal M=\{q \in \R^d, \xi(q)=0 \text{ and } h(q) \le 0\}$ for some smooth function $h: \R^d \to \R^n$, there are two ways to take into account the additional constraints $h(q) \le 0$:
  \begin{itemize}
  \item simply include a rejection step in the algorithms (in Step (iii) of Algorithm~\ref{algo:HMC} and Algorithm~\ref{algo:GHMC}).
  \item implement a version of RATTLE dynamics with reflection on the boundary $h(q)=0$: this preserves the two fundamental properties of the scheme (measure preservation and involution) and thus gives an algorithm to sample the target distribution, see~\cite{kaufman-12,afshar-domke-15}.
  \end{itemize}
\end{rem}

\section{Numerical simulations}
\label{sec:numerics}

We illustrate in this section the importance of the reverse projection check in order to obtain an unbiased sampling, on a simple problem already considered in~\cite{zappa-holmes-cerfon-goodman-17}. We also discuss the relative efficiency of the algorithms introduced above, comparing in particular the constrained HMC and GHMC approaches. The section starts with a summary of the proposed algorithm in pseudo-code.

\subsection{Summary of the algorithm}
\label{sec:summary_algo}

The complete algorithm is summarized in the pseudo-codes below, see Numerical algorithms~\ref{algo:global}, \ref{algo:momentum_Lagrange_OU}, \ref{algo:momentum_Lagrange_RATTLE} and~\ref{algo:Newton}. The sampling algorithm consists in iterating procedure {\tt ConstrainedGHMC} of the algorithm (Numerical algorithm~\ref{algo:global}), which uses the procedures {\tt LAGRANGE$\_$MOMENTUM$\_$OU} (Numerical algorithm~\ref{algo:momentum_Lagrange_OU}) and {\tt LAGRANGE$\_$MOMENTUM$\_$RATTLE} (Numerical algorithm~\ref{algo:momentum_Lagrange_RATTLE}) to compute the Lagrange mutliplier for momentum constraints in the fluctuation/dissipation and RATTLE steps respectively, and {\tt NEWTON} (Numerical algorithm~\ref{algo:Newton}) to compute the Lagrange multiplier for position constraints. Some additional comments are in order:
\begin{itemize}
\item The algorithm decides to either make a specific choice of a Lagrange multiplier for position constraints, or to reject the possibility of performing a constrained RATTLE scheme. This is performed in practice by checking whether the practical Newton algorithm {\tt NEWTON} returns success starting from the unconstrained update of the position in the RATTLE scheme (see other remarks below for more details). Within our theoretical framework, this is equivalent to checking whether an initial configuration $(q,p)$ belongs to the set~$A$ defined by~\eqref{eq:def_A}.
\item The reversibility check is done by comparing the original position and the position obtained by performing a successful RATTLE step, followed by a momentum inversion, and an additional successful RATTLE step. Comparing the momenta is not necessary since the proof of Lemma~\ref{lem:B} shows that the configurations $(\hat{q},-\hat{p}) = (\Psi_{\Delta t}\circ\Psi_{\Delta t})(q,p)$ and~$(q,p)$ are different if and only if $\hat{q}\neq q$. The latter condition is implemented by checking whether $\|\hat{q}-q\| < \eta_{\rm rev}$ for some tolerance~$\eta_{\rm rev}$.
\item The matrix~$S$ is always invertible in Numerical algorithm~\ref{algo:momentum_Lagrange_OU} as discussed in Remark~\ref{rem:sample_p}. Numerical algorithms~\ref{algo:momentum_Lagrange_OU} and~\ref{algo:momentum_Lagrange_RATTLE} differ by a multiplication by $\left( {\rm Id} + \Delta t \gamma M^{-1}/4\right)^{-1}$, which arises from the specific choice of the discretization of the fluctuation/dissipation part in Algorithm~\ref{algo:GHMC}.
\item The Newton algorithm (Numerical algorithm~\ref{algo:Newton}) returns a boolean telling whether the projection of~$\tilde{q}$ along the direction $M^{-1}\nabla \xi(q)$ was successful, and a value of the parameter~$\theta$ such that $\xi(\tilde{q}+M^{-1}\nabla \xi(q)\theta) = 0$ (up to the tolerance $\eta_{\rm rev}$). 

  Note that a linear system has to be solved at each iteration. This can be done by computing the inverse of the matrix $[\nabla \xi(\tilde{q} + M^{-1} \nabla \xi(q) \theta)]^T M^{-1} \nabla \xi(q)$ as written in the pseudo-code. In this case, one should first check whether this matrix is indeed numerically invertible by checking for instance that its condition number (the relative range of its singular values) is not too large. Alternatively, one could solve the associated linear system using iterative methods. If the solution is not well posed, the algorithm exits the loop and returns {\tt FALSE} for the boolean indicating the success of the procedure.

  The user may want to consider a not too large maximal value $N_{\rm newt}$ for the number of inner loops in the Newton algorithm, especially for high-dimensional constraints for which the inversion of the matrix scales as $\mathrm{O}(m^3)$. When the Newton part of the algorithm is significant, it may be better to give up more easily, and possibly to use a larger tolerance $\eps_{\rm newt}$, at the cost of course of numerical biases. As discussed at the end of Section~\ref{sec:ex2}, it may be beneficial in those situations to consider projection procedures which do not require inverting a matrix or solving a linear system, as mentioned in~\cite[Section~7.2]{leimkuhler-reich-04}.

  Finally, the considered convergence criterion ensures that the value of~$\xi$ for the projected position is smaller than the convergence criterion $\eps_{\rm newt}$, and also that the difference between the iterates for the predicted projections on the submanifold are sufficiently close. It is of course possible to normalize differently the convergence criterion, for instance by working with relative errors on the positions, or by weighting the factors defining the criterion using different positive multiplicative constants.
\end{itemize}
Finally, let us stress again that the main result of the present paper (the reversibility of the proposed schemes) does not hold {\em exactly} for the implemented method because projections onto the sub-manifold are only performed in practice up to some numerical tolerances ($\eta_{\rm rev}$ and $\varepsilon_{\rm newt}$). The idealized case of the Newton algorithm with exact projection ($\varepsilon_{\rm newt} = 0$) is made precise in Algorithm~\ref{algo:idealized_Newton} above (see Section~\ref{sec:ex2}), and indeed gives rise to exactly reversible schemes.

\begin{algorithm}
\caption{\label{algo:global} One step of the practical constrained HMC algorithm with reverse projection}
\begin{algorithmic}
  \State Parameters: $\gamma$ (friction), $\Delta t$ (timestep), $\eta_{\rm rev}$ (tolerance for reverse check)
  \Procedure{ConstrainedGHMC}{$q$,$p$}
  \State $G \sim \mathcal{N}(0,\mathrm{Id})$
  \State $p \leftarrow ( {\rm Id} + \Delta t \gamma M^{-1}/4)^{-1} \left[ ( {\rm Id} - \Delta t \gamma M^{-1}/4)  p + \sqrt{\gamma\dt} \, G\right]$
  \State $\lambda = {\tt LAGRANGE\_MOMENTUM\_OU}(q,p)$
  \State $p \leftarrow p + ( {\rm Id} + \Delta t \gamma M^{-1}/4)^{-1} \nabla \xi(q) \lambda$
  \Comment{Integration of the fluctuation/dissipation for $\Delta t/2$}
  \State ${\tt Reject} = {\tt TRUE}$
  \State $\tilde{p} =  p - \Delta t \nabla V (q)/2$ and $\tilde{q} =  q + \Delta t \, M^{-1} \, \tilde{p}$
  \State Compute $({\tt Success\_forward\_RATTLE}, \theta) = {\tt NEWTON}(q,\tilde{q})$
  \If{${\tt Success\_forward\_RATTLE}$}
  \State $\tilde{p} \leftarrow \tilde{p} + \nabla \xi(q) \theta/\Delta t$ and $\tilde{q} \leftarrow \tilde{q} + M^{-1}\nabla \xi(q) \theta$
  \State $\tilde{p} \leftarrow \tilde{p} - \Delta t \nabla V(\tilde{q})/2$
  \State $\lambda = {\tt LAGRANGE\_MOMENTUM\_RATTLE}(\tilde{q},\tilde{p})$
  \State $\tilde{p} \leftarrow \tilde{p} + \nabla \xi(\tilde{q}) \lambda$
  \Comment{Constrained RATTLE -- proposition}
  \State $\hat{p} = -\tilde{p}$ and $\hat{q} = \tilde{q}$
  \State $\hat{p} \leftarrow \hat{p} - \Delta t \nabla V (\hat{q})/2$ and $\hat{q} \leftarrow  \hat{q} + \Delta t \, M^{-1} \, \hat{p}$
  \State Compute $({\tt Success\_backward\_RATTLE}, \theta) = {\tt NEWTON}(\tilde{q},\hat{q})$ 
  \If{${\tt Success\_backward\_RATTLE}$}
  \State $\hat{p} \leftarrow \hat{p} + \nabla \xi(\tilde{q}) \theta/\Delta t$ and $\hat{q} \leftarrow \hat{q} + M^{-1}\nabla \xi(\tilde{q}) \theta$
  \State $\hat{p} \leftarrow \hat{p} - \Delta t \nabla V(\hat{q})/2$
  \State $\lambda = {\tt LAGRANGE\_MOMENTUM\_RATTLE}(\hat{q},\hat{p})$
  \State $\hat{p} \leftarrow \hat{p} + \nabla \xi(\hat{q}) \lambda$
  \Comment{Constrained RATTLE -- reverse move}
  \If{$\| \hat{q}-q\| < \eta_{\rm rev}$}
  \Comment{Constrained RATTLE -- checking reversibility}
  \State $U \sim \mathcal{U}([0,1])$
  \State $\Delta H = H(\tilde{q},\tilde{p}) - H(q,p)$
  \If{$\log(U) \leq -\Delta H$}
  \Comment{Constrained RATTLE -- Metropolis acceptance/rejection}
  \State ${\tt Reject} = {\tt FALSE}$
  \EndIf
  \EndIf
  \EndIf
  \EndIf
  \If{${\tt Reject}$}
  \State $\tilde{p} = -p$ and $\tilde{q} = q$ 
  \EndIf
  \State $\tilde{G} \sim \mathcal{N}(0,\mathrm{Id})$
  \State $\tilde{p} \leftarrow ( {\rm Id} + \Delta t \gamma M^{-1}/4)^{-1} \left[ ( {\rm Id} - \Delta t \gamma M^{-1}/4) \tilde{p} + \sqrt{\gamma\dt} \, \tilde{G}\right]$
  \State $\lambda = {\tt LAGRANGE\_MOMENTUM\_OU}(\tilde{q},\tilde{p})$
  \State $\tilde{p} \leftarrow \tilde{p} + ( {\rm Id} + \Delta t \gamma M^{-1}/4)^{-1} \nabla \xi(\tilde{q}) \lambda$
  \Comment{Integration of the fluctuation/dissipation for $\Delta t/2$}
  \State {\bf return} $\tilde{q},\tilde{p}$
  \EndProcedure
\end{algorithmic}
\end{algorithm}

\begin{algorithm}
\caption{\label{algo:momentum_Lagrange_OU} Computation of the Lagrange multiplier for momentum constraints in OU part}
\begin{algorithmic}
  \Procedure{LAGRANGE$\_$MOMENTUM$\_$OU}{$q$,$p$}
  \State $S = \left[\nabla \xi(q)\right]^T M^{-1}\left( {\rm Id} + \Delta t \gamma M^{-1}/4\right)^{-1} \nabla \xi(q)$
  \State $b = \left[\nabla \xi(q)\right]^T M^{-1} p$
  \State {\bf return} $\lambda = -S^{-1}b$
  \EndProcedure
\end{algorithmic}
\end{algorithm}

\begin{algorithm}
\caption{\label{algo:momentum_Lagrange_RATTLE} Computation of the Lagrange multiplier for momentum constraints in RATTLE part}
\begin{algorithmic}
  \Procedure{LAGRANGE$\_$MOMENTUM$\_$RATTLE}{$q$,$p$}
  \State $S = \left[\nabla \xi(q)\right]^T M^{-1} \nabla \xi(q)$
  \State $b = \left[\nabla \xi(q)\right]^T M^{-1} p$
  \State {\bf return} $\lambda = -S^{-1}b$
  \EndProcedure
\end{algorithmic}
\end{algorithm}

\begin{algorithm}
\caption{\label{algo:Newton} Newton algorithm}
\begin{algorithmic}
  \State Parameters: $N_{\rm newt}$ (maximal number of iterations), $\varepsilon_{\rm newt}$ (tolerance for convergence)
  \Procedure{NEWTON}{$q$,$\tilde{q}$}
  \State Set $k = 0$, $\eta = 2 \varepsilon_{\rm newt}$, $\theta = 0$, $\theta_{\rm old} = 0$
  \While{($k \leq N_{\rm newt}$ {\bf and} $\eta > \varepsilon_{\rm newt}$)}
  \State $k \leftarrow k+1$
  \State $\theta_{\rm old} \leftarrow \theta$
  \If{$[\nabla \xi(\tilde{q} + M^{-1} \nabla \xi(q) \theta)]^T M^{-1} \nabla \xi(q)$ is invertible}
  \State $\theta \leftarrow  \theta - \left( [\nabla \xi(\tilde{q} + M^{-1} \nabla \xi(q) \theta)]^T M^{-1} \nabla \xi(q) \right)^{-1} \xi\left(\tilde{q} + M^{-1} \nabla \xi(q) \theta\right)$
  \Else
  \State $k \leftarrow N_{\rm newt} +1$ and $\eta = 2\varepsilon_{\rm newt}$
  \EndIf
  \State $\eta \leftarrow \max\left(\left|\theta-\theta_{\rm old}\right|,\xi\left(\tilde{q}+M^{-1}\nabla \xi(q)\theta\right)\right)$
  \EndWhile
  \If{$\eta < \varepsilon_{\rm newt}$}
  \State ${\tt Success\_Newton} = {\tt TRUE}$
  \Else
  \State ${\tt Success\_Newton} = {\tt FALSE}$
  \EndIf
  \State \textbf{return} (${\tt Success\_Newton}$, $\theta$)
  \EndProcedure
\end{algorithmic}
\end{algorithm}

\subsection{Numerical results}

\paragraph{Description of the system.}
The configuration of the system is $q = (x,y,z) \in \mathbb{R}^3$. The submanifold $\mathcal M=\{q \in \mathbb{R}^3, \, \xi(q) = 0\}$ is a three-dimensional torus, defined with the function
\[
\xi(q) = \left(R - \sqrt{x^2+y^2}\right)^2 + z^2 - r^2,
\]
where $r>0$ and $R>0$ are two parameters such that $r<R$.
All simulations are started with the initial condition $q^0 = (R+r,0,0)$. The Lagrange multipliers $\lambda^{n+1/2}$ in the first part of the RATTLE scheme are found by solving nonlinear problems of the form
\[
\text{find $a \in \R$ such that }\xi\Big(\tilde{q}+a\nabla \xi\left(q\right)\Big) = 0,
\]
using a Newton's algorithm initialized at $a^0 = 0$ and with termination criterion $|a^{n+1}-a^n| |\nabla \xi\left(q\right) | \leq \eta_{\rm newt}$ (since the error we consider here is an absolue error on the positions rather than a relative one on the Lagrange multipliers, we changed the notation $\varepsilon_{\rm newt}$ from Section~\ref{sec:ex2} to $\eta_{\rm newt}$). 


The GHMC scheme we use is obtained by a variant of Algorithm~\ref{algo:GHMC}, namely a Lie--Trotter splitting of the dynamics where the momenta are first updated as
\[
p^{n+1/4} = P(q^n) \left[ \alpha p^n + \sqrt{1-\alpha^2} \, G^n\right] \text{  where  } P(q) = \mathrm{Id} - \frac{\nabla \xi(q) \otimes \nabla \xi(q)}{|\nabla \xi(q)|^2},
\]
and then a moved is proposed using the integrator $\Psi_{\Delta t}^{\rm rev}$, which is finally accepted or rejected according to the Metropolis-Hastings rule. The parameter $\alpha$ is defined as a function of the parameter~$\gamma > 0$ appearing in~\eqref{eq:Langevin} as
\[
\alpha = \exp (-\gamma \Delta t),
\]
which gives a consistent discretization of the projected Ornstein-Uhlenbeck process on the momenta. The potential function defining the target measure~\eqref{eq:nu} is $V(q) = k|q|^2/2$ for some $k \in \mathbb{R}$, unless otherwise mentioned. In the integrator $\Psi_{\Delta t}^{\rm rev}$, we will either use the force $-\nabla V$ in the Hamiltonian dynamics, or the $0$ force (random walk proposal). In the simulation results reported below, we set $R = 1$ and $r = 0.5$. For the Newton's solver, the tolerance for the convergence of the Lagrange multiplier is fixed to $\eta_{\rm newt} = 10^{-12}$, and limit the number of Newton's iterations to~$N_{\rm newt} = 100$. For the reverse projection check, we use $\eta_{\rm rev} = 10^{-12}$ unless otherwise stated. 

\paragraph{Bias in the distributions.}
We first check how important the reverse projection check is to obtain unbiased samples. To this end, we rely on the following parameterization:
\[
(x,y,z) = \Big( (R+r \cos \phi)\cos\theta, (R+r \cos \phi)\sin\theta, r \sin \phi) \Big),
\]
where $\theta,\phi \in [0,2\pi)$. The random variables~$(\theta,\phi)$ are independent under the invariant measure~\eqref{eq:nu}. The spherical symmetry of the problem implies that $\theta$ should be uniformly distributed on~$[0,2\pi)$. When $k=0$, the theoretical distribution of~$\phi$ has density~\cite{DHS13}
\begin{equation}
  \label{eq:ref_dist}
  m(\phi) = \frac{1}{2\pi}\left(1 + \frac{r}{R} \cos \phi \right).
\end{equation}
We present in Figure~\ref{fig:histo} histograms of the distributions of the sampled angles~$\phi$ using the GHMC scheme for a simulation over $N_{\rm iter} = 10^9$ steps, with $\Delta t = 1$ and $\alpha = 0.5$. The interval $[0,2\pi)$ is decomposed into $N_\phi = 100$ bins of equal widths. The full reverse projection check corresponds to $\eta_{\rm rev} = 10^{-12}$; while the partial reverse projection check corresponds to $\eta_{\rm rev} = 100$ (\emph{i.e.} the Newton's algorithm converges for $\Psi_{\Delta t}$ and $\Psi_{\Delta t} \circ \Psi_{\Delta t}$ but we do not check the equality $(\Psi_{\Delta t} \circ \Psi_{\Delta t})(q^n,p^n) = (q^n,p^n)$). Note that a clear bias on the distributions can be observed when the reverse check is only partially enforced. This shows how important it is to verify that the RATTLE part of the dynamics is actually reversible, and not only well posed for the forward and backward moves. Of course, the bias decreases as $\Delta t$ decreases since the number of rejections related to mismatches $(\Psi_{\Delta t} \circ \Psi_{\Delta t})(q^n,p^n) \neq (q^n,p^n)$ decreases (see the following discussion). 

\begin{figure}
  \includegraphics[width=0.48\textwidth]{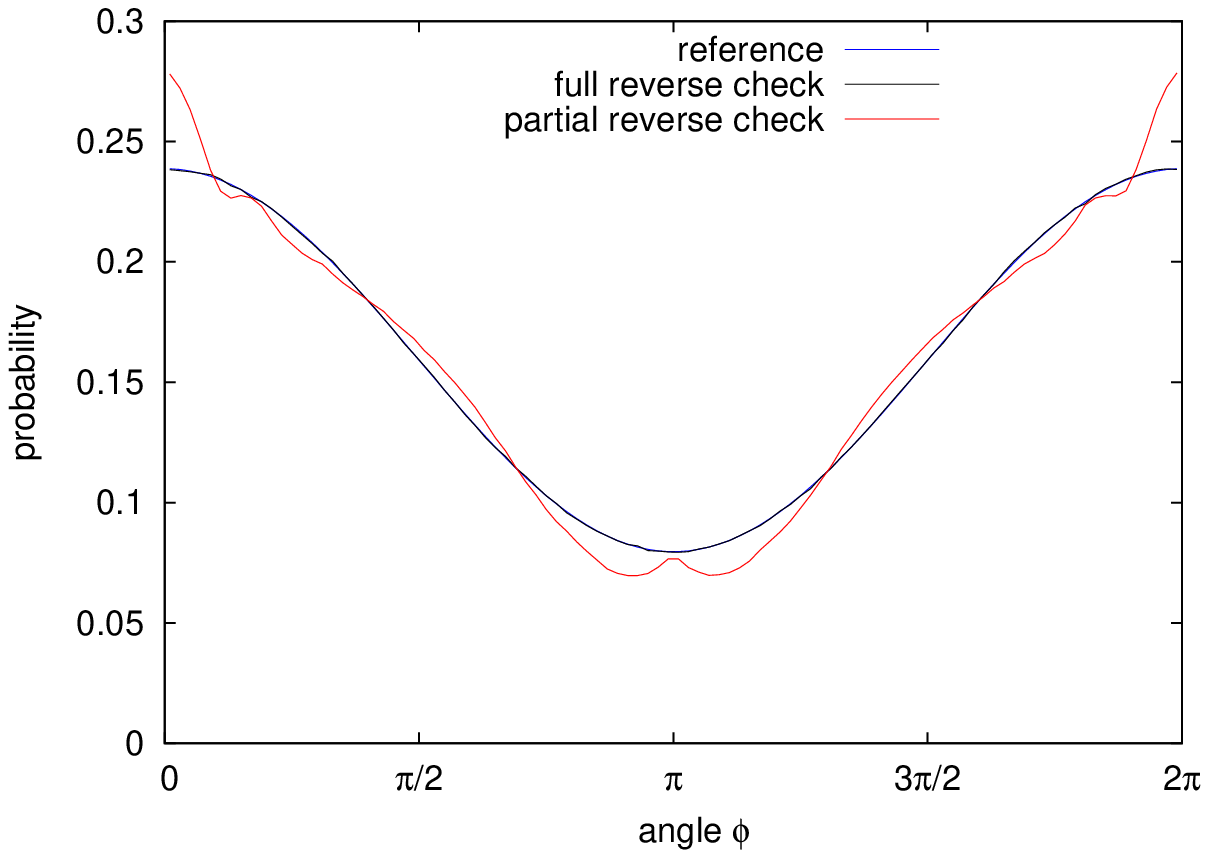}
  \includegraphics[width=0.48\textwidth]{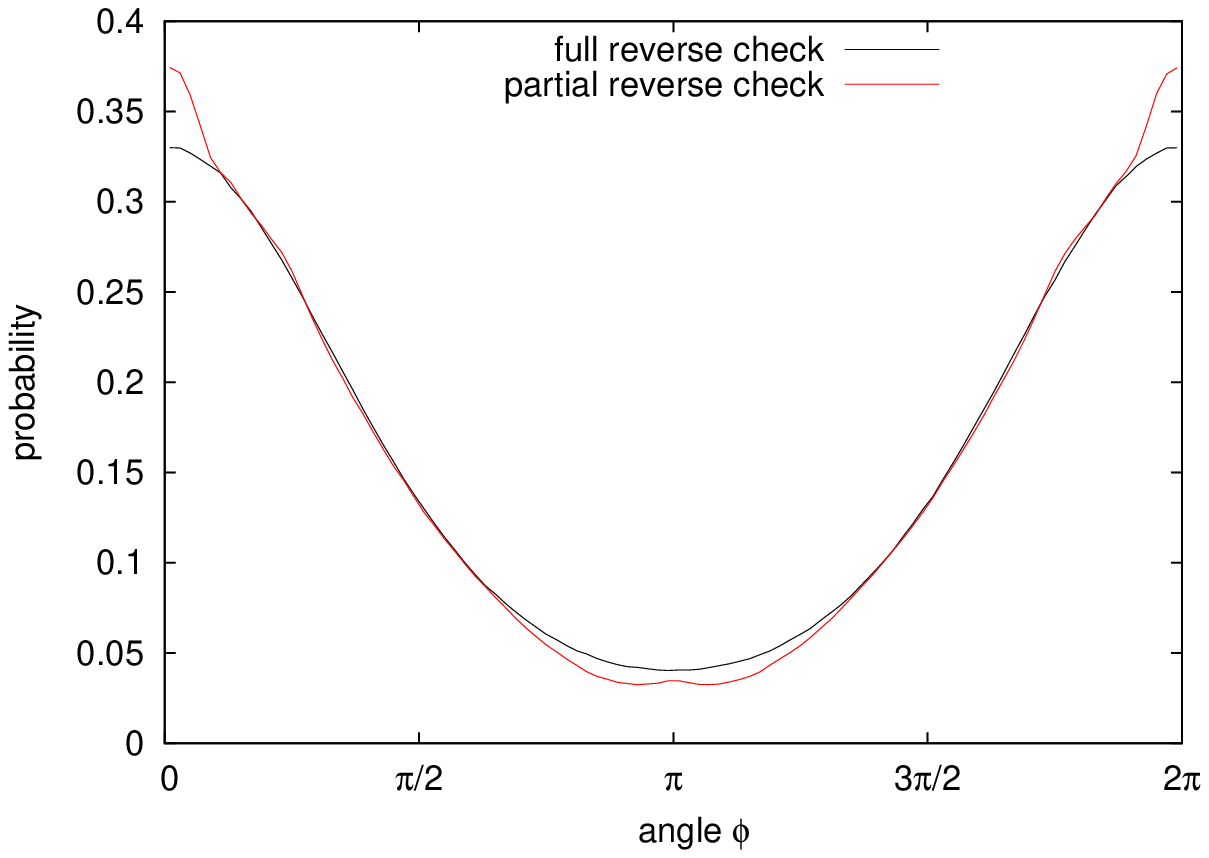}
  \caption{\label{fig:histo} Histograms of the sampled angles~$\phi$ with the GHMC scheme, with full or partial reverse projection check, for $\Delta t = 1$. Left: $k=0$. Right: $k=1$. For $k=0$, one can check that the distribution obtained with the reverse check coincides with the theoretical reference~\eqref{eq:ref_dist}.}
\end{figure}

\paragraph{Rejection rates.}
Let us now study more carefully the rejection rates of the various schemes we consider here, with the constant in the potential set to $k=1$: constrained Metropolis random walk (Algorithm~\ref{algo:HMC} with $k=0$ in the proposal but $k=1$ in the acceptance/rejection criterion; this corresponds to the method proposed in~\cite{zappa-holmes-cerfon-goodman-17}), constrained MALA (Algorithm~\ref{algo:HMC} with $k=1$ in the proposal) and GHMC (Algorithm~\ref{algo:GHMC}). Mean rejections rates are obtained as time averages over trajectories of $N_{\rm iter} = 10^9$ iterations. We distinguish rejections due to (i) the Newton's algorithm not converging for RATTLE starting from a given configuration~$(q^n,p^n)$ (termed ``Newton forward'' in the sequel), (ii) the Newton's algorithm not converging for RATTLE starting from the proposed configuration $\Psi_{\Delta t}(q^n,p^n)$ (called ``Newton reverse''), (iii) the non-reversibility of the RATTLE scheme understood as $q^n \neq Q^n$ with $(Q^n,P^n) = (\Psi_{\Delta t} \circ \Psi_{\Delta t})(q^n,p^n)$ (termed ``non-reversibility'' in the sequel), and (iv) rejections in the Metropolis acceptance/rejection criterion (called ``Metropolis'').

Table~\ref{tab:rejection} presents rejection rates for various values of the parameters. Note first that the total rejection rate increases with the timestep; and that rejection rates for MALA and GHMC are the same, as expected, since the rejection rates correspond to the same average quantities in the RATTLE part of the dynamics under the canonical measure~\eqref{eq:mu}. Most of the rejections are in fact due to the Newton's algorithm not converging for RATTLE starting from $(q^n,p^n)$. On the contrary, once $\Psi_{\Delta t}(q^n,p^n)$ has been obtained, Newton's iterations to apply RATTLE starting from $(\Psi_{\Delta t}\circ\Psi_{\Delta t})(q^n,p^n)$ almost always converge. It is more interesting to consider the rejection rate associated with the non-reversibility criterion: while this criterion is almost always satisfied for small timesteps, it is not for larger timesteps. For MALA and GHMC algorithms with $\Delta t = 1$, about 15\% of moves are rejected because of this condition (see also Figure~\ref{fig:residence}). Comparatively, the rejections due to the Metropolis criterion are less significant. The difference between RW and MALA/GHMC is that the Metropolis rejection rate is always larger with RW, but the rejection rate associated with the non-reversibility condition is smaller for large timesteps. 

\begin{table}
\begin{tabular}{cccccc}
\hline
Method & Total & Newton forward & Newton reverse & non-reversibility & Metropolis \\
\hline
MRW $\dt = 1$ & 0.675 & 0.562 & $3.02 \times 10^{-4}$ & 0.0742 & 0.0385 \\
MALA $\dt = 1$ & 0.675 & 0.509 & $5.83 \times 10^{-4}$ & 0.149 & 0.0167 \\
GHMC $\dt = 1$, $\alpha=0.1$ & 0.675 & 0.509 & $5.83 \times 10^{-4}$ & 0.149 & 0.0167 \\
GHMC $\dt = 1$, $\alpha=0.5$ & 0.675 & 0.509 & $5.83 \times 10^{-4}$ & 0.149 & 0.0167 \\
GHMC $\dt = 1$, $\alpha=0.9$ & 0.675 & 0.509 & $5.83 \times 10^{-4}$ & 0.149 & 0.0167 \\
\hline
MRW $\dt = 0.3$ & 0.158 & 0.0803 & $1.06 \times 10^{-4}$ & 0.0127 & 0.0652 \\
MALA $\dt = 0.3$ & 0.107 & 0.0763 & $1.22 \times 10^{-4}$ & 0.0138 & 0.0168 \\
GHMC $\dt = 0.3$, $\alpha=0.1$ & 0.107 & 0.0763 & $1.22 \times 10^{-4}$ & 0.0138 & 0.0168 \\
GHMC $\dt = 0.3$, $\alpha=0.5$ & 0.107 & 0.0763 & $1.22 \times 10^{-4}$ & 0.0138 & 0.0168 \\
GHMC $\dt = 0.3$, $\alpha=0.9$ & 0.107 & 0.0763 & $1.22 \times 10^{-4}$ & 0.0138 & 0.0168 \\
\hline
MRW $\dt = 0.1$ & 0.0259 & $5 \times 10^{-7}$ & 0 & $7 \times 10^{-8}$ &  0.0259 \\
MALA $\dt = 0.1$ & $6.73 \times 10^{-4}$ & $5 \times 10^{-7}$ & $10^{-9}$ & $5 \times 10^{-8}$ & $6.73 \times 10^{-4}$ \\
GHMC $\dt = 0.1$, $\alpha=0.1$ & $6.72 \times 10^{-4}$ & $5 \times 10^{-7}$ & $10^{-9}$ & $6 \times 10^{-8}$ & $6.72 \times 10^{-4}$ \\
GHMC $\dt = 0.1$, $\alpha=0.5$ & $6.73 \times 10^{-4}$ & $5 \times 10^{-7}$ & $2 \times 10^{-9}$ & $8 \times 10^{-8}$ & $6.72 \times 10^{-4}$ \\
GHMC $\dt = 0.1$, $\alpha=0.9$ & $6.74 \times 10^{-4}$ & $5 \times 10^{-7}$ & 0 & $7 \times 10^{-8}$ & $6.73 \times 10^{-4}$ \\
\hline
\end{tabular}
\caption{\label{tab:rejection} MRW: Metropolis random walk. Total: total rejection rate. Newton forward: rejection rate corresponding to failed convergence of Newton in the RATTLE proposal (see Step~1 in Section~\ref{sec:psi_B}). Newton reverse: rejection rate corresponding to failed convergence of Newton in the reversal check (see Step~3 in Section~\ref{sec:psi_B}). Non-reversibility: rejection rate corresponding to the fact that the initial position is different from the one obtained by the reverse move (see Step 5 in Section~\ref{sec:psi_B}). Metropolis: rejection rate arising from the Metropolis acceptance/rejection step.}
\end{table} 

\paragraph{Relevance of the full reverse projection check.} Let us finally demonstrate that the reverse projection check is indeed useful in practice, by first determining the optimal timestep in some sense, and then checking that the rejection rate due to $(q^n,p^n) \neq (\Psi_{\Delta t} \circ \Psi_{\Delta t})(q^n,p^n)$ is not small for this timestep. To this end, we consider a double well potential
\[
V(x,y,z) = k\left(x^2-R^2\right)^2,
\] 
again on the torus introduced above. Note that the target measure~\eqref{eq:mu} then concentrates around the portions of the torus corresponding to~$x=-R$ and $x=R$. Typical realizations of trajectories obtained with the GHMC scheme for $k=5$ and $\alpha = 0.5$ are presented in Figure~\ref{fig:traj}. One observes indeed two metastable states, respectively around $x=-R$ and $x=R$. For large timesteps, one observes long durations over which the trajectory remains stuck at the same position, which corresponds to successive rejections of the proposed moves.

\begin{figure}
  \includegraphics[width=0.48\textwidth]{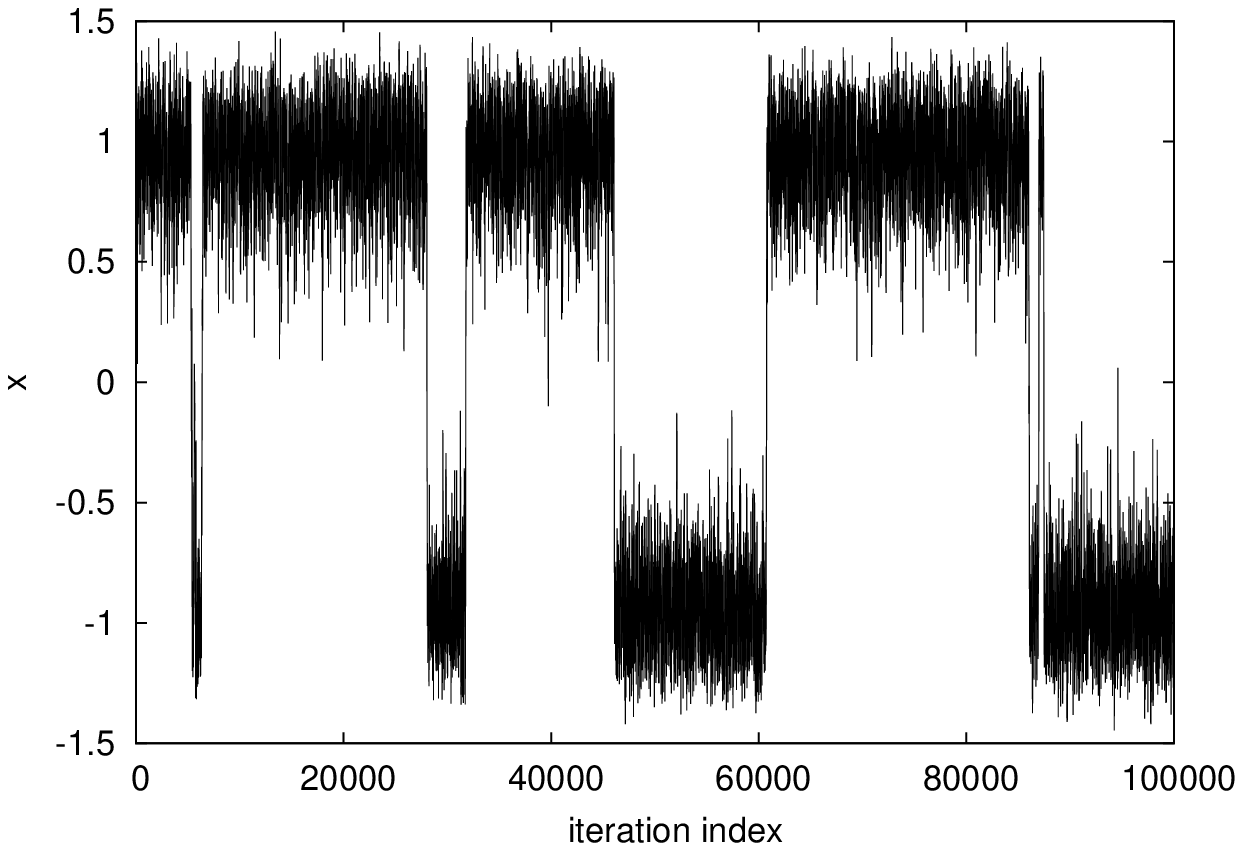}
  \includegraphics[width=0.48\textwidth]{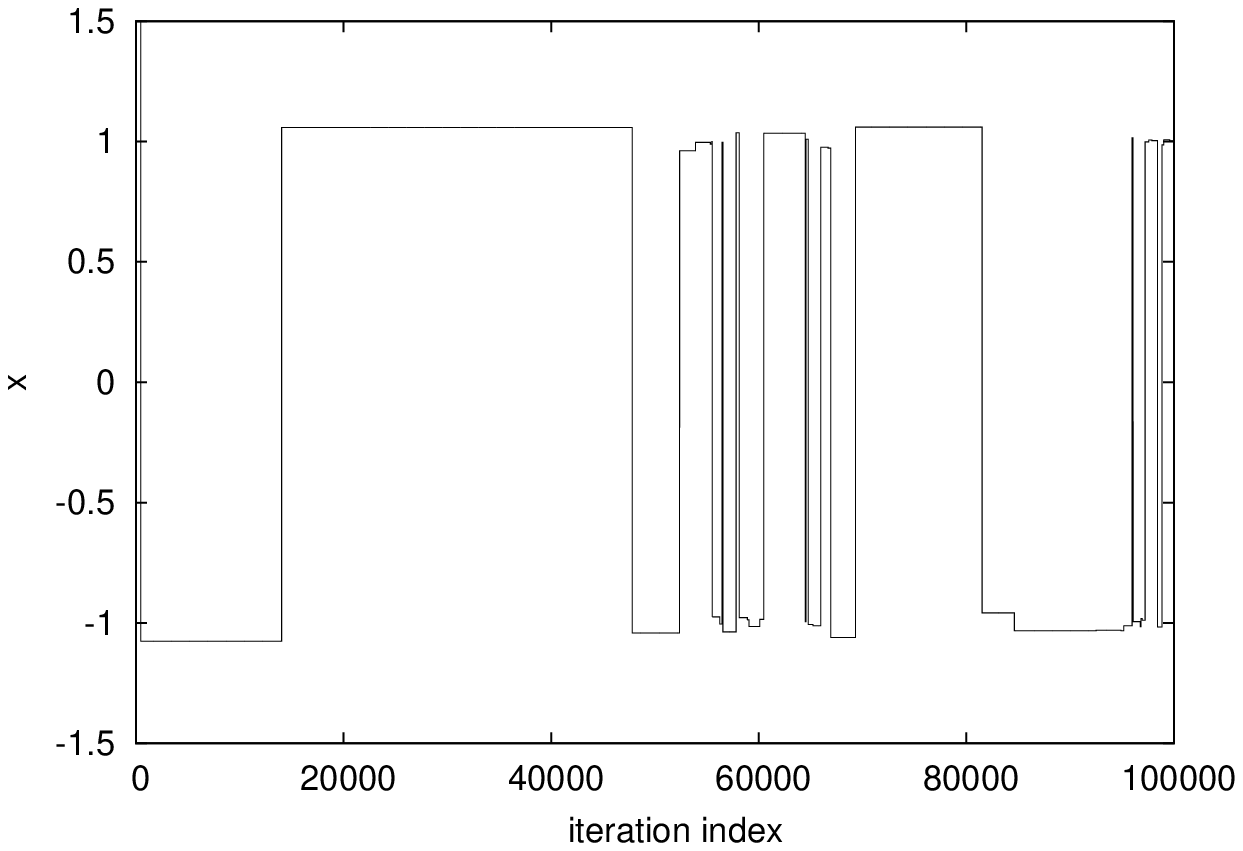}
  \caption{\label{fig:traj} Typical trajectories of the GHMC dynamics for the double well potential. Left: $\Delta t = 0.05$. Right: $\Delta t = 3$.}
\end{figure}

For any such trajectory, we compute the mean residence duration in metastable states, namely the average number of steps it takes to go from the vicinity of one minimum of~$V$ to the other (this does not depend on the starting minimum by the symmetry of the problem). Let us emphasize that we count the number of iterations, and not the corresponding physical time, since the timestep is seen here as some adjustable parameter. More precisely, we consider the initial condition $q^0 = (R+r,0,0)$ as well as the additional variable $\Theta^0 = 1$. This additional variable keeps track of the metastable state in which the dynamics is: it is equal to~1 when the sampled positions are close to the minimum around~$R$ and equal to~$-1$ when~$x^n$ is around~$-R$. We define the following stopping times and values of the additional variable: $\tau^0 = 0$, and 
\[
\tau^{k+1} = \min \{ n > \tau^k \, | \, \Theta^k x^n < -R \}, \qquad \Theta^{k+1} = -\Theta^k.
\]
Denoting by~$K$ the (random) number of successful switches for $N_{\rm iter}$ iteration steps, the mean residence duration is estimated as 
\[
\widehat{\tau} = \frac{1}{K} \sum_{k=1}^K \tau^k.
\]
We present in Figures~\ref{fig:residence} and~\ref{fig:residence_V0} the mean residence duration as a function of the timestep, for HMC and GHMC for two values of $\gamma$, using either $\nabla V$ in the RATTLE scheme or setting it to~0. HMC in the latter case then reduces to a Metropolis random walk. Note that, for small timesteps, the mean residence duration scales as $1/\Delta t$ for GHMC (which is indeed a discretization of the Langevin dynamics with a timestep~$\Delta t$), while it scales as $1/\Delta t^2$ for MALA (which is indeed a discretization of the constrained overdamped Langevin dynamics with timestep~$\Delta t^2/2$, see the discussion at the beginning of Section~\ref{sec:constrained_HMC}).

The results show that the optimal timestep, namely the one which minimizes the mean residence duration, is of the order of~0.7 when $\nabla V$ is used in the RATTLE proposal. For such timesteps, the rejections due to the non reversibility condition $(q^n,p^n) \neq (\Psi_{\Delta t} \circ \Psi_{\Delta t})(q^n,p^n)$ are of the order of 15-20\%, the total rejection rate being about 90\%. When forces are set to~0 in the RATTLE proposal, the optimal timestep is of the order of~1, the rejections due to the non reversibility condition are about 5\%, for a total rejection rate of 80-85\%. Note that, in this specific low dimensional example, random-walk type proposals (obtained by setting forces to~0 in the RATTLE proposal) are slightly more efficient for large timesteps in terms of mean residence duration. However, including gradient forces in the proposal is known to be important in some high dimensional cases, see for example~\cite{RR98}. In any case, at least on this simple example, all the algorithms are the most efficient for timesteps which indeed require a full reverse projection check.

\begin{figure}
  \includegraphics[width=0.48\textwidth]{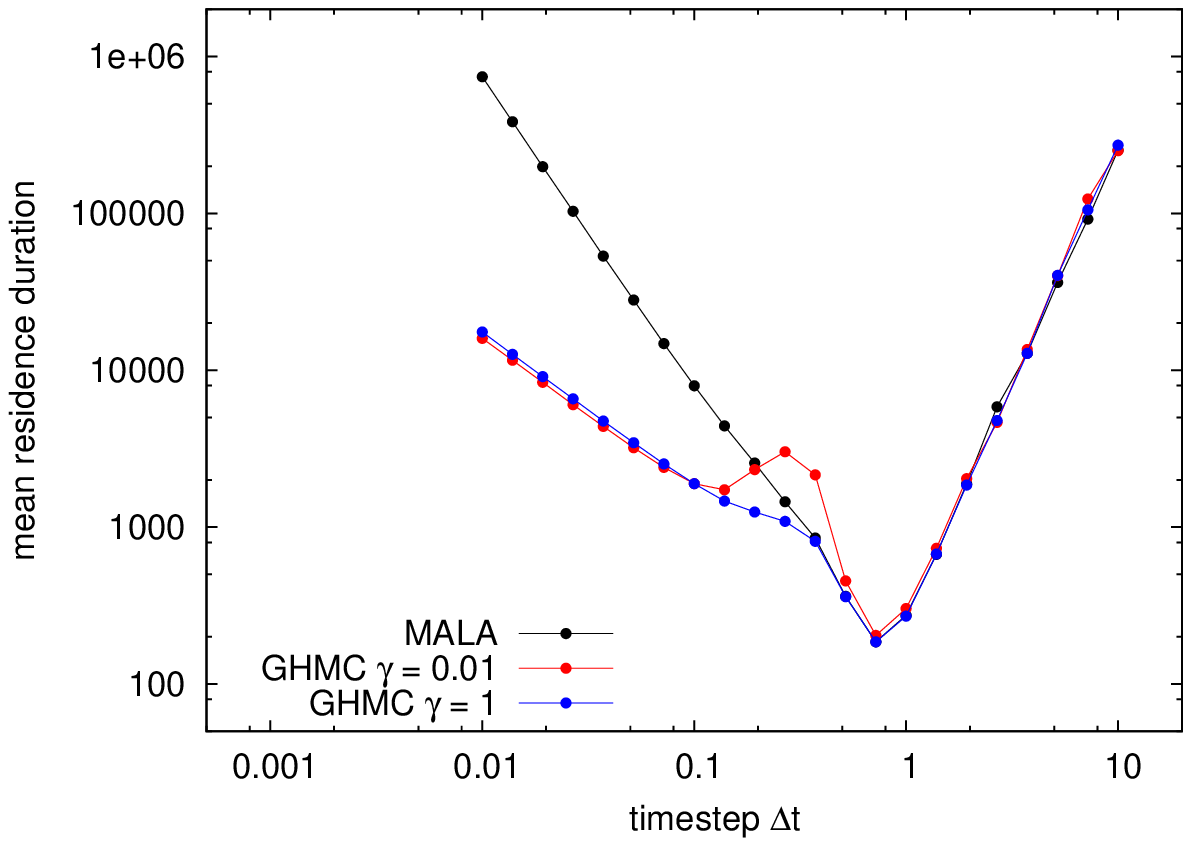}
  \includegraphics[width=0.48\textwidth]{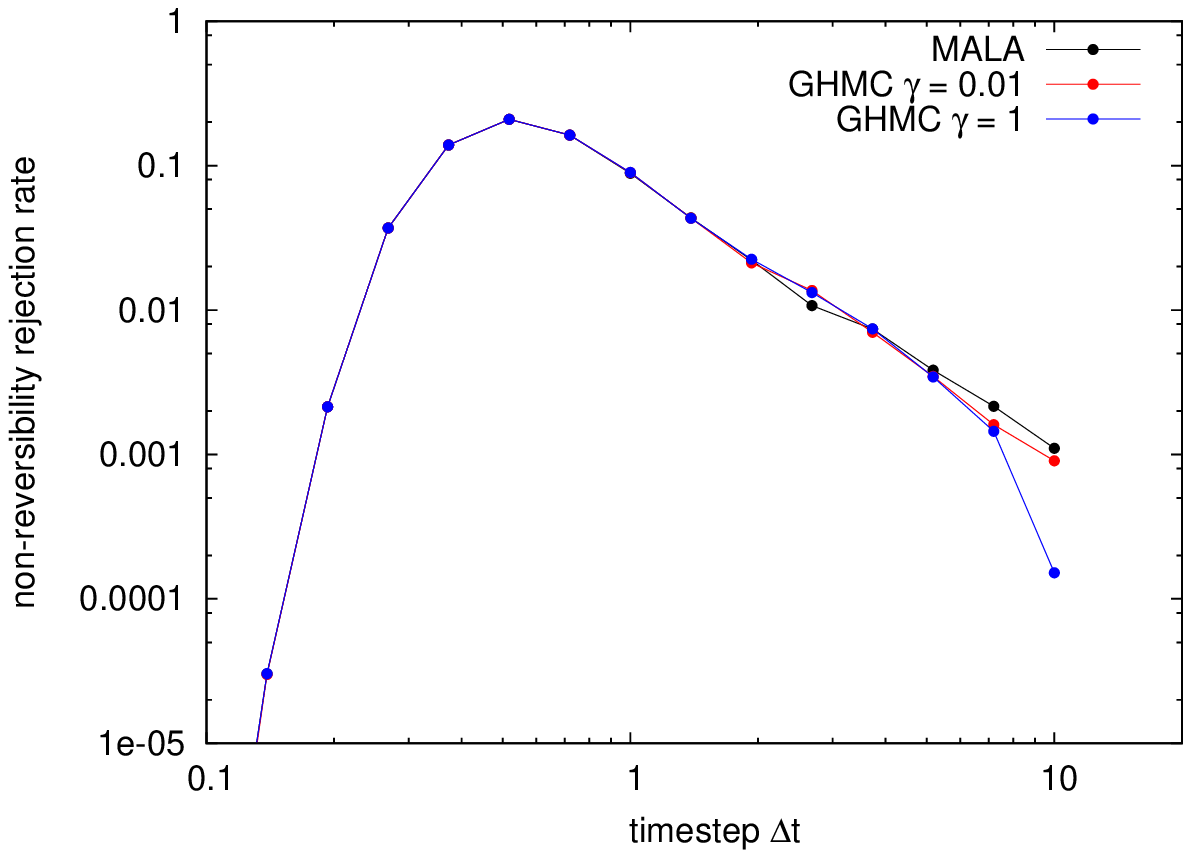}
  \caption{\label{fig:residence} Left: mean residence duration as a function of the timestep. Right: non-reversibility rejection rate.}
\end{figure}

\begin{figure}
  \includegraphics[width=0.48\textwidth]{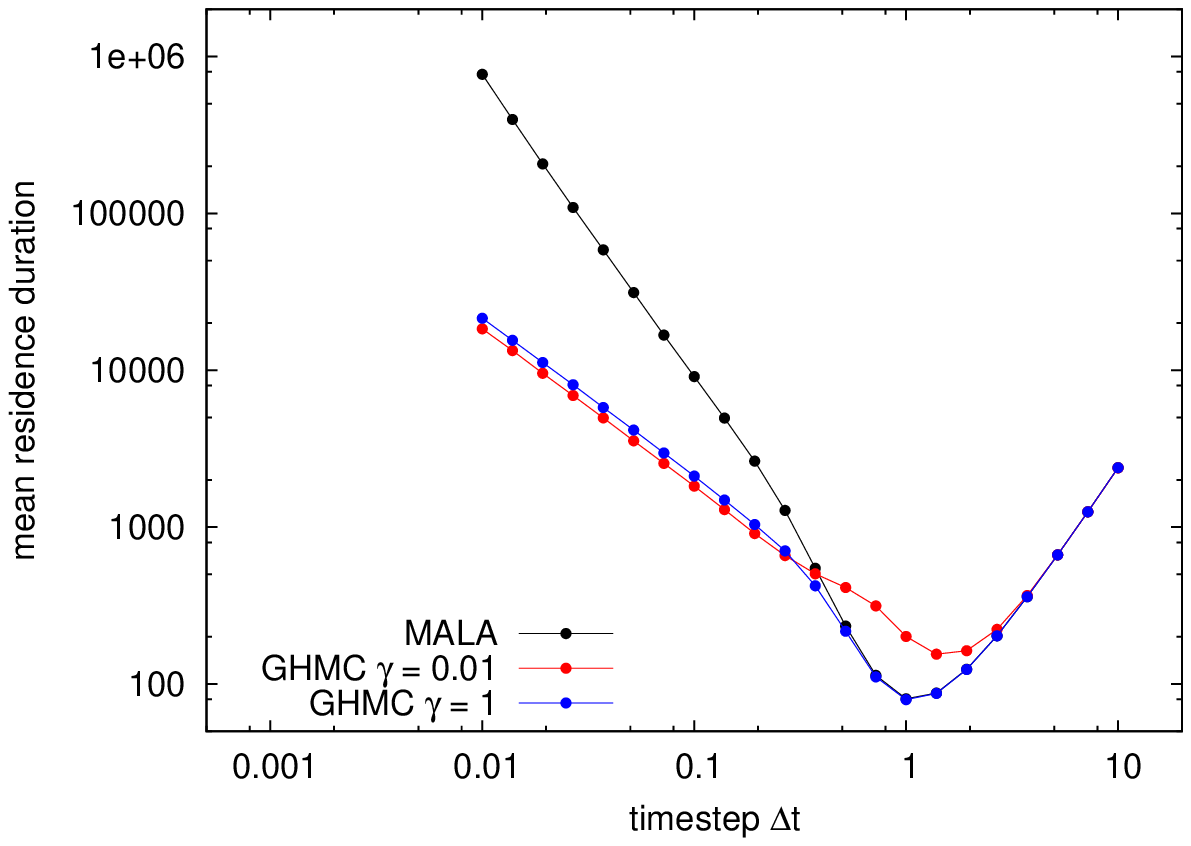}
  \includegraphics[width=0.48\textwidth]{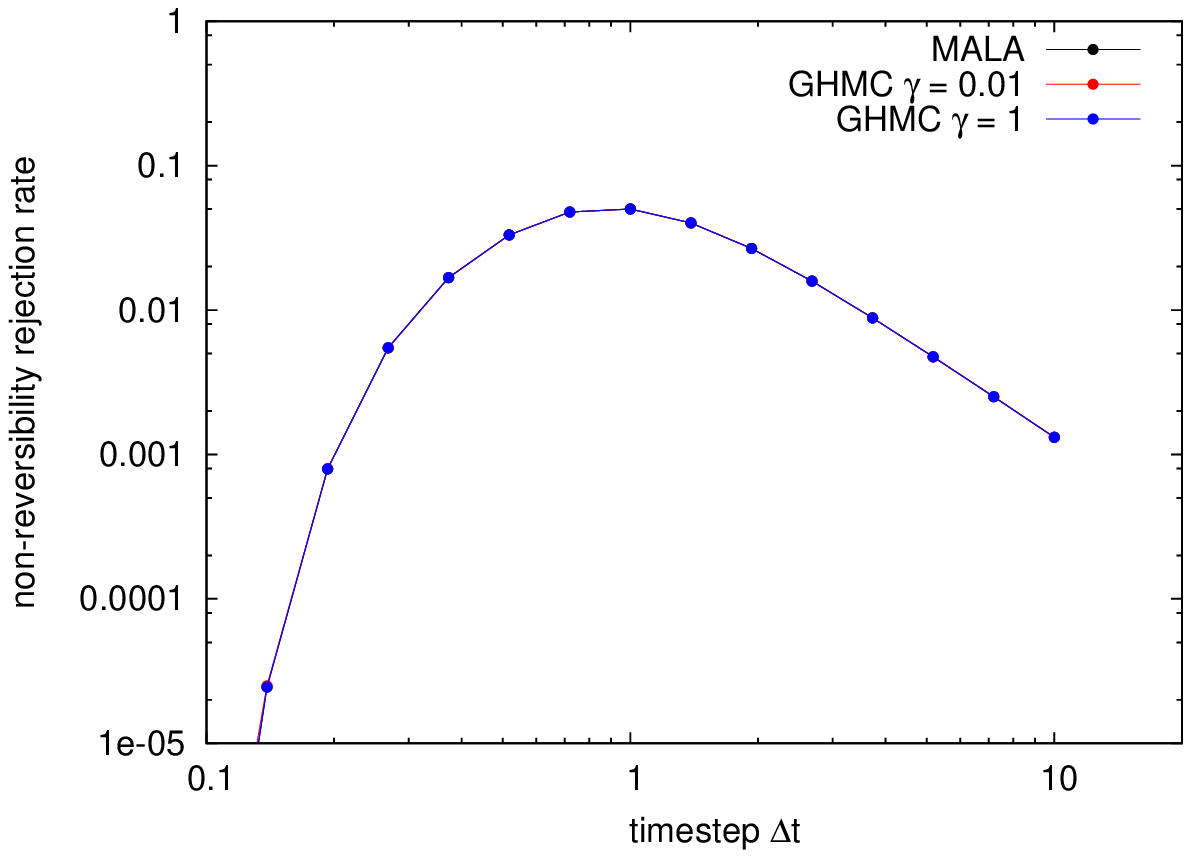}
  \caption{\label{fig:residence_V0} Same as in Figure~\ref{fig:residence} except that the forces are set to~0 in the RATTLE, for the proposal moves.}
\end{figure}

\paragraph{Acknowledgments.}
We thank Christian Robert for pointing out the reference~\cite{zappa-holmes-cerfon-goodman-17} as soon as it was preprinted on arXiv, as well as Jonathan Goodman and Miranda Cerfon--Holmes for very useful discussions. We also thank Shiva Darshan and Miranda Holmes--Cerfon for pointing out a typo in the Numerical Algorithm~\ref{algo:global} in this manuscript. This work  was funded in part by the European Research Council under the European Union's Seventh Framework Programme (FP/2007-2013) / ERC Grant Agreement number 614492, as well as the Agence Nationale de la Recherche, under grant ANR-14-CE23-0012 (COSMOS). We also benefited from the scientific environment of the Laboratoire International Associ\'e between the Centre National de la Recherche Scientifique and the University of Illinois at Urbana-Champaign. 


\end{document}